\newtheorem{thm}{Theorem}[section]
\newtheorem*{thm*}{Theorem}
\newtheorem{lemma}[thm]{Lemma}
\newtheorem{prop}[thm]{Proposition}
\newtheorem{cor}[thm]{Corollary}
\newtheorem*{cor*}{Corollary}
\theoremstyle{definition}
\newtheorem{defn}[thm]{Definition}
\theoremstyle{remark}
\newtheorem{remark}[thm]{Remark}
\newcommand {\La}    {\ensuremath{\mathcal{L}}}
\newcommand {\Fa}    {\ensuremath{\mbox{$\mathcal{F}$}}}
\newcommand {\mbl}   {\ensuremath{\mathbb{L}}}
\newcommand {\real}  {\ensuremath{\mathbb{R}}}
\newcommand {\intg}  {\ensuremath{\mathbb{Z}}}
\newcommand {\cplx}  {\ensuremath{\mathbb{C}}}
\newcommand {\rat}   {\ensuremath{\mathbb{Q}}}
\newcommand {\ch}    {\ensuremath{\operatorname{ch}}}
\newcommand {\ph}    {\ensuremath{\operatorname{ph}}}
\newcommand {\smlhf} {\ensuremath{\mbox{$\frac{1}{2}$}}}
\newcommand {\Smash} {\ensuremath{\wedge}}
\newcommand {\syml}  {\ensuremath{\mathbb{L}}}
\newcommand {\loc}   {\ensuremath{\operatorname{loc}}}
\newcommand {\pt}    {\ensuremath{\operatorname{pt}}}
\newcommand {\id}    {\ensuremath{\operatorname{id}}}
\newcommand {\BSO}   {\ensuremath{\operatorname{BSO}}}
\newcommand {\BO}   {\ensuremath{\operatorname{BO}}}
\newcommand {\BU}   {\ensuremath{\operatorname{BU}}}
\newcommand {\BSPL}   {\ensuremath{\operatorname{BSPL}}}
\newcommand {\BBSPL}   {\ensuremath{\operatorname{B}\widetilde{\operatorname{SPL}}}}
\newcommand {\MSO}   {\ensuremath{\operatorname{MSO}}}
\newcommand {\MSPL}   {\ensuremath{\operatorname{MSPL}}}
\newcommand {\RMSPL}   {\ensuremath{\widetilde{\operatorname{MSPL}}}}
\newcommand {\MSTOP}   {\ensuremath{\operatorname{MSTOP}}}
\newcommand {\SO}   {{\ensuremath{\operatorname{SO}}}}
\newcommand {\SPL}   {{\ensuremath{\operatorname{SPL}}}}
\newcommand {\PL}   {{\ensuremath{\operatorname{PL}}}}
\newcommand {\STOP}   {{\ensuremath{\operatorname{STOP}}}}
\newcommand {\Witt}   {{\ensuremath{\operatorname{Witt}}}}
\newcommand {\MWITT}   {\ensuremath{\operatorname{MWITT}}}
\newcommand {\Th}   {\ensuremath{\operatorname{Th}}}
\newcommand {\KO}   {{\ensuremath{\operatorname{KO}}}}
\newcommand {\K}   {{\ensuremath{\operatorname{K}}}}
\newcommand {\ko}   {{\ensuremath{\operatorname{ko}}}}
\newcommand {\ism}   {\ensuremath{\intg [\smlhf]}}
\newcommand {\proj}    {\ensuremath{\operatorname{proj}}}
\begin{document}


\title[Transfer and the Siegel-Sullivan Orientation]
  {Transfer and the Spectrum-Level Siegel-Sullivan KO-Orientation 
   for Singular Spaces}

\author{Markus Banagl}

\address{Mathematisches Institut, Universit\"at Heidelberg,
  Im Neuenheimer Feld 205, 69120 Heidelberg, Germany}

\email{banagl@mathi.uni-heidelberg.de}

\thanks{This work is funded by a research grant of the
 Deutsche Forschungsgemeinschaft (DFG, German Research Foundation)
 -- Projektnummer 495696766.}

\date{May 2023}

\subjclass[2020]{55N33, 55R12, 57N80, 57R20, 55N15, 19L41, 
                 57Q20, 57Q50}


\keywords{Stratified Spaces, Characteristic Classes, Orientation Classes,
Bundle Transfer, Gysin maps,
Intersection Homology, Bordism, KO-Homology, L-Theory}


\begin{abstract}
Integrally oriented normally nonsingular maps between singular spaces have
associated transfer homomorphisms on KO-homology at odd primes.
We prove that such transfers preserve Siegel-Sullivan orientations,
defined when the singular spaces are compact pseudomanifolds
satisfying the Witt condition, for example pure-dimensional
compact complex algebraic varieties. 
This holds for bundle transfers associated to block bundles with
manifold fibers as well as for Gysin restrictions associated to
normally nonsingular inclusions.
Our method is based on constructing a lift of the Siegel-Sullivan orientation
to a morphism of highly structured ring spectra which
factors through L-theory.
\end{abstract}

\maketitle


\tableofcontents


\section{Introduction}

Let $\KO_* (-)$ denote topological $\KO$-homology and
let $M$ be a smooth $n$-dimensional closed oriented manifold.
In his MIT notes \cite{sullivanmit},
Sullivan introduced a class
$\Delta_\SO (M) \in \KO_n (M) \otimes \ism$,
which is an orientation and
plays a fundamental role in studying the $\K$-theory
of manifolds.
For instance, Sullivan showed that topological block bundles
away from $2$ are characterized as
spherical fibrations together with a $\KO [\smlhf]$-orientation.
He went on to point out in \cite{sullivansinginspaces} 
that given a class of oriented
piecewise-linear (PL) pseudomanifolds equipped with a
bordism invariant signature that extends the signature of
manifolds and satisfies Novikov additivity and a product formula,
an analogous procedure (based on suitable transversality results
in the singular context) still works to determine a canonical
orientation in $\KO_* (-)\otimes \ism$.
Goresky and MacPherson's intersection homology allowed for
the construction of such signature invariants when
the pseudomanifolds have only strata of even codimension, or
more generally, if they satisfy the Witt condition introduced
by Siegel in \cite{siegel}.
An oriented PL pseudomanifold is a Witt space, if the middle-perversity,
middle-dimensional rational intersection homology of links of
odd-codimensional strata vanishes.
This class contains all complex algebraic varieties of pure dimension.
The class of Witt spaces is contained in a yet larger class of
pseudomanifolds, introduced in 
\cite{banagl-mem}, \cite{banagl-lcl}, 
that support a bordism invariant signature.
Roughly, these are spaces that admit a Lagrangian subsheaf
in the link cohomology sheaf along strata of odd codimension.
The present paper, however, will focus only on Witt spaces.

Thus Sullivan's general framework implies that
$n$-dimensional closed Witt spaces $X$ have a canonical orientation
$\Delta (X) \in \KO_* (X)\otimes \ism$.
The construction of this element was described in detail by
Siegel in \cite{siegel}, and is recalled in the present paper.
We shall refer to it as the \emph{Siegel-Sullivan orientation} of a
Witt space. In \cite{csw}, Cappell, Shaneson and Weinberger extended this orientation
to an equivariant class for finite group actions
satisfying a weak regularity condition on the fixed point sets.
Under the Pontrjagin character, $\Delta (X)$ is a lift of the Goresky-MacPherson
$L$-class $L_* (X) \in H_* (X;\rat)$.
The latter already contains significant global information on the singular
space $X$ (see \cite{cw2}, \cite{weinberger})
and its concrete computation is correspondingly challenging.
For complex projective varieties $X$ it is often possible to obtain information on $L_* (X)$
by cutting down to transverse intersections of $X$ with
smooth subvarieties using Gysin homomorphisms.
For example it is possible to reduce $L$-class computations for
singular Schubert varieties entirely to signature computations
of explicitly given \emph{algebraic} subvarieties.
This approach, introduced in \cite{banaglnyjm}
and pursued systematically in \cite{banaglwrazidlo}, 
requires a thorough understanding of how characteristic
and orientation classes for singular spaces transform under Gysin restriction.

Let $g: Y \hookrightarrow X$ be an oriented normally nonsingular 
codimension $c$ inclusion of
closed Witt spaces. Thus $Y$ has an open tubular neighborhood in $X$
which is endowed in a stratum preserving manner with the structure of 
an oriented rank $c$ vector bundle.
Since $\SO$-bundles are $\KO [\smlhf]$-oriented, $g$ has an
associated Gysin homomorphism
$g^!: \KO_* (X) \otimes \ism \to \KO_{*-c} (Y) \otimes \ism.$
We prove (Theorem \ref{thm.gysindeltaWitt}): 

\begin{thm*}
Let $g:Y^{n-c} \hookrightarrow X^n$ be an oriented normally nonsingular inclusion
of closed Witt spaces. The $\KO [\smlhf]$-homology Gysin map $g^!$ of $g$
sends the Siegel-Sullivan orientation of $X$ to the Siegel-Sullivan
orientation of $Y$:
\[ g^! \Delta (X) = \Delta (Y). \]
\end{thm*}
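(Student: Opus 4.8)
The plan is to realize both $\Delta(X)$ and $\Delta(Y)$ as images of Witt-bordism fundamental classes under the spectrum-level Siegel-Sullivan orientation and then to invoke the naturality of wrong-way maps. Recall from the construction of the spectrum-level orientation that there is a morphism of highly structured ring spectra $\Delta^{\MM}\colon\MWITT\to\KO[\smlhf]$ which factors as $\MWITT\to\syml\to\KO[\smlhf]$ through the symmetric $L$-theory spectrum $\syml$ of the integers, the second arrow being induced by the equivalence $\syml[\smlhf]\simeq\KO[\smlhf]$. Moreover, for every closed $n$-dimensional Witt space $Z$ the Siegel-Sullivan orientation $\Delta(Z)$ is the image of the tautological Witt-bordism fundamental class $[Z]=[\id_Z]\in\MWITT_n(Z)$ under $\Delta^{\MM}_*\colon\MWITT_n(Z)\to\KO_n(Z)\otimes\ism$. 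In particular, since the Siegel-Sullivan orientation restricts to Sullivan's orientation on smooth oriented manifolds, the composite $\MSO\to\MWITT\xrightarrow{\ \Delta^{\MM}\ }\KO[\smlhf]$ is Sullivan's $\KO[\smlhf]$-orientation of $\SO$.

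First I would set up the Gysin maps uniformly. The oriented normally nonsingular structure furnishes an open neighborhood $E\subseteq X$ of $Y$ carrying the structure of an oriented rank $c$ vector bundle $\pi\colon E\to Y$, and hence a collapse map $X_+\to X/(X\setminus Y)\simeq\Th(E)$. An oriented vector bundle carries an $\MSO$-Thom class, hence an induced Thom class over every $\MSO$-module spectrum, in particular over $\MWITT$ and over $\KO[\smlhf]$; combining these Thom isomorphisms with the collapse map defines the Gysin homomorphism $g^!$ in each of these two homology theories. Since $\Delta^{\MM}$ is a morphism of $\MSO$-module spectra which, by the previous paragraph, carries the $\MSO$-Thom class of $E$ to the $\KO[\smlhf]$-Thom class underlying Sullivan's class $\Delta_\SO(E)$, the map $\Delta^{\MM}_*$ intertwines the two Thom isomorphisms; and it is trivially compatible with the collapse map. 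Therefore the square
\[
\begin{array}{ccc}
\MWITT_n(X) & \xrightarrow{\ \Delta^{\MM}_*\ } & \KO_n(X)\otimes\ism\\
\downarrow & & \downarrow\\
\MWITT_{n-c}(Y) & \xrightarrow{\ \Delta^{\MM}_*\ } & \KO_{n-c}(Y)\otimes\ism
\end{array}
\]
with vertical maps the Gysin homomorphisms $g^!$ commutes, so $g^!\Delta(X)=g^!\Delta^{\MM}_*[X]=\Delta^{\MM}_*\bigl(g^![X]\bigr)$, and it remains to compute $g^![X]$ in Witt bordism.

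Next I would identify $g^![X]=[Y]$ geometrically. The $\MWITT$-theory Gysin map admits the usual transverse-intersection description: a class represented by a map $f\colon W\to X$ from a closed Witt space is sent to the class of the corestriction $f^{-1}(Y)\to Y$ after $f$ has been made transverse to the zero section inside $E$, using PL transversality in the stratified setting; the preimage $f^{-1}(Y)$ is again a closed Witt space of dimension $\dim W-c$, because transversality makes $W$ locally a product of $f^{-1}(Y)$ with $\real^c$ near any point of $f^{-1}(Y)$, so that the links of $f^{-1}(Y)$, and the codimensions of the strata carrying them, coincide with those of $W$ and Siegel's vanishing condition persists. Applied to the tautological cycle $f=\id_X$, which is transverse to $Y$ with $\id_X^{-1}(Y)=Y$ and corestriction $\id_Y$, this gives $g^![X]=[\id_Y]=[Y]\in\MWITT_{n-c}(Y)$. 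Combining with the previous step yields $g^!\Delta(X)=\Delta^{\MM}_*[Y]=\Delta(Y)$, which is the assertion.

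The main obstacle is the commutativity of the square above, that is, the claim that the spectrum-level orientation $\Delta^{\MM}$ respects the Thom isomorphisms of oriented vector bundles. This is not a formality: one must know that $\Delta^{\MM}$, as a morphism of structured ring spectra, sends the canonical $\MSO$- (equivalently $\MSTOP$-) Thom class of an oriented bundle precisely to the $\KO[\smlhf]$-Thom class governing Sullivan's orientation $\Delta_\SO$, and it is for this normalization that the factorization through symmetric $L$-theory is indispensable: the $\SO$-orientation of the $L$-theory spectrum is canonical, it refines the geometric Witt-theoretic Thom classes on the source side and, after inverting $2$, recovers Sullivan's $\KO[\smlhf]$-orientation on the target side. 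Once that identification is available, the remaining ingredients --- PL transversality for Witt spaces, the persistence of the Witt condition under transverse intersection with a normally nonsingular subspace, and the recognition of the fundamental classes $[X]$ and $[Y]$ --- are routine.
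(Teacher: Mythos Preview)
Your proof is correct but routes the argument through Witt bordism rather than through $\syml$-theory as the paper does. The paper's proof transports the problem along the equivalence $\kappa:\KO[\smlhf]\simeq\syml(\real)[\smlhf]$, verifies (Lemma~\ref{lem.kappacommgysin}, via Lemma~\ref{lem.thomcommkappa}) that $\kappa_*$ commutes with the Gysin maps, and then appeals to the external result $g^![X]_\syml=[Y]_\syml$ established in \cite[Thm.~3.17]{banaglnyjm}. You instead transport along $\Delta^{\MM}:\MWITT\to\KO[\smlhf]$, prove the analogous commutativity square by the same Thom-class mechanism, and then compute $g^![\id_X]=[\id_Y]$ directly in Witt bordism via the transverse-intersection description of the Gysin map. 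Your route is more self-contained---it does not lean on \cite{banaglnyjm}---and conceptually cleaner for this particular statement, but it does require one to know that the homotopically defined Gysin map $\Phi\circ j_*$ on $\MWITT$-homology agrees with geometric transverse pullback; this is the usual Pontrjagin--Thom identification (the $\MWITT$-Thom class of $\nu$ is represented by the zero section, and cap product realizes intersection), which you correctly flag as the substantive step but treat rather briskly. The paper's route trades that geometric verification for the algebraic work already packaged in \cite{banaglnyjm}. One small inaccuracy: the factorization of $\Delta^{\MM}$ is through $\syml(\rat)$, not $\syml(\intg)$, though after inverting $2$ this distinction evaporates.
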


An important class of morphisms in algebraic geometry is given
by local complete intersection morphisms (\cite{fultonintth}) $Y\to B$.
By definition, they admit a factorization $Y\to X\to B$, where
$Y\to X$ is a closed regular algebraic embedding and
$X\to B$ is a smooth morphism. The regular embedding has an associated
algebraic normal vector bundle and the smooth morphism has an associated
relative tangent bundle, so that l.c.i. morphisms possess a
virtual tangent bundle.
A parallel topological notion of normally nonsingular map $Y\to B$ has been 
considered by Goresky-MacPherson \cite[5.4.3]{gmih2} and 
Fulton-MacPherson \cite{fultonmacpherson}.
By definition, these admit factorizations $Y\to X\to B$
into a normally nonsingular inclusion $Y\to X$ followed by
a fiber bundle projection $X\to B$ with manifold fiber.
A complete picture should therefore include an understanding of
how the Siegel-Sullivan orientation behaves under Becker-Gottlieb type
bundle transfer (\cite{beckergottlieb}).
We shall thus also consider bundle transfers $\xi^!$ associated to
block bundles $\xi$ over compact Witt spaces $B$ (\cite{casson}).
These do not require a locally trivial projection map $X\to B$,
but merely a decomposition of $X$ into blocks over cells in $B$. 
(A fiber bundle is a special case of a block bundle.)
Thus, let $\xi$ be an oriented PL $F$-block bundle with closed oriented 
$d$-dimensional PL manifold fiber $F$ over a closed Witt base $B$.
Then, since the stable vertical normal block bundle of $\xi$
is $\KO [\smlhf]$-oriented,
there is a block bundle transfer
$\xi^!: \KO_n (B)\otimes \ism \to \KO_{n+d} (X)\otimes \ism.$
We prove (Theorem \ref{thm.transfdeltaWitt}):

\begin{thm*}
If $\xi$ is an oriented PL $F$-block bundle with closed oriented 
PL manifold fiber $F$ over a closed Witt base $B$,
then the Siegel-Sullivan orientations of base and total space $X$
are related under block bundle transfer by
\[ \xi^! \Delta (B) = \Delta (X). \]
\end{thm*}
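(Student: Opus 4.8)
The plan is to reduce $\xi^!\Delta(B)=\Delta(X)$ to an essentially tautological identity in Witt bordism and then transport it along the spectrum-level orientation constructed above. Recall the morphism of highly structured ring spectra
\[
  \Delta^\bullet\colon \MWITT \longrightarrow \KO[\smlhf],
\]
factoring as the symmetric signature of intersection chains $\MWITT\to\syml$ followed by the canonical ring equivalence $\syml[\smlhf]\xrightarrow{\simeq}\KO[\smlhf]$, and characterized by $\Delta(Z)=\Delta^\bullet_\ast[Z]$ for every closed Witt space $Z$, where $[Z]\in\MWITT_\ast(Z)$ is the fundamental class of $Z$ represented by $\id_Z$. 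Since $B$ is a closed Witt space and $F$ a closed PL manifold, the total space $X$ is again a closed Witt space --- a transverse slice in $X$ of the stratum lying over a stratum $S\subset B$ is a transverse slice of $S$ in $B$, so $X$ has the same links along odd-codimensional strata as $B$ --- whence $\Delta(X)$ is defined and equals $\Delta^\bullet_\ast[X]$ in $\KO_{n+d}(X)[\smlhf]$.

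Next I would lift $\xi^!$ to a Witt-bordism transfer $\xi^!_{\MWITT}\colon\MWITT_n(B)\to\MWITT_{n+d}(X)$. The stable vertical normal block bundle $\nu$ of $\xi$ --- the stable inverse of the oriented vertical tangent block bundle of $\xi$ --- is an \emph{oriented} stable PL block bundle; since $\MWITT$ is a module spectrum over the oriented PL bordism spectrum $\RMSPL$ (products of oriented PL manifolds with Witt spaces are Witt), the bundle $\nu$ acquires a canonical $\MWITT$-Thom class, and one defines $\xi^!_{\MWITT}$ by the same recipe as $\xi^!$: the Becker--Gottlieb collapse attached to an embedding of $X$ into a trivial block bundle over $B$ (whose normal block bundle stabilizes to $\nu$), followed by the $\MWITT$-Thom isomorphism for $\nu$. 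Under the Pontryagin--Thom identification of $\MWITT_\ast(-)$ with geometric Witt bordism, this transfer is pullback of block bundles, sending $[f\colon M\to B]$ to the total space of $f^\ast\xi$ with its structure map to $X$; in particular, pulling $\xi$ back along $\id_B$ returns $\xi$, so $\xi^!_{\MWITT}[B]=[X]$ in $\MWITT_{n+d}(X)$.

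The decisive step is that $\Delta^\bullet$ intertwines $\xi^!_{\MWITT}$ with $\xi^!$. Both transfers are built from \emph{one and the same} collapse map by applying a Thom isomorphism, so it is enough to check that $\Delta^\bullet$ sends the canonical $\MWITT$-Thom class of $\nu$ to the $\KO[\smlhf]$-Thom class used to define $\xi^!$. The latter is the Sullivan $\KO[\smlhf]$-orientation of the oriented PL block bundle $\nu$, so what is needed is that the Sullivan orientation $\RMSPL\to\KO[\smlhf]$ factors as $\RMSPL\to\MWITT\xrightarrow{\Delta^\bullet}\KO[\smlhf]$ --- equivalently, that $\Delta^\bullet$ is a morphism of $\RMSPL$-algebras. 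This is provided by the spectrum-level constructions above: for a PL manifold the intersection chain complex is the ordinary chain complex, so the symmetric-signature map $\MWITT\to\syml$ restricts on manifold bordism to the classical symmetric signature, whose composite with $\syml[\smlhf]\simeq\KO[\smlhf]$ is the Sullivan orientation. Granting this, naturality of transfers under the ring map $\Delta^\bullet$ gives
\[
  \xi^!\Delta(B)=\xi^!\,\Delta^\bullet_\ast[B]=\Delta^\bullet_\ast\,\xi^!_{\MWITT}[B]=\Delta^\bullet_\ast[X]=\Delta(X).
\]

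I expect the principal obstacle to lie in this last identification, carried out with the coherence the highly structured framework demands: one must verify that the $\KO[\smlhf]$-Thom class \emph{entering the definition} of the block bundle transfer is precisely the image under $\Delta^\bullet$ of the tautological $\MWITT$-Thom class --- i.e.\ that the classical passage from oriented PL block bundles to $\KO[\smlhf]$ and the passage through Witt bordism and symmetric $L$-theory agree as morphisms of ring spectra, not merely up to homotopy of underlying spectra. A secondary technical point is the comparison identifying the geometric ``pull back the block bundle'' transfer with the homotopy-theoretic Umkehr transfer on $\MWITT$-homology over the singular base $B$, which is what legitimizes the tautology $\xi^!_{\MWITT}[B]=[X]$. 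The companion statement for normally nonsingular inclusions (Theorem~\ref{thm.gysindeltaWitt}) can be established along the same lines, with the stable vertical normal block bundle of $\xi$ replaced by the normal block bundle of the inclusion $g$.
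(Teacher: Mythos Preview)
Your approach is correct in outline but takes a genuinely different route from the paper's own proof. The paper does \emph{not} lift the transfer to Witt bordism; instead it passes through $\syml$-theory via the equivalence $\kappa$ of Proposition~\ref{prop.einftyequivkol}. Concretely, the paper shows that $\kappa_*$ intertwines the $\KO[\smlhf]$- and $\syml[\smlhf]$-transfers (Lemma~\ref{lem.kappacommtransf}, using that $\kappa$ respects cap products and that $\kappa_*\Delta(\nu)=u_\syml(\nu)$), and then invokes the previously established identity $\xi^![B]_\syml=[X]_\syml$ from \cite[Theorem~7.1]{banaglbundletransfer}. The computation is $\kappa_*\xi^!\Delta(B)=\xi^!\kappa_*\Delta(B)=\xi^![B]_\syml=[X]_\syml=\kappa_*\Delta(X)$, and one concludes since $\kappa_*$ is an isomorphism.

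Your strategy instead lifts the transfer to $\MWITT$-homology and appeals to the geometric tautology $\xi^!_{\MWITT}[B]=[X]$. This is conceptually cleaner and avoids the detour through $\syml$, but the ``tautology'' is precisely where the substantive content hides: the identification of the homotopy-theoretic Umkehr map (Thom collapse followed by Thom isomorphism) with the geometric pull-back-the-block-bundle construction over a \emph{singular} base $B$ requires an argument of the same order of difficulty as the $\syml$-theoretic result the paper cites. You correctly flag this at the end, but you should be aware that it is not merely a ``secondary technical point'' --- it is the heart of the matter, and the paper's approach has the advantage of outsourcing exactly this step to a result already proved elsewhere. Your compatibility-of-Thom-classes argument (that $\Delta^\bullet$ takes the canonical $\MWITT$-Thom class of $\nu$ to the Sullivan $\KO[\smlhf]$-Thom class) is correct and is essentially the content of diagram~(\ref{equ.esigmaphifistauphiw}) combined with Definition~\ref{def.deltaofplmicrobundle} and Lemma~\ref{lem.kappadeltaisranickiu}.
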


Rather than using Siegel's original construction of $\Delta$ directly
to prove the above transfer results, we give a new description
of $\Delta$ based on a homotopy theoretic perspective relating
K- to L-theory:
We provide here a lift of the Siegel-Sullivan orientation to a
ring spectrum level morphism
\[ \Delta: \MWITT \longrightarrow \KO [\smlhf], \]
where $\MWITT$ denotes the ring spectrum representing Witt space bordism
theory, constructed as in \cite{blm} via the ad-theories of
Laures and McClure. A particularly important aspect of $\Delta$
for our present purposes is its multiplicativity.
On homotopy groups, $\Delta_*$ sends the
bordism class of a closed Witt space $X^{4k}$ to its signature $\sigma (X)$.
In order to obtain our ring spectrum level description of $\Delta$,
we use results of Land and Nikolaus \cite{landnikolaus} to construct
in Proposition \ref{prop.einftyequivkol} an equivalence of 
highly structured ring spectra
$\KO [\smlhf] \simeq \syml (\real) [\smlhf]$
which maps the element in $\pi_4 (\KO)[\smlhf]$ whose complexification
is the square of the complex Bott element to the signature $1$
element in $\pi_4 (\syml (\intg))$, where
$\syml (R)$ denotes the (projective) symmetric algebraic
L-theory spectrum of a ring $R$ with involution, introduced first by Ranicki.
Under this equivalence, the Siegel-Sullivan orientation $\Delta (X)$
of a Witt space corresponds to the $\syml (\rat)$-homology orientation
$[X]_\syml$ of Laures, McClure and the author, which generalizes
Ranicki's $\syml$-homology orientation of manifolds to singular spaces.
This then enables us to use $\syml$-theoretic transfer results established
in \cite{banaglnyjm} and \cite{banaglbundletransfer}.
For a PL $F$-fiber bundle $p:X\to B$ over a PL manifold base $B$,
the transfer formula 
$p^! [B]_\mathbb{L} = [X]_\mathbb{L} \in \syml (\intg)_{n+d} (X)$
was stated by L\"uck and Ranicki in \cite{lueckranicki}.

From the analytic viewpoint, Sullivan's orientation
$\Delta_\SO (M)$ is for a (closed, oriented) Riemannian manifold $M$ closely related 
to the class of the signature operator 
in Kasparov's model $\K_* (M) = \operatorname{KK}_* (C(M),\cplx)$
of the K-homology of $M$, see for example
\cite{rosenbergweinberger} and \cite[Prop. 8.3]{landnikolausschlichting}.
Modulo $2$-power torsion, the two classes differ by certain powers of $2$.
For smoothly stratified Witt spaces $X$ equipped with an
incomplete iterated edge metric on the regular part,
Albin, Leichtnam, Mazzeo and Piazza construct in \cite{almpsigpack} 
a signature operator
$\eth_{\operatorname{sign}}$ and a $\K$-homology class
$[\eth_{\operatorname{sign}}] \in \K_* (X)$.
Again, it is possible to go well beyond Witt spaces: The
topological cohomology theory of \cite{banagl-mem} and the analytic $L^2$ de Rham
theory have been treated from a common perspective in \cite{ablmp}.

In view of the algebraic results of \cite{banaglnyjm} and \cite{banaglwrazidlo},
the conclusions of the present paper are also relevant in the context of a
question raised by Jörg Schürmann in \cite{schuermannmsri}:
Is the Siegel-Sullivan orientation $\Delta (X)$ of a pure-dimensional
compact complex algebraic variety $X$ the image of
the intersection homology (mixed) Hodge module on $X$
under the motivic Hodge Chern class transformation
$\operatorname{MHC}_1: K_0 (\operatorname{MHM}(X)) 
\to K^{\operatorname{coh}}_0 (X)$ of Brasselet-Schürmann-Yokura
\cite{bsy}, followed by the $K$-theoretical Riemann-Roch transformation
of Baum, Fulton and MacPherson?

The above material is developed as follows:
Section \ref{sec.multidentkol} collects relevant homotopy theoretic information on
$\KO$ and $\syml$, and constructs the particular equivalence between
$\KO$ and $\syml$ away from $2$ used throughout the rest of the paper.
The orientations of Sullivan and Ranicki for the nonsingular case
are reviewed in Section \ref{sec.orientsullivanranicki}.
Section \ref{sec.classicalsiegelsullivan} sketches the classical construction of the
Siegel-Sullivan orientation for singular Witt spaces given in \cite{siegel}.
Our ring spectrum level construction of the Siegel-Sullivan orientation
is the focus of Section \ref{sec.ringspeclevsiegelsullivan}.
An immediate application of this construction is a proof
of cartesian multiplicativity of the Siegel-Sullivan orientation
(Theorem \ref{thm.siegelsullivcartesianmult}) in Section
\ref{sec.siegelsullivanfullmultiplicativity}.
In Section \ref{sec.bundletransfersiegelsullivan},
we proceed to apply our multiplicative spectrum level construction
in establishing the normally nonsingular block bundle transfer result,
while normally nonsingular Gysin restrictions of the Siegel-Sullivan
orientation are the subject of the final Section \ref{sec.gysinsiegelsullivan}.\\

\textbf{Acknowledgements.}
We thank Jörg Schürmann for interesting discussions on the subject
matter on this paper.
We especially express our gratitude to Markus Land for 
discussions on the relation of K- and L-theory away from $2$.

\section{Multiplicative Identification of KO- and L-theory away from $2$}
\label{sec.multidentkol}

Our method is based on finding a multiplicative equivalence
$\syml [\smlhf] \simeq \KO [\smlhf]$ that yields the Sullivan
orientation when precomposed with Ranicki's orientation 
$\MSPL \to \syml \to \syml [\smlhf]$. We describe such an equivalence  
in the present section, based on an equivalence of highly structured
ring spectra obtained by Land and Nikolaus in \cite{landnikolaus}.

Let $R$ be a commutative unital ring with involution
and let $\syml (R)$ denote the (projective) symmetric algebraic
L-theory spectrum of $R$, introduced first by Ranicki.
(See e.g. \cite{ranickialtm}; 
there is no need for our notation to distinguish between the
symmetric and the quadratic L-theory spectrum, since the latter will not be 
used in the present paper.)
The only instances of $R$ used in this paper are the ring of integers and the
fields of rational, real or complex numbers.
Since $\widetilde{K}_0 (R)$ vanishes in these cases, there is no difference
between the free and the projective L-theory.
The involution on $\intg, \rat$ and $\real$ is taken to be the
trivial involution, while we consider $\cplx$ to be endowed with the complex conjugation
involution.
If $R=\intg,$ we shall briefly write $\syml = \syml (\intg)$.
The spectrum $\syml (R)$ is a ring spectrum, and
a morphism $R\to S$ of commutative rings with involution induces 
a morphism $\syml (R) \to \syml (S)$. 
For the inclusions $\intg \subset \rat \subset \real \subset \cplx$, one
obtains morphisms
\[ \syml(\intg) \longrightarrow 
   \syml(\rat) \longrightarrow 
   \syml(\real) \longrightarrow 
   \syml(\cplx). \]
The multiplicative symmetric Poincar\'e 
ad-theory of Laures and McClure \cite{lauresmcclure} shows that
these are morphisms of ring spectra.
Their lax symmetric monoidal functor also shows that $\syml (R)$ 
for a commutative ring $R$ can be realized as a commutative
symmetric ring spectrum. Thus $\syml (R)$ is equivalent to
an $\mathbb{E}_\infty$-ring spectrum, that is, the multiplication is
commutative and associative not just up to homotopy, but up to 
coherent systems of homotopies.
If $E$ is a ring spectrum and $A$ a subring of $\rat$, then the localized
spectrum $E_A$ is endowed with a unique (up to ring equivalence) ring
structure such that the localization morphism $E \to E_A$ is a ring morphism.
Ring morphisms $E\to F$ localize to ring morphisms $E_A \to F_A$.
If $A=\intg [\smlhf],$ we will write $E[\smlhf]$ for $E_A$.
Thus there are canonical morphisms of ring spectra
\[ \syml(\intg)[\smlhf] \longrightarrow 
   \syml(\rat)[\smlhf] \longrightarrow 
   \syml(\real)[\smlhf] \longrightarrow 
   \syml(\cplx)[\smlhf]. \]
The first two morphisms are equivalences, the last one is not.   
Indeed, the homotopy ring of $\syml (\intg)[\smlhf]$ is given by
$\syml_* (\intg)[\smlhf] = \intg [\smlhf] [x^{\pm 1}],$
where $x\in \syml_4 (\intg)$ is the signature $1$ element.
(The degree $1$ torsion element in $\syml_* (\intg)$ is removed by
inverting $2$.)
The homotopy ring of $\syml (\real)$ is given by
$\syml_* (\real) = \intg [x^{\pm 1}],$
where $x\in \syml_4 (\real)$ denotes the image of the class
$x\in \syml_4 (\intg)$.
The homotopy groups $\syml_i (\rat)$ vanish in degrees $i$ not
divisible by $4$. For $i=4k,$ they contain an infinitely generated
amount of $2$-primary torsion,
\[ \syml_{4k} (\rat) = \syml_{4k} (\real) \oplus \bigoplus_{p \text{ prime}}
    \syml_{4k} (\mathbb{F}_p) =
    \intg \oplus (\intg/_2)^\infty \oplus (\intg/_4)^\infty, \]
where $\mathbb{F}_p$ denotes the finite field with $p$ elements.
The signature homomorphism $\syml_{4k} (\rat)\to \intg$ provides the
unique splitting for the unique ring homomorphism
$\intg \to \syml_{4k} (\rat)$.     
The infinitely generated amount of $2$-primary torsion is then removed
by inverting $2$.    
Via the above canonical multiplicative maps, we shall identify
$\syml(\intg)[\smlhf],$
$\syml(\rat)[\smlhf]$ and 
$\syml(\real)[\smlhf]$ as $\mathbb{E}_\infty$-ring spectra.
The homotopy ring of $\syml (\cplx)$ (with conjugation involution on $\cplx$) 
is $\syml_* (\cplx) = \intg [b^{\pm 1}],$
where $b$ has degree $2$.
The spectrum $\syml (\cplx)$ is $2$-periodic.
On homotopy rings, the map $\syml (\real) \to \syml (\cplx)$
induces the map $\intg [x^{\pm 1}] \to \intg [b^{\pm 1}],$ $x\mapsto b^2$.

Let $\KO$ denote the $8$-periodic ring spectrum representing real K-theory
and $\K$ the $2$-periodic ring spectrum representing complex K-theory.
The homotopy ring of $\K$ is $\pi_* (\K)=\intg [\beta^{\pm 1}],$
where $\beta$ is the complex Bott element in degree $2$, i.e.
$\beta$ is represented by the reduced canonical complex line bundle
$H-1 \in \widetilde{\K}^0 (S^2)$.
The complexification $c: \BO \to \BU$ can be lifted to a morphism of
spectra $c:\KO \to \K$ (\cite[p. 360, Lemma VI.3.3]{rudyak}).
On $\pi_4$, $c$ induces multiplication by $2$,
$c_* =2: \pi_4 (\KO) = \intg \to \intg = \pi_4 (\K)$.
Thus there does not exist an element in $\pi_4 (\KO)$ that maps to
$\beta^2$. But after inverting $2$, such an element exists.
Let $a\in \pi_4 (\KO)[\smlhf]$ be the element whose complexification
is $\beta^2$.
The localization $\KO [\smlhf]$ is a $4$-periodic ring spectrum
with homotopy ring
$\pi_* (\KO)[\smlhf] =\intg [\smlhf] [a^{\pm 1}].$

Taylor and Williams showed in \cite[Theorem A]{taylorwilliams} that there is
an equivalence
$\syml (\intg) [\smlhf] \simeq \KO [\smlhf]$ of spectra.
In \cite{rosenberg}, Rosenberg asserts that these spectra are 
equivalent as homotopy ring spectra.
Further arguments in this direction are supplied by Lurie
\cite{lurie}, who proves that these spectra are quivalent as homotopy ring
spectra. Land and Nikolaus \cite[p. 550]{landnikolaus}
construct an equivalence of $\mathbb{E}_\infty$-ring spectra
$\KO [\smlhf] \simeq \syml (\intg) [\smlhf].$

\begin{prop} \label{prop.einftyequivkol}
There exists an equivalence of $\mathbb{E}_\infty$-ring spectra
\[ \kappa: \KO [\smlhf] \stackrel{\simeq}{\longrightarrow} 
   \syml (\real) [\smlhf] \]
which induces the ring isomorphism
\[ \intg [\smlhf] [a^{\pm 1}] \longrightarrow
   \intg [\smlhf] [x^{\pm 1}],~
   a \mapsto x \]
on homotopy rings.   
\end{prop}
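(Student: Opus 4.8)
The plan is to produce $\kappa$ by transporting the Land--Nikolaus $\mathbb{E}_\infty$-equivalence $\KO[\smlhf]\simeq\syml(\intg)[\smlhf]$ across the chain $\syml(\intg)[\smlhf]\to\syml(\rat)[\smlhf]\to\syml(\real)[\smlhf]$ and then checking its effect on $\pi_4$. First I would record that the maps in this chain are morphisms of ring spectra --- by the multiplicative symmetric Poincar\'e ad-theory of Laures--McClure --- and that they are equivalences: inverting $2$ kills the degree-$1$ torsion element of $\syml_*(\intg)$ and the infinitely generated $2$-primary torsion by which $\syml_*(\rat)$ differs from $\syml_*(\real)$, so on homotopy groups all three localized spectra become $\intg[\smlhf][x^{\pm 1}]$ with $x$ the signature-$1$ element. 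Since the symmetric ring spectra $\syml(R)$ are commutative, these are equivalences of $\mathbb{E}_\infty$-ring spectra (this is the identification $\syml(\intg)[\smlhf]\simeq\syml(\rat)[\smlhf]\simeq\syml(\real)[\smlhf]$ already fixed above). Composing with the Land--Nikolaus $\mathbb{E}_\infty$-equivalence yields an $\mathbb{E}_\infty$-ring equivalence $\kappa\colon\KO[\smlhf]\xrightarrow{\simeq}\syml(\real)[\smlhf]$.

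Next I would pin down $\kappa$ on homotopy rings. As a unital ring map $\kappa$ is the identity on $\pi_0=\intg[\smlhf]$, and as an equivalence it must carry the $\intg[\smlhf]$-module generator $a$ of $\pi_4(\KO)[\smlhf]$ to a generator of $\syml_4(\real)[\smlhf]=\intg[\smlhf]\cdot x$, hence to $ux$ for a unit $u\in\intg[\smlhf]^\times=\{\pm 2^k\}$; the remaining powers of $a$ and $x$ are then forced by multiplicativity and $4$-periodicity. The crux is to show $u=1$. For this I would invoke the precise normalization of the underlying homotopy-ring equivalence $\KO[\smlhf]\simeq\syml(\intg)[\smlhf]$ of Taylor--Williams (as refined by Rosenberg and Lurie, and as used in Land--Nikolaus): it realizes the isomorphism $\pi_*(\KO)[\smlhf]\cong\syml_*(\intg)[\smlhf]$ sending the signature-$1$ element to $a$ --- this is exactly the matching of the $\KO$-Bott generator with signature that underlies Sullivan's theorem, reflecting $c_*=2$ on $\pi_4$ together with Bott periodicity. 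Since $\syml_4(\intg)\to\syml_4(\real)$ carries the signature-$1$ element to the signature-$1$ element, the composite $\kappa$ then has the asserted effect $a\mapsto x$. If one only controls $u$ up to a square, a factor $4^{j}$ can moreover be absorbed by post-composing with the appropriate power of the stable Adams operation $\psi^{2}$, which becomes an $\mathbb{E}_\infty$-automorphism of $\KO[\smlhf]$ after inverting $2$ and scales $a$ by $4$.

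The main obstacle is precisely this normalization on $\pi_4$: existence of \emph{some} $\mathbb{E}_\infty$-equivalence $\KO[\smlhf]\simeq\syml(\real)[\smlhf]$ is immediate from Land--Nikolaus, but such an equivalence is a priori determined on $\pi_4$ only up to a unit of $\intg[\smlhf]$, and the $\mathbb{E}_\infty$-automorphisms of $\KO[\smlhf]$ realize only the squares $4^{j}$ of such units on $\pi_4$. Hence one genuinely has to extract the sharp $\pi_4$-statement from the cited homotopy-level identifications --- ultimately the classical signature/Bott fact behind the Sullivan orientation --- rather than argue it softly from the $\mathbb{E}_\infty$-structure alone.
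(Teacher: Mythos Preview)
Your overall strategy matches the paper's: start from the Land--Nikolaus $\mathbb{E}_\infty$-equivalence and, if necessary, correct by a stable Adams operation. But the decisive step --- determining what the Land--Nikolaus map actually does on $\pi_4$ --- is not carried out in your proposal, and your assertion about it is in fact wrong. You claim that the equivalence ``realizes the isomorphism \ldots\ sending the signature-$1$ element to $a$''; it does not. The Land--Nikolaus equivalence $\tau_\real$ sends $a$ to $4x$, not to $x$. Your appeal to Taylor--Williams, Rosenberg, and Lurie does not help here: those references provide homotopy-ring equivalences, but you need the $\pi_4$-value of the \emph{specific} $\mathbb{E}_\infty$-map $\tau_\real$, and that requires an honest computation.

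The paper performs this computation by passing to the complex side. Land--Nikolaus supply a commutative square relating $\tau_\real$ to a complex analogue $\tau_\cplx\colon \K[\smlhf]\to\syml(\cplx)[\smlhf]$ via complexification $c$ and $\syml(\real)\to\syml(\cplx)$. Their Lemma~4.9 gives the precise value $\tau_{\cplx*}(\beta)=2b$ on $\pi_2$, hence $\tau_{\cplx*}(\beta^2)=4b^2$ by multiplicativity. Since $c_*(a)=\beta^2$ and $x\mapsto b^2$ under $\syml(\real)\to\syml(\cplx)$, the square forces $\tau_{\real*}(a)=4x$. Only then does the paper precompose with $(\psi^2)^{-1}$ to obtain $\kappa$ with $\kappa_*(a)=x$.

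Your backup plan (absorb a factor $4^j$ by an Adams operation) is exactly what the paper does, but it presupposes that the unit $u$ is a power of $4$. You have not established this: a priori $u\in\{\pm 2^k\}$, and powers of $\psi^2$ only realize the subgroup $\{4^j\}$ on $\pi_4$, so a residual sign or an odd power of $2$ would be uncorrectable by your method. The complex computation is what pins $u$ down to $4$ and makes the Adams correction legitimate.
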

\begin{proof}
We are indebted to Markus Land for communication on the following argument.
Let $\tau_\real: \KO [\smlhf] \to \syml \real [\smlhf]$
be the equivalence of $\mathbb{E}_\infty$-ring spectra
given in \cite[Cor. 5.4]{landnikolaus}.
This equivalence is related to a complex version
$\tau_\cplx: \K [\smlhf] \to \syml \cplx [\smlhf]$
by the commutative diagram 
\[ \xymatrix{
\KO [\smlhf] \ar[d]_c \ar[r]^{\tau_\real} & \syml (\real)[\smlhf] \ar[d] \\
\K [\smlhf] \ar[r]_{\tau_\cplx} & \syml (\cplx)[\smlhf].
} \]
(There is no integral version of $\tau_\cplx$ on the periodic
spectra, although there \emph{is} an integral version 
$\operatorname{k} \to \ell \cplx$ on connective spectra, which induces $\tau_\cplx$
on the periodic spectra after inverting $2$.)
By \cite[Lemma 4.9]{landnikolaus},
$\tau_{\cplx *}: \pi_2 (\K)[\smlhf] \to \pi_2 (\syml \cplx)[\smlhf]$
maps $\beta \mapsto 2b$.
Since $\tau_\cplx$ is a map of $\mathbb{E}_\infty$-ring spectra, it
follows that it sends $\beta^2 \mapsto 4b^2$.
Consequently, the right hand vertical map sends the element
$\tau_{\real *} (a)$ to
\[ \tau_{\cplx *} c_* (a) = \tau_{\cplx *} (\beta^2) = 4b^2. \]
Since $\syml (\real)[\smlhf] \to \syml (\cplx)[\smlhf]$
maps $x \mapsto b^2$,
we deduce that $\tau_{\real *} (a) = 4x$.

Let 
\[ \psi^2: \KO [\smlhf] \longrightarrow \KO [\smlhf] \]
be the stable Adams operation, constructed as a morphism of 
$\mathbb{E}_\infty$-ring spectra
(\cite{daviesstructuresheaf},
\cite[p. 3]{daviesadamsopstmf},
\cite[p. 106]{mayeinfinityring}).
On the homotopy groups $\pi_{4k} \KO [\smlhf] = \intg [\smlhf],$
$\psi^2$ induces $a^k \mapsto 4^k a^k$.
Thus $\psi^2$ induces an isomorphism of homotopy rings,
and is therefore an equivalence.
Composing the inverse of $\psi^2$ with the Land-Nikolaus
equivalence
$\tau_\real: \KO [\smlhf] \to \syml \real [\smlhf]$,
we obtain the desired equivalence $\kappa$ of $\mathbb{E}_\infty$-ring spectra
since the map induced by the composition on $\pi_4$ sends
\[ a \mapsto \mbox{$\frac{1}{4}$} a \stackrel{\tau_{\real *}}{\mapsto} 
    \mbox{$\frac{1}{4}$} (4x)=x. \]
\end{proof}

\section{The Orientations of Sullivan and Ranicki}
\label{sec.orientsullivanranicki}

Let $\MSO, \MSPL$ and $\MSTOP$ denote the Thom spectra
of oriented vector-, PL- and topological bundles.
These are ring spectra and Pontrjagin-Thom isomorphisms
identify their homotopy groups with the bordism groups 
$\Omega^\SO_*, \Omega^\SPL_*, \Omega^\STOP_*$ of
oriented smooth, PL or topological manifolds.
(The topological spectrum $\MSTOP$ will not play an essential
role in what follows, but occasional side remarks will involve it.)
Sullivan obtained in \cite{sullivanmit} a morphism of spectra
\[ \Delta_\SO: \MSO \longrightarrow \KO [\smlhf] \]
such that the induced map on homotopy groups
\[ \Delta_{\SO *}: \Omega^\SO_{4k} = \MSO_{4k} \longrightarrow
   \KO [\smlhf]_{4k} = \intg [\smlhf] \langle a^k \rangle \]
is
\begin{equation} \label{equ.sullorientoncoeffs} 
\Delta_{\SO *} [M^{4k}] = \sigma (M)\cdot a^k, 
\end{equation}
where $\sigma (M)$ denotes the signature of the 
smooth oriented closed manifold $M$, see \cite[pp. 83--85]{madsenmilgram}.   
The Pontrjagin character
$\ph: \KO [\smlhf] \to H\rat [t^{\pm 1}]$
of $\Delta_\SO$ is the inverse of the universal Hirzebruch
$L$-class $L\in H^* (\BSO;\rat)$ up to multiplication with the
stable Thom class $u\in H^0 (\MSO;\intg)$,
\[ \ph (\Delta_\SO) = L^{-1} \cup u \in H^* (\MSO;\rat). \]
The \emph{Sullivan orientation} of a smooth closed $n$-dimensional manifold
$M$ is given by the image
\[ \Delta_\SO (M) = \Delta_{\SO *} [\id_M] \in \KO_n (M)\otimes \intg [\smlhf]  \]
of the bordism class of the identity on $M$ under the
homomorphism
$\Delta_{\SO *}: \Omega^\SO_n (M) \to (\KO [\smlhf])_n (M)$
induced by the spectrum level Sullivan orientation $\Delta_\SO$.
If $M$ has a boundary $\partial M$, then $\Delta_\SO (M)$ is an element in
the relative group $\KO_n (M,\partial M)\otimes \intg [\smlhf]$.

The orientation $\Delta_\SO$ extends canonically to a map
$\Delta_\SPL: \MSPL \to \KO [\smlhf]$ with respect to
the canonical map $\MSO \to \MSPL$ (\cite[Chapter 5.A, D]{madsenmilgram}).
On homotopy groups, the induced map continues to be given by the
signature, i.e. 
\begin{equation} \label{equ.sullorientonplcoeffs} 
\Delta_{\SPL *} [M^{4k}] = \sigma (M)\cdot a^k 
\end{equation}
for an oriented closed PL manifold $M$.
The Pontrjagin character is
\begin{equation} \label{equ.phdeltaspl} 
\ph (\Delta_\SPL) = L^{-1}_\PL \cup u_\PL \in H^* (\MSPL;\rat), 
\end{equation}
where $L_\PL$ is the universal PL $L$-class
$L_\PL \in H^* (\BSPL;\rat)$ and $u_\PL$ the
stable Thom class $u_\PL \in H^0 (\MSPL;\intg)=\intg$
(\cite[Cor. 5.4, p. 102]{madsenmilgram}).
Note that $L_\PL$ restricts to $L$ under the canonical map $\BSO \to \BSPL$.
We recall that this map is a rational equivalence, 
so in particular induces an isomorphism
$H^* (\BSPL;\rat) \stackrel{\simeq}{\longrightarrow}
H^* (\BSO;\rat)$, and this isomorphism identifies Thom's Pontrjagin class
$p_{4i} \in H^{4i} (\BSPL;\rat)$ with the rational reduction 
$p_{4i} \in H^{4i} (\BSO;\rat)$ of the
integral Pontrjagin class  (though the
PL Pontrjagin classes are in general not integral).
For this reason, one commonly identifies $L$ and $L_\PL$ and simply writes
$L$ for it.
An $n$-dimensional closed PL manifold has a
Sullivan orientation
\[ \Delta_\SPL (M) = \Delta_{\SPL *} [\id_M] \in \KO_n (M)\otimes \intg [\smlhf],  \]
where $[\id_M] \in \Omega^\SPL_n (M)$.

\begin{remark}
Randal-Williams observes in \cite[p. 9]{randalfamilysignature} that
this map can be further canonically extended to a map
$\Delta_\STOP: \MSTOP \to \KO [\smlhf]$ with respect to
the canonical forget map $\MSPL \to \MSTOP$, since the fiber
of the latter map is $2$-local.
In particular, given an element $[N] \in \Omega^\STOP_{4k} = \MSTOP_{4k},$
there exists a large integer $i$ such that $2^i N$ is
topologically bordant to a PL manifold $M$, for which
(\ref{equ.sullorientonplcoeffs}) is available. Thus
 \begin{align*}
2^i \Delta_{\STOP *} [N]
&= \Delta_{\STOP *} [M]
   = \Delta_{\SPL *} [M]
    = \sigma (M) \cdot a^k \\
&=  \sigma (2^i N) \cdot a^k
   = 2^i \sigma (N) \cdot a^k.
\end{align*}
This shows that the map induced by
$\Delta_\STOP$ on homotopy groups is again given by the
signature:
\[
\Delta_{\STOP *} [M^{4k}] = \sigma (M)\cdot a^k 
\]
for an oriented closed topological manifold $M$.
\end{remark}

In the present paper, we take an $\syml$-theoretic perspective in order to
approach the orientations of Sullivan and Siegel-Sullivan.
For smooth manifolds $M$, it has been known for a long time that
Ranicki's fundamental class $[M]_\syml$ agrees with Sullivan's
class $\Delta_\SO (M)$
under the appropriate identification of $\KO$- and $\syml$-homology
at odd primes, see for example
Ranicki \cite[p. 15]{ranickialtm}, and 
Weinberger \cite[p. 82]{weinberger}.
Let $L^n (R)$ denote Ranicki's (projective) symmetric $L$-groups of a 
ring $R$ with involution.
In \cite[p. 385, Prop. 15.8]{ranickiats}, Ranicki constructed a morphism of ring spectra
\[ \sigma^*: \MSPL \longrightarrow \syml (\intg) \]
such that the resulting $\syml (\intg)$-homology fundamental class
\[ [M]_\syml := \sigma^* [\id_M] \in \syml (\intg)_n (M)  \]
of a closed oriented PL $n$-dimensional manifold $M$
hits the Mishchenko-Ranicki symmetric signature
\[ \sigma^* (M) = A[M]_\syml \in L^n (\intg [\pi_1 M]) \]
under the assembly map
\[ A: \syml (\intg)_n (M) \longrightarrow L^n (\intg [\pi_1 M]). \]
 (Ranicki extended $\sigma^*$ to a morphism of ring spectra
$\MSTOP \longrightarrow \syml (\intg)$ in \cite[p. 290]{ranickitotsurgob}, 
but we shall not require this extension for the purposes of the present paper.)
Technically, we will work with $\sigma^*$ as constructed by
Laures and McClure in \cite{lauresmcclure} using ad-theories.
Their incarnation of $\sigma^*$ is an $\mathbb{E}_\infty$-ring map
(\cite[1.4]{lauresmcclure}).
The localization morphism $\syml (\intg) \to \syml (\intg)[\smlhf]$
is a morphism of ring spectra. Thus its composition with $\sigma^*$
is a morphism of ring spectra
$\MSPL \to \syml (\intg)[\smlhf]$, which we shall also denote by $\sigma^*$.
By \cite[p. 243]{ranickiatsii}, $\sigma^*$
induces on homotopy groups the map
\[ \sigma^*_{\pt}: \Omega^\SPL_{4k} (\pt) = \MSPL_{4k} (\pt) \longrightarrow
   \syml (\intg) [\smlhf]_{4k} = \intg [\smlhf] \langle x^k \rangle \]
given by
\begin{equation} \label{equ.ranorientoncoeffs}
\sigma^*_{\pt} [M^{4k}] = \sigma (M)\cdot x^k. 
\end{equation}

\begin{remark} \label{rem.symmsigntoordsignformfds}
Let $M$ be a closed oriented PL manifold of dimension $n$.
The constant map $c:M\to \pt$ induces a diagram
\[ \xymatrix{
\Omega^\SPL_n (M) \ar[r]^{\sigma^*} \ar[d]_{c_*} &
 \syml (\intg)_n (M) \ar[r]^A \ar[d]_{c_*} &
   L^n (\intg[\pi_1 M]) \ar[d]_{c_*} \\
\Omega^\SPL_n (\pt) \ar[r]_{\sigma^*_{\pt}} &
 \syml (\intg)_n (\pt) \ar@{=}[r]_A &
   L^n (\intg)   
} \]
which commutes, since the assembly map $A$ is natural.
Together with Equation (\ref{equ.ranorientoncoeffs}),
this diagram shows that the homomorphism 
$L^n (\intg[\pi_1 M]) \to L^n (\intg),$ $n=4k,$
sends the symmetric signature of $M$ to its ordinary signature.
Indeed,
\begin{align*}
c_* \sigma^* (M)
&= c_* A [M]_\syml = c_* A \sigma^* [\id_M] \\
&= \sigma^*_{\pt} c_* [\id_M] = \sigma^*_{\pt} [M]
  = \sigma (M) x^k.
\end{align*}
The analogous fact holds also for singular pseudomanifolds that satisfy the
Witt condition and will be used later to compute the behavior of the 
ring-spectrum level Siegel-Sullivan orientation defined in the present
paper on homotopy groups
(Proposition \ref{prop.speclevelsiegelsullonhtpygrps}).
\end{remark}

Ranicki  \cite[p. 390f]{ranickiats} introduced an $\syml$-theoretic Thom class 
\[ u_\syml (\alpha) \in \widetilde{\syml}^m (\Th (\alpha)) \]
for oriented rank $m$ PL microbundles
(or PL $(\real^m,0)$-bundles) $\alpha$ as follows:
The classifying map $X\to \BSPL_m$ of $\alpha$
(where $X$ is the base space)
is covered by a bundle map from $\alpha$ to the
universal oriented PL microbundle.
The induced map on Thom spaces yields a class 
\[ u_\SPL (\alpha) \in \RMSPL^m (\Th (\alpha)), \]
the \emph{Thom class} of $\alpha$ in oriented PL cobordism.
It is indeed an $\MSPL$-orientation of $\alpha$
in Dold's sense.
Ranicki then defines
\[ u_\syml (\alpha) := \sigma^* (u_\SPL (\alpha)). \]
Since $\sigma^*: \MSPL \to \syml (\intg)$ is multiplicative,
the element $u_\syml (\alpha)$ is indeed an $\syml$-orientation of
$\alpha$.
This can also be carried out for stable PL bundles $\alpha$.
For the universal stable PL bundle there is thus a canonical
$\syml$-orientation $u_\syml \in \widetilde{\syml}^0 (\MSPL)$.
Since the stable Thom class 
$u_\SPL \in \RMSPL^0 (\MSPL)$ 
of the universal stable PL bundle is given by the identity
$\MSPL \to \MSPL,$ we have $u_\syml = \sigma^*$.
The morphism of ring spectra 
$\syml (\intg)\to \syml (\rat)$ induces a homomorphism
\[ \widetilde{\syml (\intg)}^m (\Th (\alpha)) \longrightarrow 
   \widetilde{\syml (\rat)}^m (\Th (\alpha)). \]
We denote the image of $u_\syml (\alpha)$ under this map again by
$u_\syml (\alpha).$ Furthermore,
the images of these elements in the bottom row of the localization square
\[ \xymatrix{
(\widetilde{\syml \intg})^m (\Th (\alpha)) \ar[r] \ar[d] &
   (\widetilde{\syml \rat})^m (\Th (\alpha)) \ar[d] \\
(\widetilde{\syml \intg [\smlhf]})^m (\Th (\alpha)) \ar@{=}[r] & 
   (\widetilde{\syml \rat [\smlhf]})^m (\Th (\alpha))   
} \]
will also be written as $u_\syml (\alpha).$
Since all maps in the square are induced by morphisms of ring spectra,
all these image elements are again orientations of $\alpha$.

Comparing Equations (\ref{equ.sullorientonplcoeffs}) 
and (\ref{equ.ranorientoncoeffs}), we find that the diagram
\[ \xymatrix{
\MSPL_{4k} (\pt) \ar[rd]_{\Delta_{\SPL *}} \ar[r]^{\sigma^*_{\pt}} 
  & \syml (\intg)[\smlhf]_{4k} \\
 & \KO [\smlhf]_{4k} \ar[u]_\simeq^{\kappa}
} \]
commutes. More is true:
\begin{prop} \label{prop.kappadeltasplisranickior}
The composition 
\[ \MSPL \stackrel{\Delta_\SPL}{\longrightarrow}
   \KO [\smlhf] \stackrel{\kappa}{\longrightarrow}
   \syml (\intg)[\smlhf] \]
of Sullivan's orientation $\Delta_\SPL$ with
the ring equivalence $\kappa$ from Proposition
\ref{prop.einftyequivkol} is homotopic to Ranicki's orientation $\sigma^*$.
\end{prop}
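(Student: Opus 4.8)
The plan is to reduce the statement --- that two maps of spectra $\MSPL \to \syml(\intg)[\smlhf]$ agree up to homotopy --- to a computation on homotopy groups, using the fact that $\MSPL$ is particularly well-behaved rationally and $2$-locally after inverting $2$. The key structural input is that both $\kappa \circ \Delta_\SPL$ and $\sigma^*$ are morphisms of ring spectra (indeed, both $\kappa$ and $\sigma^*$ are $\mathbb{E}_\infty$ by Proposition \ref{prop.einftyequivkol} and the Laures--McClure construction, and $\Delta_\SPL$ is a ring map by the Madsen--Milgram account), so the difference is controlled by the structure of $[\MSPL, \syml(\intg)[\smlhf]]$ as a module over $\pi_* \syml(\intg)[\smlhf] = \intg[\smlhf][x^{\pm 1}]$. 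First I would record that the target $\syml(\intg)[\smlhf] \simeq \KO[\smlhf]$ is, after inverting $2$, a product of Eilenberg--MacLane spectra: indeed $\KO[\smlhf]$ splits as a wedge of shifted copies of $H\intg[\smlhf]$ (the $k$-invariants of $\KO$ are $2$-torsion). Hence $[\MSPL, \syml(\intg)[\smlhf]] \cong \prod_{j} H^{j}(\MSPL; \intg[\smlhf])$, and a map out of $\MSPL$ into this target is detected precisely by its effect on $\intg[\smlhf]$-cohomology, equivalently --- since $H^*(\MSPL;\intg[\smlhf]) = H^*(\MSPL;\rat) \cap (\text{integral lattice after inverting }2)$ --- by the rational characteristic numbers together with integrality.

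The second step is to identify the relevant characteristic class. By the Thom isomorphism, $H^*(\MSPL;\rat) \cong H^*(\BSPL;\rat)\cdot u_\PL$, and the map $\BSO \to \BSPL$ is a rational equivalence, so $H^*(\MSPL;\rat)$ is a polynomial algebra on the Pontrjagin classes times $u_\PL$. For $\Delta_\SPL$ we are given (\ref{equ.phdeltaspl}) that the Pontrjagin character is $\ph(\Delta_\SPL) = L_\PL^{-1}\cup u_\PL$. Applying $\kappa$ changes the coordinate $a \mapsto x$ on homotopy but does not change the underlying rational cohomology class represented, so $\kappa\circ\Delta_\SPL$ is represented by the same $L^{-1}$-type class. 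On the other side, Ranicki's $\sigma^*$ composed with the Pontrjagin character $\ph\colon \syml(\intg)[\smlhf] \simeq \KO[\smlhf] \to H\rat[t^{\pm 1}]$ is, by construction of the symmetric $L$-theory orientation and its compatibility with the $L$-genus (this is classical Ranicki, and is the $\syml$-theoretic shadow of the signature theorem), again the inverse Hirzebruch $L$-class cupped with the Thom class. Thus $\ph(\kappa\circ\Delta_\SPL) = \ph(\sigma^*)$ in $H^*(\MSPL;\rat)$, so the two maps agree rationally.

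It then remains to upgrade ``equal rationally'' to ``homotopic as maps of spectra after inverting $2$.'' Because the target is a generalized Eilenberg--MacLane spectrum with homotopy $\intg[\smlhf][x^{\pm 1}]$ and $\MSPL$ has homotopy that is finitely generated in each degree with the only relevant torsion being $2$-primary (which is killed), the Hurewicz/rational-detection argument is clean: a map $\MSPL \to \syml(\intg)[\smlhf]$ is null iff it is rationally null, since $[\MSPL,\Sigma^j H\intg[\smlhf]] = H^j(\MSPL;\intg[\smlhf])$ injects into $H^j(\MSPL;\rat)$ (the group $H^*(\MSPL;\intg[\smlhf])$ is torsion-free, being a lattice in the rational cohomology). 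Applying this to the difference of the two ring maps --- or, more carefully, noting that the commuting triangle on $\pi_{4k}(\pt)$ established just before the Proposition (via (\ref{equ.sullorientonplcoeffs}) and (\ref{equ.ranorientoncoeffs}), both giving the signature) pins down the value on the bottom homotopy classes, and the ring structure together with $x$-periodicity propagates this --- we conclude $\kappa\circ\Delta_\SPL \simeq \sigma^*$. I expect the main obstacle to be the bookkeeping around integrality: one must make sure that ``agree rationally'' genuinely suffices, i.e. that the forgetful map $[\MSPL,\syml(\intg)[\smlhf]] \to [\MSPL,\syml(\rat)[\smlhf]] = [\MSPL, \KO[\smlhf]\otimes\rat]$ is injective, which is where the splitting of $\KO[\smlhf]$ into Eilenberg--MacLane summands and the torsion-freeness of $H^*(\MSPL;\intg[\smlhf])$ both get used; a cleaner alternative, avoiding characteristic-class computations entirely, is to invoke that $\MSPL[\smlhf]$ is a generalized Eilenberg--MacLane spectrum (its $2$-locally-nontrivial $k$-invariants vanish after inverting $2$), so any ring map out of it is determined by its effect on homotopy groups, and both maps send $[M^{4k}]$ to $\sigma(M)x^k$ --- but establishing that $\MSPL[\smlhf]$ splits requires either citing Madsen--Milgram or redoing the Postnikov analysis, so I would present the cohomological argument as primary.
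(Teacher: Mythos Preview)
Your overall strategy matches the paper's: compute that both maps have Pontrjagin character $L^{-1}_\PL \cup u_\PL$, then invoke an injectivity statement to conclude they are homotopic. The paper carries this out by setting up a commutative square comparing $\ph$ on $\KO[\smlhf]$ with the Taylor--Williams rational splitting of $\syml[\smlhf]$, so that both $\Delta_\SPL$ and $\kappa^{-1}\circ\sigma^*$ are seen to hit $L^{-1}_\PL \cup u_\PL$ in $H^*(\MSPL;\rat)$; it then cites Madsen--Milgram (Cor.~5.25) for the key fact that $\ph: \widetilde{\KO}(\MSPL_{(p)}) \to H^*(\MSPL;\rat)$ is injective at every odd prime~$p$.

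The gap in your version is precisely in the injectivity step. The assertion that ``$\KO[\smlhf]$ splits as a wedge of shifted copies of $H\intg[\smlhf]$ (the $k$-invariants of $\KO$ are $2$-torsion)'' is false: $\KO_{(p)}$ for odd $p$ is a height-one spectrum, not a generalized Eilenberg--MacLane spectrum. Concretely, the Postnikov $k$-invariants of $ko_{(p)}$ involve odd-primary Steenrod operations such as $\beta P^1$, and the nontrivial action of Adams operations $\psi^k$ on $\pi_{4n}\KO_{(p)}$ already obstructs any such splitting. Consequently your identification $[\MSPL,\syml(\intg)[\smlhf]] \cong \prod_j H^j(\MSPL;\intg[\smlhf])$ does not hold. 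The fallback claim that $H^*(\MSPL;\intg[\smlhf])$ is torsion-free is also false: at odd primes $\BSPL_{(p)}$ contains a coker-$J$ factor contributing genuine odd torsion to $H^*(\BSPL;\intg_{(p)})$ and hence to $H^*(\MSPL;\intg_{(p)})$. So ``agree rationally $\Rightarrow$ agree after inverting $2$'' cannot be deduced from either of the structural facts you invoke, and your proposed alternative (that $\MSPL[\smlhf]$ is a GEM, so ring maps are detected on $\pi_*$) runs into the same issue. What is actually needed is exactly the nontrivial Madsen--Milgram injectivity result the paper cites; with that in hand, your argument goes through and coincides with the paper's.
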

\begin{proof}
We start \emph{at} the prime $2$ with the cohomology class
$L\in H^{4*} (\syml; \intg_{(2)})$ constructed by Taylor and Williams
in \cite{taylorwilliams}.
This yields a specific homotopy class
\[ L: \syml (\intg)_{(2)} \longrightarrow
    \bigoplus_{i\in \intg} H\intg_{(2)} [4i]. \]
The pullback of this class under Ranicki's orientation $\sigma^*$
corresponds under the Thom isomorphism to the
Morgan-Sullivan class $\La \in H^* (\BSPL; \intg_{(2)})$ of
\cite{morgansullivan}.
Rationally, $\La$ becomes the inverse 
$L^{-1} \in H^* (\BSPL; \rat)$ of the Thom-Hirzebruch $L$-class.
The composition
\[ \syml (\real)_{(2)} \longrightarrow
   \syml (\intg)_{(2)} \stackrel{L}{\longrightarrow}
   \bigoplus_{i\in \intg} H\intg_{(2)} [4i] \]
is an equivalence.   
(The individual arrows are not --- the discrepancy is the de Rham invariant.)   
Rationally (i.e. inverting $2$), this gives an equivalence
\[ \syml (\real)_{(0)} = \syml (\intg)_{(0)} \stackrel{\simeq}{\longrightarrow}
   \bigoplus_{i\in \intg} H\rat [4i]. \]
The map
\[ \syml^* (\MSPL) \longrightarrow H^* (\MSPL;\rat)  \]
induced by the composition
\[ \syml (\intg) \stackrel{\operatorname{loc}}{\longrightarrow}
   \syml (\intg)_{(0)} = \syml (\real)_{(0)} \stackrel{\simeq}{\longrightarrow}
   \bigoplus_{i\in \intg} H\rat [4i] \]
thus sends the universal stable $\syml$-orientation
$u_\syml = \sigma^* \in \widetilde{\syml}^0 (\MSPL)$
to 
\[ L^{-1}_\PL \cup u_\PL \in H^* (\MSPL;\rat), \] 
see also
\cite[Remark 16.2, p. 176]{ranickialtm}.
It sends the signature $1$ element $x^k \in \syml_{4k} (\intg)$ to
$1 \in \pi_{4k} (\bigoplus_{i\in \intg} H\rat [4i])$.   
Consider the diagram
\begin{equation} \label{dia.kophsymltw} 
\xymatrix@C=50pt{
\KO [\smlhf] \ar[d]_\kappa^\simeq \ar[r]^{\operatorname{loc}} &
  \KO_{(0)} \ar[d]_{\kappa_{(0)}}^\simeq \ar[r]^{\ph}_\simeq &
   \bigoplus_{i\in \intg} H\rat [4i] \ar@{=}[d] \\
\syml [\smlhf] \ar[r]^{\operatorname{loc}} &
  \syml_{(0)} \ar[r]^\simeq &
   \bigoplus_{i\in \intg} H\rat [4i] .    
} 
\end{equation}
The left hand localization square commutes for general reasons.
The right hand square commutes up to homotopy as well:
Since the involved spectra are graded Eilenberg-MacLane spectra
of graded $\rat$-vector spaces,
it suffices to check that the induced square on homotopy rings
commutes. The vertical isomorphism induced by $\kappa_{(0)}$
sends, according to its very construction, the generator 
$a^k \in \pi_{4k} (\KO_{(0)})$ to $x^k \in \pi_{4k} (\syml_{(0)})$,
which in turn maps to $1 \in \pi_{4k} (\bigoplus_{i\in \intg} H\rat [4i])$.  
As for the Pontrjagin character,
\[  \ph (a^k) = (\ch \circ c)(a^k) = \ch (\beta^{2k})
    = \ch (\beta)^{2k} = 1.  \]
Hence the right hand square commutes up to homotopy.
During the course of the above argument, we have drawn upon several relevant
remarks made by Randal-Williams in \cite{randalfamilysignature}.    
Now evaluate the above diagram on $\MSPL$:
\[ \xymatrix@C=50pt{
(\KO [\smlhf])^0 (\MSPL) 
  \ar[d]_\kappa^\simeq \ar[r]^{\operatorname{loc}} &
  (\KO_{(0)})^0 (\MSPL) 
      \ar[d]_{\kappa_{(0)}}^\simeq \ar[r]^{\ph}_\simeq &
   \bigoplus_{i\in \intg} H^{4i} (\MSPL; \rat) \ar@{=}[d] \\
(\syml [\smlhf])^0 (\MSPL) \ar[r]^{\operatorname{loc}} &
  (\syml_{(0)})^0 (\MSPL) \ar[r]^\simeq &
   \bigoplus_{i\in \intg} H^{4i} (\MSPL; \rat).    
} \]
We shall show that the elements
\[ \Delta_\SPL,~ \kappa^{-1} \circ \sigma^* \in
     (\KO [\smlhf])^0 (\MSPL)   \]
are equal.
According to (\ref{equ.phdeltaspl}), 
$\ph (\Delta_\SPL) = L^{-1}_\PL \cup u_\PL$.
By the commutativity of the diagram,
the Pontrjagin character $\ph (\kappa^{-1} \circ \sigma^*)$, 
given by mapping the element horizontally, 
can alternatively be calculated by first mapping down vertically,
and then mapping to the right horizontally.
Mapping down via $\kappa$ yields $\sigma^*$,
which is then mapped to $L^{-1}_\PL \cup u_\PL$ as discussed above.
It follows that the two elements have the same Pontrjagin character,
\[ \ph (\Delta_\SPL) = \ph (\kappa^{-1} \circ \sigma^*). \]
Now the element $\Delta_\SPL$ is characterized by its
Pontrjagin character, \cite[p. 115]{madsenmilgram}.
(Madsen and Milgram show that the Pontrjagin character
$\ph: \widetilde{\KO} (\MSPL_{(p)}) \to H^* (\MSPL;\rat)$ 
is injective at every odd prime $p$, \cite[Cor. 5.25]{madsenmilgram}.)
It follows that $\kappa^{-1} \sigma^*$ is homotopic to
$\Delta_\SPL$.
\end{proof}

\begin{cor}
The Sullivan orientation $\Delta_\SPL: \MSPL \to \KO [\smlhf]$
is homotopic to a morphism of (homotopy) ring spectra.
\end{cor}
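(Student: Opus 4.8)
The plan is to deduce the corollary directly from Proposition~\ref{prop.kappadeltasplisranickior} by transporting the ring structure along the $\mathbb{E}_\infty$-ring equivalence $\kappa$. We already know from that proposition that $\Delta_\SPL \simeq \kappa^{-1}\circ \sigma^*$ as maps $\MSPL \to \KO[\smlhf]$. The right-hand side is visibly a composition of ring-spectrum maps: $\sigma^*:\MSPL \to \syml(\intg)$ is a morphism of ring spectra (indeed an $\mathbb{E}_\infty$-ring map, by Laures--McClure), its postcomposition with the localization $\syml(\intg)\to\syml(\intg)[\smlhf]$ is again a ring map, and $\kappa^{-1}:\syml(\real)[\smlhf]\xrightarrow{\simeq}\KO[\smlhf]$ is a ring equivalence by Proposition~\ref{prop.einftyequivkol} (after the identification $\syml(\intg)[\smlhf]\simeq\syml(\real)[\smlhf]$ recorded in Section~\ref{sec.multidentkol}). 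Hence $\kappa^{-1}\circ\sigma^*$ is a morphism of homotopy ring spectra, and $\Delta_\SPL$, being homotopic to it, is homotopic to a morphism of homotopy ring spectra.

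That is essentially the whole argument, but I would spell out two small points. First, one should note that "morphism of homotopy ring spectra" is a homotopy-invariant notion: if $f\simeq g$ and $g$ is compatible with the multiplications and units up to homotopy, then so is $f$, since the relevant diagrams (involving $f\wedge f$ versus the multiplication maps, and the unit $S\to \MSPL \to \KO[\smlhf]$) commute up to homotopy once we substitute $g$ for $f$ and use the homotopy $f\simeq g$. Second, I would make explicit which ring structures are being used on the target: $\KO[\smlhf]$ carries the localized ring structure described in Section~\ref{sec.multidentkol}, and under $\kappa$ this corresponds to the L-theory multiplication on $\syml(\real)[\smlhf]$; the compatibility of $\kappa$ with both is exactly the content of Proposition~\ref{prop.einftyequivkol}, so no new verification is needed.

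There is essentially no serious obstacle here: the corollary is a formal consequence of the two preceding results. The only thing to be careful about is bookkeeping of the identifications $\syml(\intg)[\smlhf]=\syml(\rat)[\smlhf]=\syml(\real)[\smlhf]$ as $\mathbb{E}_\infty$-ring spectra (established in Section~\ref{sec.multidentkol}), so that the codomain of $\sigma^*$ after localization matches the domain of $\kappa^{-1}$ compatibly with ring structures. Once that is in place, the chain
\[
\MSPL \xrightarrow{\sigma^*} \syml(\intg) \longrightarrow \syml(\intg)[\smlhf] = \syml(\real)[\smlhf] \xrightarrow{\kappa^{-1}} \KO[\smlhf]
\]
exhibits $\Delta_\SPL$, up to homotopy, as a composite of ring maps, which is all that is claimed. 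I would remark in passing that with slightly more care one even gets an $\mathbb{E}_\infty$-refinement, since every map in the chain is an $\mathbb{E}_\infty$-ring map; but the corollary as stated only asks for homotopy ring spectra, so I would not belabor this.
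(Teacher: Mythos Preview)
Your proposal is correct and matches the paper's approach exactly: the corollary is stated in the paper without proof because it is an immediate consequence of Proposition~\ref{prop.kappadeltasplisranickior}, and you have spelled out precisely that deduction---$\Delta_\SPL \simeq \kappa^{-1}\circ \sigma^*$ exhibits $\Delta_\SPL$ up to homotopy as a composite of ring maps. Your additional remarks on the homotopy-invariance of the notion and on the identifications of the various $\syml(R)[\smlhf]$ are accurate and helpful, though not strictly necessary for the one-line argument the paper has in mind.
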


In view of Proposition \ref{prop.kappadeltasplisranickior}, we may thus adopt the 
following convention for the spectrum level Sullivan orientation:
\begin{defn} \label{def.sullorientringmap}
Let 
\[ \Delta: \MSPL \longrightarrow \KO [\smlhf] \]
be the morphism of ring spectra given by the composition
\[ \MSPL \stackrel{\sigma^*}{\longrightarrow} 
  \syml (\intg)[\smlhf] = \syml (\real)[\smlhf] 
   \stackrel{\kappa^{-1}}{\simeq} \KO [\smlhf]  \]
of Ranicki's orientation with a ring equivalence $\kappa^{-1}$
inverse to the ring equivalence $\kappa$ of
Proposition \ref{prop.einftyequivkol}.
\end{defn}

\begin{defn} \label{def.deltaofplmicrobundle}
For an oriented rank $m$ PL microbundle
(or PL $(\real^m,0)$-bundle) $\alpha$, let
\[ \Delta (\alpha) \in \widetilde{\KO}^m (\Th (\alpha)) [\smlhf] \]
be the image
\[ \Delta (\alpha) = \Delta_* (u_\SPL (\alpha)) \]
of the $\MSPL$-orientation under
\[ \RMSPL^m (\Th (\alpha)) \stackrel{\Delta_*}{\longrightarrow}
    \widetilde{\KO}^m (\Th (\alpha)) [\smlhf]. \]
This is indeed a $\KO [\smlhf]$-orientation of $\alpha$,
since $\Delta$ is multiplicative 
(\cite[p. 305, Prop. V.1.6]{rudyak}).
\end{defn}
From this perspective, it is immediate that $\kappa$ aligns $\Delta (\alpha)$
and Ranicki's Thom class $u_\syml (\alpha)$:
\begin{lemma} \label{lem.kappadeltaisranickiu}
Let $\alpha: X \to \BSPL (m)$ be
an oriented PL microbundle of rank $m$.
Then the isomorphism
\[ \kappa_*: \widetilde{\KO}^m (\Th (\alpha))\otimes \intg [\smlhf] 
   \stackrel{\simeq}{\longrightarrow} 
   \widetilde{\syml}^m (\Th (\alpha)) \otimes \intg [\smlhf] \]
maps $\Delta (\alpha)$ to $u_\syml (\alpha)$.
\end{lemma}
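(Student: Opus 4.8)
The plan is to deduce the lemma formally from Definition \ref{def.sullorientringmap} (equivalently Proposition \ref{prop.kappadeltasplisranickior}) together with naturality of the maps induced on cohomology by morphisms of ring spectra. First I would unwind the two sides. By Definition \ref{def.deltaofplmicrobundle}, $\Delta(\alpha) = \Delta_*(u_\SPL(\alpha))$ is the image of the oriented PL cobordism Thom class $u_\SPL(\alpha) \in \RMSPL^m(\Th(\alpha))$ under the homomorphism $\Delta_*$ in reduced cohomology induced by the ring spectrum morphism $\Delta: \MSPL \to \KO[\smlhf]$. On the other hand, $u_\syml(\alpha) = \sigma^*(u_\SPL(\alpha))$ is the image of the same class under the homomorphism induced by Ranicki's orientation $\sigma^*: \MSPL \to \syml(\intg)$, transported into the localization $\syml(\intg)[\smlhf] = \syml(\real)[\smlhf]$; this is precisely the element denoted $u_\syml(\alpha)$ in the localization square preceding the lemma.

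Next I would apply $\kappa_*$ to $\Delta(\alpha)$ and use functoriality, $\kappa_* \circ \Delta_* = (\kappa \circ \Delta)_*$. Since $\Delta$ was defined as $\kappa^{-1} \circ \sigma^*$, we have $\kappa \circ \Delta \simeq \sigma^*$ as ring spectrum maps $\MSPL \to \syml(\real)[\smlhf]$, and therefore $\kappa_*(\Delta(\alpha)) = (\kappa \circ \Delta)_*(u_\SPL(\alpha)) = \sigma^*(u_\SPL(\alpha)) = u_\syml(\alpha)$, which is the assertion.

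The only remaining point is bookkeeping about the coefficient groups. The map $\kappa_*$ appears in the statement as a homomorphism $\widetilde{\KO}^m(\Th(\alpha)) \otimes \intg[\smlhf] \to \widetilde{\syml}^m(\Th(\alpha)) \otimes \intg[\smlhf]$, and I would identify these groups with $\widetilde{\KO[\smlhf]}^m(\Th(\alpha))$ and $\widetilde{\syml(\real)[\smlhf]}^m(\Th(\alpha)) = \widetilde{\syml(\intg)[\smlhf]}^m(\Th(\alpha))$: localization away from $2$ is smashing, hence commutes with forming cohomology --- immediate when $\Th(\alpha)$ is of finite type, as in the universal case $\Th(\alpha) = \MSPL(m)$, and routine in general via the Moore-spectrum description --- so that under these identifications $\kappa_*$ is literally the cohomology map induced by the $\mathbb{E}_\infty$-equivalence $\kappa$ of Proposition \ref{prop.einftyequivkol}. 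I do not anticipate a substantive obstacle: the content of the lemma was effectively built into Definition \ref{def.sullorientringmap}, and the only care needed is to keep the chain of localizations and the identification $\syml(\intg)[\smlhf] = \syml(\real)[\smlhf]$ from Section \ref{sec.multidentkol} coherent throughout.
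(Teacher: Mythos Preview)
Your proposal is correct and follows exactly the same route as the paper's own proof: unwind $\Delta(\alpha) = \Delta_* u_\SPL(\alpha)$ and $u_\syml(\alpha) = \sigma^* u_\SPL(\alpha)$, then use $\kappa \circ \Delta \simeq \sigma^*$ from Definition~\ref{def.sullorientringmap} to compute $\kappa_* \Delta(\alpha) = \kappa_* \kappa^{-1}_* \sigma^* u_\SPL(\alpha) = u_\syml(\alpha)$. The paper does this in a single displayed chain of equalities; your additional paragraph on the localization bookkeeping is more explicit than the paper but not substantively different.
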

\begin{proof}
According to Ranicki's definition,
$u_\mbl (\alpha) = \sigma^* (u_\SPL (\alpha)).$ 
Consequently,
\[
\kappa_* \Delta (\alpha) = \kappa_* \Delta_* (u_\SPL (\alpha))
 = \kappa_* \kappa^{-1}_* \sigma^* (u_\SPL (\alpha))
    = \sigma^* u_\SPL (\alpha) 
= u_\mbl (\alpha).    
\]
\end{proof}

Given an oriented vector or PL bundle $\alpha$ of rank $m$, let
$u_\intg (\alpha) \in \widetilde{H}^m (\Th (\alpha);\intg)$ denote
its integral Thom class and 
$u_\rat (\alpha) \in \widetilde{H}^m (\Th (\alpha);\rat)$
the image of $u_\intg (\alpha)$ under
$\widetilde{H}* (-;\intg)\to \widetilde{H}^* (-;\rat)$.
The methods used to prove 
Proposition \ref{prop.kappadeltasplisranickior} imply readily:
\begin{lemma} \label{lem.phdeltaalpha}
Let $\alpha$ be an oriented PL microbundle over $X$.
Rationally, $\Delta (\alpha)$ is given by
\[ \ph \Delta (\alpha) = L^{-1} (\alpha) \cup u_\rat (\alpha) \in H^* (X;\rat). \]
\end{lemma}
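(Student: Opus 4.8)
The plan is to deduce Lemma \ref{lem.phdeltaalpha} from the computation already carried out in the proof of Proposition \ref{prop.kappadeltasplisranickior}, applied to an arbitrary PL microbundle instead of the universal one. The key point is that all the maps in sight are induced by morphisms of (ring) spectra and hence natural, so the identity $\ph(\sigma^*) = L^{-1}_\PL \cup u_\PL$ in $H^*(\MSPL;\rat)$ — which was the heart of the earlier proof — pulls back along the classifying map of $\alpha$ to the desired statement about $\Delta(\alpha)$.

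First I would let $f: X \to \MSPL$ be the map classifying the stable PL bundle underlying $\alpha$ (more precisely, the map of Thom spaces $\Th(\alpha) \to \MSPL$ covering the bundle map to the universal bundle), so that $u_\SPL(\alpha) = f^*(u_\SPL)$ where $u_\SPL = \id \in \RMSPL^0(\MSPL)$ is the universal stable Thom class. By Definition \ref{def.deltaofplmicrobundle}, $\Delta(\alpha) = \Delta_*(u_\SPL(\alpha)) = f^*(\Delta_*(u_\SPL)) = f^*(\Delta_\SPL)$ by naturality, where I use that $\Delta \colon \MSPL \to \KO[\smlhf]$ restricted to the universal stable Thom class is exactly $\Delta_\SPL \in (\KO[\smlhf])^0(\MSPL)$ (this is how $\Delta_\SPL$ and $\Delta$ are related — indeed $\Delta = \Delta_\SPL$ after the identification $\kappa$, by Definition \ref{def.sullorientringmap}). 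Then, since the Pontrjagin character $\ph$ is a stable cohomology operation and hence natural, $\ph\,\Delta(\alpha) = f^*\,\ph(\Delta_\SPL)$.

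Next I would invoke Equation (\ref{equ.phdeltaspl}) from Section \ref{sec.orientsullivanranicki}, which states $\ph(\Delta_\SPL) = L^{-1}_\PL \cup u_\PL \in H^*(\MSPL;\rat)$. Pulling this back along $f$ and using that $f^* u_\PL = u_\rat(\alpha)$ (the Thom class is natural) and $f^* L_\PL = L(\alpha)$ (the universal $L$-class pulls back to the $L$-class of $\alpha$, using the standard identification of $L_\PL$ with the Hirzebruch $L$-class discussed after Equation (\ref{equ.phdeltaspl})), one obtains $\ph\,\Delta(\alpha) = L^{-1}(\alpha) \cup u_\rat(\alpha)$, using that $f^*$ is a ring homomorphism so commutes with cup products and sends $L^{-1}_\PL$ to $f^*(L_\PL)^{-1} = L(\alpha)^{-1}$. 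This lives in $H^*(\Th(\alpha);\rat)$, which one identifies with $H^*(X;\rat)$ via the (rational) Thom isomorphism, matching the statement.

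The only genuine subtlety — and the step I would flag as requiring a word of care rather than being entirely routine — is the bookkeeping around the Thom isomorphism and the role of $u_\PL$: the class $\ph\,\Delta(\alpha)$ naturally sits in reduced rational cohomology of the Thom space $\Th(\alpha)$, and writing the formula as an element of $H^*(X;\rat)$ implicitly divides by the Thom class. One should be explicit that the equation is being read under the Thom isomorphism $\widetilde{H}^*(\Th(\alpha);\rat) \cong H^{*-m}(X;\rat)$, under which $u_\rat(\alpha)$ corresponds to $1$ and $L^{-1}(\alpha) \cup u_\rat(\alpha)$ corresponds to $L^{-1}(\alpha)$; equivalently, the displayed formula should be understood as an equality in $\widetilde{H}^*(\Th(\alpha);\rat)$ with the convention that a class $c \in H^*(X;\rat)$ is tacitly replaced by $c \cup u_\rat(\alpha)$, exactly as in the universal statement (\ref{equ.phdeltaspl}). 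No new homotopy-theoretic input is needed beyond Proposition \ref{prop.kappadeltasplisranickior} and naturality; this is why the lemma "follows readily."
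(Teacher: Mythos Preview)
Your argument is correct. You reduce the claim to the universal identity (\ref{equ.phdeltaspl}) by pulling back along the classifying map $f:\Th(\alpha)\to\MSPL$ and invoking naturality of both $\Delta_*$ and $\ph$; the careful remark about the Thom isomorphism is apt and handles the only bookkeeping issue.

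The paper takes a slightly different but closely related route. Rather than pulling back (\ref{equ.phdeltaspl}) directly, it evaluates the commutative diagram (\ref{dia.kophsymltw}) on $\Th(\alpha)$, uses Lemma \ref{lem.kappadeltaisranickiu} to identify $\kappa_*\Delta(\alpha)=u_\syml(\alpha)$, and then reads off $\ph\,\Delta(\alpha)$ by following $u_\syml(\alpha)$ along the lower horizontal composition, where it is known (from the proof of Proposition \ref{prop.kappadeltasplisranickior}) to become $L^{-1}(\alpha)\cup u_\rat(\alpha)$. In other words, the paper passes through the $\syml$-theoretic Thom class and the equivalence $\kappa$, whereas you stay on the $\KO$-side and use only (\ref{equ.phdeltaspl}) plus naturality. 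Your route is a touch more economical; the paper's has the minor advantage of making explicit that the formula for $\ph\,\Delta(\alpha)$ is really the $\syml$-theoretic Thom class formula transported via $\kappa$, which is the viewpoint used later (e.g.\ in Lemma \ref{lem.thomcommkappa}). Both arguments rest on the same substantive input, namely the rational identification established in the proof of Proposition \ref{prop.kappadeltasplisranickior}.
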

\begin{proof}
One evaluates the commutative diagram (\ref{dia.kophsymltw}) 
on the base space of $\alpha$
and notes that by Lemma \ref{lem.kappadeltaisranickiu},
$\kappa_* \Delta (\alpha) = u_\syml (\alpha_\STOP).$ By commutativity,
$\ph \Delta (\alpha)$ may be computed by mapping $\kappa_* \Delta (\alpha)$ 
along the lower horizontal composition.
As observed in the proof of the proposition, the $\syml$-cohomology Thom class 
is given rationally (i.e. along the lower horizontal composition) 
by the product of the inverse $L$-class with the $H\rat$-cohomology
Thom class $u_\rat$, $L^{-1} (\alpha) \cup u_\rat (\alpha)$.
(See \cite[Remark 16.2]{ranickialtm}; Ranicki writes $-\otimes \rat$ instead
of $\ch_\syml \loc$ and omits cupping with $u_\rat$ in his notation.)
\end{proof}

For the sake of completeness, we also record the case of the trivial bundle:
\begin{lemma}
If $\alpha$ is the trivial rank $4k=m$ PL bundle over a point,
then $\Delta (\alpha) = a^k \in \widetilde{\KO}^{4k} (S^{4k}) [\smlhf]$.
\end{lemma}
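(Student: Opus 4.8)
The plan is to trace the definition of $\Delta(\alpha)$ through the factorization $\Delta = \kappa^{-1}\circ\sigma^*$ and observe that all three spectra are (after localization) $4$-periodic with $\pi_{4k}$ freely generated by the respective Bott-type classes, so the computation reduces to evaluating $\sigma^*$ on a point. First I would recall that for the trivial rank $4k$ PL bundle $\alpha$ over a point, the Thom space $\Th(\alpha)$ is $S^{4k}$, and the PL Thom class $u_\SPL(\alpha)\in\RMSPL^{4k}(S^{4k})$ is the image of the unit under the suspension isomorphism, i.e. it corresponds to $[\pt]\in\Omega^\SPL_0(\pt)=\MSPL_0(\pt)$, realized concretely as the bordism class of the identity map of a point. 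Equivalently, $u_\SPL(\alpha)$ is the stable Thom class of the trivial bundle, which is just the unit of the ring spectrum $\MSPL$ transported to degree $4k$ via the identification $\widetilde{\MSPL}^{4k}(S^{4k})\cong\MSPL_0(\pt)$.

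Next I would apply $\Delta_* = (\kappa^{-1}\circ\sigma^*)_*$. Since $\sigma^*$ is a morphism of ring spectra, it sends the unit to the unit; under the identification $\widetilde{\syml}^{4k}(S^{4k})\otimes\intg[\smlhf]\cong\syml_{4k}(\intg)[\smlhf]=\intg[\smlhf]\langle x^k\rangle$, the unit goes to the signature $1$ element $x^k$ (this is exactly Equation (\ref{equ.ranorientoncoeffs}) evaluated on $[M^{4k}]$ when $M$ is a point and $4k=0$, combined with periodicity — or more directly, $u_\syml(\alpha)=\sigma^*(u_\SPL(\alpha))$ is by construction Ranicki's $\syml$-Thom class of the trivial bundle, which is $x^k$ in this identification). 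Then $\kappa^{-1}$, by Proposition \ref{prop.einftyequivkol}, sends $x\mapsto a$ on homotopy rings, hence $x^k\mapsto a^k\in\pi_{4k}(\KO)[\smlhf]\cong\widetilde{\KO}^{4k}(S^{4k})[\smlhf]$. Chaining these gives $\Delta(\alpha)=\Delta_*(u_\SPL(\alpha))=a^k$.

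Alternatively — and this is probably the cleanest route — I would invoke Lemma \ref{lem.kappadeltaisranickiu}: $\kappa_*\Delta(\alpha)=u_\syml(\alpha)$, and Ranicki's Thom class of the trivial rank $4k$ bundle over a point is $x^k$ under the standard identification. Since $\kappa_*$ realizes $x\mapsto a$, applying $\kappa^{-1}_*$ yields $\Delta(\alpha)=a^k$. As a consistency check one can also run Lemma \ref{lem.phdeltaalpha}: rationally $\ph\Delta(\alpha)=L^{-1}(\alpha)\cup u_\rat(\alpha)$; for the trivial bundle $L(\alpha)=1$ and $u_\rat(\alpha)$ is the generator of $\widetilde{H}^{4k}(S^{4k};\rat)$, so $\ph\Delta(\alpha)$ is that generator, which is exactly $\ph(a^k)=\ch(\beta^{2k})=1$ as computed in the proof of Proposition \ref{prop.kappadeltasplisranickior}; since $\ph$ is injective on $\widetilde{\KO}(S^{4k})[\smlhf]$ (the sphere being a finite complex, this follows from the prime-by-prime injectivity statements already used), this pins down $\Delta(\alpha)=a^k$.

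There is essentially no obstacle here: the statement is a normalization/sanity-check lemma, and the only mild care needed is bookkeeping of the suspension and Thom isomorphisms identifying $\widetilde{\KO}^{4k}(S^{4k})[\smlhf]$ with $\pi_{4k}(\KO)[\smlhf]=\intg[\smlhf]\langle a^k\rangle$ so that ``the unit in degree $4k$'' really is $a^k$ and not, say, a unit multiple of it — but this is forced by the fact that $a$ is defined (Proposition \ref{prop.einftyequivkol}) as the periodicity generator, and that $\sigma^*$ and $\kappa$ are unital ring maps, so no ambiguous scalar can enter.
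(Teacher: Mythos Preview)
Your proposal is correct. Your primary arguments (via the factorization $\Delta=\kappa^{-1}\circ\sigma^*$ and via Lemma \ref{lem.kappadeltaisranickiu}) take a more algebraic route than the paper: you observe that the Thom class of a trivial bundle is the suspended unit, that unital ring maps preserve it, and that the periodicity identification $\widetilde{\KO}^{4k}(S^{4k})[\smlhf]\cong\pi_0(\KO)[\smlhf]\cong\pi_{4k}(\KO)[\smlhf]$ names this element $a^k$. The paper instead makes your ``consistency check'' the main argument: it establishes carefully that the Pontrjagin character $\ph[\smlhf]$ is injective on $\widetilde{\KO}^0(S^{4k})[\smlhf]$ (via the Chern character isomorphism on spheres and the complexification isomorphism after inverting $2$), then computes $\ph[\smlhf](a^k)=v^{2k}$ and, using Lemma \ref{lem.phdeltaalpha}, $\ph[\smlhf](\Delta(\alpha))=L^{-1}(\alpha)\cup u_\rat(\alpha)=u_\rat(\alpha)=v^{2k}$, concluding by injectivity. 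Your approach is shorter and exploits multiplicativity directly; the paper's approach is more self-contained in that it does not rely on knowing in advance that $u_\syml$ of a trivial bundle is the periodicity generator, and it spells out the suspension and complexification identifications explicitly. One small remark: your identification $\widetilde{\syml}^{4k}(S^{4k})\cong\syml_{4k}(\intg)[\smlhf]$ should more precisely read $\cong L^0(\intg)[\smlhf]$ via suspension, and the label $x^k$ enters only after invoking periodicity --- you do flag this at the end, but it would be worth stating up front that the very assertion ``$\Delta(\alpha)=a^k$'' already presupposes this periodicity identification.
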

\begin{proof}
The Chern character
$\ch: \K^0 (S^{4k}) \to \bigoplus_{i=0}^\infty H^{2i} (S^{4k};\rat)$
is injective and has image
$H^* (S^{4k};\intg) \subset H^* (S^{4k};\rat)$.
Thus it is an isomorphism
\[ \ch: \K^0 (S^{4k})
   \stackrel{\cong}{\longrightarrow}
   H^* (S^{4k};\intg) = \intg \oplus \intg. \]
It restricts to an isomorphism
\[ \ch: \widetilde{\K}^0 (S^{4k})
   \stackrel{\cong}{\longrightarrow}
   \widetilde{H}^* (S^{4k};\intg) = \intg \]   
between reduced groups. 
The localization at odd primes is an isomorphism
\[ \ch [\smlhf]: \widetilde{\K}^0 (S^{4k})[\smlhf]
   \stackrel{\cong}{\longrightarrow}
   H^{4k} (S^{4k})[\smlhf] = \intg [\smlhf]. \]   
We turn next to complexification.
This is a morphism $c: \KO \to \K$ of spectra which induces a ring homomorphism
$c_*: \pi_* (\KO) \to \pi_* (\K)$ on homotopy rings
(\cite[p. 304]{switzer}).
In degree $4$,
$c_* = 2: \pi_4 (\KO)=\intg \to \intg=\pi_4 (\K)$,
while in degree $8$,
$c_*: \pi_8 (\KO)=\intg \to \intg=\pi_8 (\K)$
is an isomorphism.
The localized ring homomorphism
\[ c_* [\smlhf]: \pi_* (\KO)[\smlhf] = \intg [\smlhf][a^{\pm 1}]
\longrightarrow \intg [\smlhf][\beta^{\pm 1}]= \pi_* (\K)[\smlhf]
\]
sends $a\in \pi_4 (\KO)[\smlhf]$ to
$\beta^2 \in \pi_4 (\K)[\smlhf]$.
In particular, 
$c_* [\smlhf]: \pi_{4k} (\KO)[\smlhf] \to 
  \pi_{4k} (\K)[\smlhf]$
is an isomorphism, mapping $a^k \mapsto \beta^{2k}$.
The Chern character of $\beta^k$ is given by $v^k$, where
$v\in H^2 (S^2;\intg)$ is the canonical generator, i.e.
the first Chern class
$c_1 (H)$ of the canonical (hyperplane) line bundle $H$ over
$S^2 = \cplx P^1$. 
The Pontrjagin character $\ph = \ch \circ c$ localizes to 
$\ph [\smlhf]$ given by the composition
\[ \xymatrix{
\widetilde{\KO}^0 (S^{4k})[\smlhf] \ar[dr]_{\ph [\smlhf]} 
    \ar[r]^{c_* [\smlhf]}_\cong
& \widetilde{\K}^0 (S^{4k})[\smlhf] \ar[d]^{\ch [\smlhf]}_\cong \\
& H^{4k} (S^{4k};\intg)[\smlhf].
} \]     
Here we have used suspension isomorphisms to identify
\[ \xymatrix@C=10pt{  
\widetilde{\KO}^0 (S^{4k}) \ar[d]_{c_*} \ar@{=}[r]^\sim &
  \widetilde{\KO}^{-4k} (S^0) \ar@{=}[r] &
    \KO^{-4k} (\pt) \ar@{=}[r] &
       \pi_{4k} (\KO) \ar[d]^{c_*} \\
\widetilde{\K}^0 (S^{4k}) \ar@{=}[r]^\sim &
  \widetilde{\K}^{-4k} (S^0) \ar@{=}[r] &
    \K^{-4k} (\pt) \ar@{=}[r] &
       \pi_{4k} (\K).       
} \]  
Therefore,
\[ \ph [\smlhf] (a^k)
  = \ch [\smlhf] (c_* [\smlhf] (a^k))
  = \ch [\smlhf] (\beta^{2k}) = v^{2k}. \]
The generator $v^{2k} \in H^{4k} (S^{4k};\intg)$ agrees with
the Thom class $u_\intg (\alpha) \in 
H^{4k} (\real^{4k} \cup \{ \infty \};\intg)$ of the trivial rank
$4k$-bundle $\alpha$ over a point.
Let $\iota: H^* (S^{4k};\ism) \hookrightarrow
H^* (S^{4k};\rat)$ be the injection induced by $\ism \subset \rat$.
By Lemma \ref{lem.phdeltaalpha},
\begin{align*}
\iota \ph [\smlhf] (\Delta (\alpha))
&= L^{-1} (\alpha) \cup u_\rat (\alpha) 
   = 1\cup u_\rat (\alpha) = u_\rat (\alpha) \\
& =\iota (u_\intg (\alpha)) 
  = \iota (v^{2k}) = \iota \ph [\smlhf] (a^k).
\end{align*}     
Since $\iota \ph [\smlhf]$ is injective,
\[ \Delta (\alpha) = a^k. \]  
\end{proof}

\section{The Classical Construction of the Siegel-Sullivan Orientation}
\label{sec.classicalsiegelsullivan}

Using Goresky-MacPherson's intersection homology, Witt spaces have been introduced by 
P. Siegel in \cite{siegel}
as a geometric cycle theory representing $\KO$-homology 
at odd primes. Sources on intersection homology include
\cite{gmih1}, \cite{gmih2}, \cite{kirwanwoolf}, 
\cite{friedmanihbook}, \cite{maximinthombook}, \cite{borel}, \cite{banagltiss}.
\begin{defn}
A \emph{Witt space} is an oriented PL pseudomanifold such that
the links $L^{2k}$ of odd codimensional PL intrinsic strata have
vanishing lower middle-perversity degree $k$ rational intersection homology,
$IH^{\bar{m}}_k (L^{2k};\rat)=0$.
\end{defn}
For example, pure-dimensional complex algebraic varieties are Witt spaces,
since they are oriented pseudomanifolds and 
possess a Whitney stratification whose strata all have
even codimension. The vanishing condition on the intersection
homology of links $L^{2k}$ is equivalent to requiring the canonical morphism
from lower middle to upper middle perversity intersection chain sheaves
to be an isomorphism in the derived category of sheaf complexes.
Consequently, these middle perversity intersection chain sheaves are
Verdier self-dual, and this induces global Poincar\'e duality for
the middle perversity intersection homology groups of a compact Witt space.
In particular, compact Witt spaces $X$ have a well-defined bordism invariant 
signature $\sigma (X)$ and $L$-classes $L_* (X)\in H_* (X;\rat)$ which agree with
the Poincar\'e duals of Hirzebruch's tangential $L$-classes when $X$ is smooth.
The notion of \emph{Witt spaces with boundary} 
can be introduced as pairs
$(X,\partial X),$ where $X$ is a PL space and $\partial X$ a 
stratum preservingly collared PL subspace of $X$ such that
$X-\partial X$ and $\partial X$ are both compatibly oriented Witt spaces.
Let $\Omega^\Witt_* (-)$ denote the bordism theory based on Witt cycles.
Elements of $\Omega^\Witt_n (Y)$ are
Witt bordism classes of continuous maps $f: X^n \to Y$ defined on
$n$-dimensional closed Witt spaces $X$. The theory $\Omega^\Witt_* (-)$
is a homology theory, whose coefficients have been computed by Siegel.
They are nontrivial only in nonnegative degrees divisible by $4$, where
they are given by
$\syml_{4k} (\rat),$ $4k>0,$ and by $\intg$ in degree $0$.

Let $X$ be a closed Witt space of dimension $n$.
Drawing on Sullivan's methods as laid out in 
\cite{sullivanmit} and \cite{sullivansinginspaces},
Siegel constructs in \cite{siegel} a canonical orientation class
\[ \mu_X \in \KO_n (X) \otimes \ism.  \]
(In fact, the class lives in connective $\KO$-homology.)
We shall refer to $\mu_X$ as the 
\emph{Siegel-Sullivan orientation class} of $X$.
Let us briefly outline Siegel's construction, which rests on two fundamental
facts due to Sullivan:
First, there is an exact sequence
\[ 
0 \to \KO^i (Y,B)\otimes \ism \longrightarrow
  \KO^i (Y,B)^\wedge \oplus \KO^i (Y,B)\otimes \rat
  \longrightarrow \KO^i (Y,B)^\wedge \otimes \rat \to 0, 
\]  
where $\KO^i (Y,B)^\wedge$ denotes the profinite completion of $\KO^i (Y,B)$
with respect to groups of odd order.
Second, the natural transformation 
$\Delta_{\SO *}: \Omega^\SO_i (Y,B) \to (\KO [\smlhf])_i (Y,B)$
induces a Conner-Floyd type isomorphism
\[ \Omega^\SO_{i+4*} (Y,B) \otimes_{\Omega^\SO_* (\pt)} \ism
   \cong \KO_i (Y,B) \otimes \ism \]
of $\intg/4\intg$-periodic theories for compact PL pairs $(Y,B)$,
\cite{sullivanmit}, \cite[p. 85]{madsenmilgram}.
Together with universal coefficient considerations, these two facts 
imply that elements of 
$\KO^i (Y,B)\otimes \ism$ are pairs $(\sigma_0, \tau_0)$
of homomorphisms
$\sigma_0: \Omega^\SO_{i+4*} (Y,B) \otimes \rat \to \rat$
and 
$\tau_0: \Omega^\SO_{i+4*} (Y,B;\rat/\ism) \to \rat/\ism$  
such that the \emph{periodicity relations}
\begin{equation} \label{equ.sigmatauperiodicityrelations}
\sigma_0 ([f][M\to \pt]) = \sigma (M)\cdot \sigma_0 [f],~
\tau_0 ([f][M\to \pt]) = \sigma (M)\cdot \tau_0 [f]
\end{equation}
with respect to multiplication by a closed manifold $M$ hold and the diagram
\[ \xymatrix{
\Omega^\SO_{i+4*} (Y,B) \otimes \rat \ar[d] \ar[r]^>>>>>>>>{\sigma_0} 
   & \rat \ar[d] \\
\Omega^\SO_{i+4*} (Y,B;\rat/\ism) \ar[r]^>>>>>{\tau_0} & \rat/\ism
} \]
commutes.
To define $\mu_X$ for a closed Witt space $X^n$,
choose a PL embedding $X\subset \real^m$, $m$ large, of codimension $4k$.
Let $(N,\partial N)$ be a regular neighborhood of $X$.
We will describe an element in
$\KO^{4k} (N,\partial N)\otimes \ism,$ which corresponds under 
Alexander-Spanier-Whitehead
duality to $\mu_X \in \KO_n (X)\otimes \ism$.
Therefore, we need to specify homomorphisms
$(\sigma_X, \tau_X)$ satisfying the above periodicity relations and
the integrality condition, i.e. commutativity of
\[ \xymatrix{
\Omega^\SO_{4(k+*)} (N,\partial N) \otimes \rat 
          \ar[d] \ar[r]^>>>>>>>>{\sigma_X} & \rat \ar[d] \\
\Omega^\SO_{4(k+*)} (N,\partial N;\rat/\ism) 
   \ar[r]^>>>>>{\tau_X} & \rat/\ism.
} \]
The homomorphism $\sigma_X$ is 
\[ \sigma_X ([(M,\partial M) \stackrel{f}{\longrightarrow} (N,\partial N)]
  \otimes r)
    := \sigma (\widetilde{f}^{-1} (X)) \otimes r,~ r\in \rat, \]
where one uses the block-transversality results of
\cite{buonrs}, \cite{mccrory} to make $f$ transverse to $X$
in the PL manifold $N$. The preimage $\widetilde{f}^{-1} (X) \subset M$
under the transverse map $\widetilde{f}$ has the same local structure
as $X$ and thus is again a Witt space with a well-defined 
signature $\sigma (\widetilde{f}^{-1} (X)) \in \intg$.
The homomorphism $\tau_X$ is obtained by specifying a sequence 
of homomorphisms
\[ \tau_{X,k}: \Omega^\SO_* (N,\partial N; \intg/_k) 
   \longrightarrow \intg/_k,~ k \text{ odd}, \]
compatible with respect to divisibility,
which are defined in much the same way as $\sigma_X$, but
using oriented $\intg/_k$-manifolds to represent elements of
$\Omega^\SO_* (N,\partial N; \intg/_k)$. By Novikov additivity
for the signature of compact Witt spaces with boundary, the
transverse inverse image of $X$ in the
$\intg/_k$-manifold has a well-defined (and bordism invariant)
signature in $\intg/_k$, which defines $\tau_{X,k}$.
The periodicity and integrality conditions are satisfied
and thus an element $\mu_X$ is obtained.\\

The homomorphism
$c_*: \KO_n (X)\otimes \ism \to \KO_n (\pt) \otimes \ism$
induced by the constant map $c: X\to \pt$ sends 
$\mu_X$ to the signature of $X$.
Using the orientation class $\mu_X$, Siegel obtains a natural
transformation 
\[ \mu^\Witt: \Omega^\Witt_* (-) \longrightarrow \KO [\smlhf]_* (-) \]
of homology theories by setting
\[ \mu^\Witt ([X \stackrel{f}{\longrightarrow} Y]) 
   = f_* (\mu_X). \]
This transformation then reduces to the signature homomorphism
on coefficient groups. In terms of the transformation, the orientation
class can of course be recovered as
\[ \mu_X = \mu^\Witt ([\id_X]). \]
Siegel's transformation factors through the homomorphism induced
by the connective cover $\ko [\smlhf] \to \KO [\smlhf],$ since
$\Omega^\Witt_* (-)$ is connective.

\begin{thm} (Siegel.) \label{thm.siegelconnkoiswittbord}
The natural transformation
\[ \mu^\Witt [\smlhf]: \Omega^\Witt_* (-) \otimes \ism
   \longrightarrow \ko [\smlhf]_* (-) \otimes \ism \]
is an equivalence of homology theories.
\end{thm}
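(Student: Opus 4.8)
The plan is to verify that $\mu^\Witt[\smlhf]$ is a natural transformation of connective homology theories that induces an isomorphism on coefficient groups, and then invoke a standard comparison argument. Since both $\Omega^\Witt_*(-) \otimes \ism$ and $\ko[\smlhf]_*(-) \otimes \ism$ satisfy the Eilenberg--Steenrod axioms (including the wedge/limit axiom) and are connective, it suffices by a Postnikov-tower or Atiyah--Hirzebruch argument to check that $\mu^\Witt[\smlhf]$ is an isomorphism on a point; naturality and exactness then propagate the isomorphism to all CW pairs. So the real content reduces to computing the coefficient map.

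First I would recall Siegel's computation of $\Omega^\Witt_*(\pt)$: it is $\intg$ in degree $0$, $\syml_{4k}(\rat)$ in degrees $4k > 0$, and trivial otherwise. After tensoring with $\ism = \intg[\smlhf]$, the infinitely generated $2$-primary torsion in $\syml_{4k}(\rat)$ is killed, and the signature splitting $\syml_{4k}(\rat) \to \intg$ identifies $\Omega^\Witt_{4k}(\pt)\otimes\ism$ with $\intg[\smlhf]$ for $k>0$ and with $\intg[\smlhf]$ in degree $0$ as well. On the other side, $\ko[\smlhf]_*(\pt) \otimes \ism = \pi_*(\ko)[\smlhf]$ is $\intg[\smlhf]$ concentrated in degrees $4k$, $k \geq 0$ (the connective cover of $\KO[\smlhf] = \syml(\real)[\smlhf]$, whose homotopy is $\intg[\smlhf][a^{\pm1}]$ by Proposition \ref{prop.einftyequivkol}, truncated to nonnegative degrees). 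The transformation $\mu^\Witt$ sends the bordism class of a closed $4k$-dimensional Witt space $X$ to $c_*(\mu_X)$, which is the signature $\sigma(X) \in \KO_{4k}(\pt)\otimes\ism$; this is precisely the statement recorded just after the classical construction. Hence on $\pi_{4k}$ the map $\mu^\Witt[\smlhf]$ is the signature homomorphism $\syml_{4k}(\rat)\otimes\ism \to \intg[\smlhf]$, which is an isomorphism after inverting $2$; in degree $0$ it is the identity on $\intg[\smlhf]$; in all other degrees it is a map between trivial groups. Thus $\mu^\Witt[\smlhf]$ is an isomorphism on coefficients.

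The concluding step is to upgrade the coefficient isomorphism to an equivalence of homology theories. Both functors are homology theories in the sense of Eilenberg--Steenrod on the category of (finite, or all) CW pairs, and the transformation $\mu^\Witt[\smlhf]$ is compatible with suspension and connecting maps because $\mu^\Witt$ was constructed as a natural transformation of homology theories and localization at $\ism$ is exact. A transformation of homology theories that is an isomorphism on coefficients is an isomorphism on all finite CW pairs by the five lemma and induction over cells (using the Mayer--Vietoris/long exact sequences and the additivity over disjoint unions of cells); the extension to arbitrary CW complexes follows from the colimit/wedge axiom, which holds for $\ko[\smlhf]_*$ since it is represented by a spectrum and for $\Omega^\Witt_*$ by Siegel's geometric construction. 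I expect the main obstacle to be purely bookkeeping: ensuring that the localization $(-)\otimes\ism$ interacts correctly with connectivity (so that one genuinely lands in the connective cover $\ko[\smlhf]$ rather than $\KO[\smlhf]$) and that Siegel's transformation really is a map of homology theories in the strong sense needed — but both of these are already asserted in the text preceding the theorem, so the argument is essentially a formal consequence of the coefficient computation.
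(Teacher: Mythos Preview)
The paper does not give its own proof of this theorem: it is attributed to Siegel and stated without proof, as a known result from \cite{siegel}. Your proposal is essentially the argument Siegel gives there --- reduce to coefficients via the comparison theorem for homology theories, use Siegel's computation $\Omega^\Witt_{4k}(\pt)\cong L^{4k}(\rat)$ (via the Witt class $w$) together with the identification $L^{4k}(\rat)\otimes\ism\cong\intg[\smlhf]$ by signature, and match this against $\pi_{4k}(\ko)[\smlhf]$ --- so your outline is correct and agrees with the original source.

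One minor point of care: you write that in degree $0$ the map is ``the identity on $\intg[\smlhf]$''. Strictly, $\mu^\Witt$ in degree $0$ sends the bordism class of a signed point to the corresponding generator of $\pi_0(\ko)[\smlhf]$, and one should verify this is an isomorphism rather than merely asserting it; this is easy (a $0$-dimensional Witt space is a finite set of signed points), but it is the one place where Siegel's Witt-class map $w$ does not literally apply, as the paper's Proposition~\ref{prop.tauagreeswithsiegelw} is stated only in positive degrees.
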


\begin{prop}(Siegel.) \label{prop.sullivsiegelorientonmanifolds}
If $X=M$ is a
smooth compact manifold, then $\mu_M$ agrees with the Sullivan
orientation $\Delta_\SO (M)$,
\[ \mu_M = \Delta_\SO (M). \]
\end{prop}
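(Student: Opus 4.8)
The plan is to exhibit both $\Delta_\SO(M)=\Delta_{\SO*}[\id_M]$ and $\mu_M$ as the value at $[\id_M]$ of a single natural transformation of homology theories out of oriented bordism, and then to identify that transformation using Sullivan's rigidity (as already exploited in the proof of Proposition~\ref{prop.kappadeltasplisranickior}). Write $\iota\colon\Omega^\SO_*(-)\to\Omega^\SPL_*(-)\to\Omega^\Witt_*(-)$ for the evident forgetful natural transformations (a smooth manifold is a PL manifold; a PL manifold is a Witt space), and set
\[ T\;:=\;\mu^\Witt\circ\bigl(\Omega^\SPL_*(-)\to\Omega^\Witt_*(-)\bigr)\colon\ \Omega^\SPL_*(-)\longrightarrow\KO[\smlhf]_*(-). \]
Being a natural transformation of homology theories, $T$ is represented on finite CW pairs --- which suffices, since $[\id_M]$ lives over the compact space $M$ --- by a map of spectra $t\colon\MSPL\to\KO[\smlhf]$. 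On homotopy groups $t_*$ is the signature: for a closed oriented PL manifold $M^{4k}$ we have $t_*[M]=\mu^\Witt([M\to\pt])=c_*\mu_M=\sigma(M)\cdot a^k$ by the properties of $\mu^\Witt$ and $\mu_X$ recalled above, and this equals $\Delta_{\SPL*}[M]$ by (\ref{equ.sullorientonplcoeffs}).

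So $t$ and Sullivan's $\Delta_\SPL\colon\MSPL\to\KO[\smlhf]$ induce the same map on $\pi_*$. Exactly as in the proof of Proposition~\ref{prop.kappadeltasplisranickior}, this forces $t\simeq\Delta_\SPL$: the difference $\ph(t)-\ph(\Delta_\SPL)\in\bigoplus_iH^{4i}(\MSPL;\rat)$ has all of its characteristic numbers zero (the pairing of a Pontrjagin character with $\Omega^\SPL_*\otimes\rat$ records only the induced map on $\pi_*$), hence vanishes because rational cohomology classes of the finite-type spectrum $\MSPL$ are detected by characteristic numbers; moreover $\ph$ is injective on $\widetilde{\KO}(\MSPL_{(p)})$ at every odd prime (Madsen--Milgram), while the two maps agree rationally since $\KO_{(0)}$ is a wedge of Eilenberg--MacLane spectra. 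Restricting along $\MSO\to\MSPL$ and using that $\Delta_\SPL$ extends $\Delta_\SO$, we obtain $\Delta_{\SO*}=\mu^\Witt\circ\iota$ as natural transformations $\Omega^\SO_*(-)\to\KO[\smlhf]_*(-)$. Evaluating at $[\id_M]\in\Omega^\SO_n(M)$ gives
\[ \Delta_\SO(M)=\Delta_{\SO*}[\id_M]=\mu^\Witt(\iota[\id_M])=\mu_M, \]
the last equality holding because $\iota[\id_M]$ is the Witt-bordism class that $\mu^\Witt$ carries to $\mu_M$ by definition. The step deserving the most care is the one just used --- passing from "equal on $\pi_*$" to "homotopic as spectrum maps"; one must invoke representability of $T$ on finite complexes (so as to avoid phantom phenomena) together with the Madsen--Milgram rigidity, and keep the PL- and smooth-level statements carefully aligned.

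An alternative, closer to Siegel's own framework, is to verify the equality directly at the level of the Sullivan $(\sigma,\tau)$-description recalled above. Fix the codimension-$4k$ embedding $M\subset\real^m$ used to define $\mu_M$ and take $N$ to be a smooth tubular neighbourhood, so that $(N,\partial N)$ realizes the Thom space of the genuine normal vector bundle $\nu$ of $M$. By the Pontrjagin--Thom description of the fundamental class, $\Delta_\SO(M)$ is Alexander--Spanier--Whitehead dual to $\Delta_{\SO*}(u_{\SO}(\nu))=\Delta_\SO(\nu)\in\widetilde{\KO}^{4k}(\Th\nu)\otimes\ism$, the Sullivan orientation of the bundle $\nu$. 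One then checks that the pair $(\sigma_0,\tau_0)$ attached to $\Delta_\SO(\nu)$ is precisely $(\sigma_M,\tau_M)$: given $f\colon(W,\partial W)\to(N,\partial N)$ made transverse to the zero section and writing $P:=\widetilde{f}^{-1}(M)$ with normal bundle $\nu_P=(\widetilde{f}|_P)^*\nu$ in $W$, the pullback $\widetilde{f}^{*}\Delta_\SO(\nu)$ is supported near $P$ and there equals the Sullivan Thom class $\Delta_\SO(\nu_P)$; capping the bordism fundamental class of $W$ with it yields the pushforward to $W$ of $\Delta_\SO(P)=\Delta_{\SO*}[\id_P]$, and collapsing $W$ to a point then produces $\sigma(P)\cdot a^{\dim P/4}$ by (\ref{equ.sullorientoncoeffs}); hence $\sigma_0[f]=\sigma(\widetilde{f}^{-1}(M))=\sigma_M[f]$, and the same reasoning with $\intg/\ell$-coefficients gives $\tau_0=\tau_M$. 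Since elements of $\KO^{4k}(N,\partial N)\otimes\ism$ are faithfully described by such pairs, $\Delta_\SO(\nu)$ is the Alexander dual of $\mu_M$, i.e.\ $\Delta_\SO(M)=\mu_M$. On this route the crux is matching the $(\sigma,\tau)$-pair of the cohomology class $\Delta_\SO(\nu)$ with the transverse-preimage-signature recipe, which rests on naturality of the Sullivan bundle orientation under transverse pullback and on cap product with a Thom class computing the Umkehr homomorphism; this is also precisely where smoothness of $M$ enters, since it makes $\nu$ an honest vector bundle rather than merely a block bundle.
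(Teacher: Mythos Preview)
Both of your approaches are correct. Your second approach --- verifying directly that the $(\sigma,\tau)$-pair attached to the Alexander dual $\Delta_\SO(\nu)$ coincides with Siegel's $(\sigma_M,\tau_M)$ --- is exactly the paper's argument, though you have unpacked what the paper compresses into a single sentence. The paper's point is that Siegel's construction of $\mu_M$ via transverse-preimage signatures \emph{is} Sullivan's original construction of $\Delta_\SO(M)$; the two classes agree not by calculation but because the recipes are literally the same when specialized to a smooth manifold.

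Your first approach is a genuinely different route. Rather than matching the two constructions, you represent $\mu^\Witt\circ(\text{forget})$ by a spectrum map $t\colon\MSPL\to\KO[\smlhf]$, observe that $t$ and $\Delta_\SPL$ agree on homotopy groups (both send $[M^{4k}]$ to $\sigma(M)a^k$), and then invoke the Madsen--Milgram rigidity used in Proposition~\ref{prop.kappadeltasplisranickior} to conclude $t\simeq\Delta_\SPL$. This is correct and logically independent of the paper's argument, but it deploys considerably heavier machinery for what is, in the end, a tautology about two identical constructions; it also partially anticipates Proposition~\ref{prop.deltaagreeswithmuwitt}, where the paper uses the present proposition as one input to identify $\Delta_*$ with $\mu^\Witt$ in full generality. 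One minor point: your phantom-map caveat is unnecessary here, since $\mu^\Witt$ is defined as a natural transformation on all spaces and is therefore represented by a unique homotopy class of spectrum maps.
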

\begin{proof}
As pointed out by Siegel \cite[p. 1069]{siegel}, the statement
follows directly from the above construction, since it has been used by
Sullivan \cite{sullivanmit} in the case of a manifold to
construct his canonical $\KO [\smlhf]$-orientation.
(In fact, as pointed out in \cite{sullivansinginspaces} and by Siegel,
the construction applies in general to any bordism theory based
on a class $\mathcal{F}$ of PL spaces which is closed under taking
cartesian product with a PL manifold and intersecting transversely with
a closed manifold in Euclidean space, carries a bordism invariant
signature which satisfies Novikov additivity and is multiplicative
with respect to taking products with closed manifolds. Beyond 
PL manifolds and Witt spaces there exist much larger classes of 
singular spaces $\Fa$ that satisfy this, in particular the class
of stratified pseudomanifolds that admits Lagrangian structures along
strata of odd codimension, considered in \cite{banagl-mem}, 
\cite{banagl-lcl}.)
\end{proof}

\section{Ring Spectrum Level Construction of the Siegel-Sullivan Orientation}
\label{sec.ringspeclevsiegelsullivan}

Let $\MWITT$ be the Quinn spectrum associated to the ad-theory of Witt spaces,
representing Witt bordism, see Banagl-Laures-McClure \cite{blm}.
A weakly equivalent spectrum had been considered 
first by Curran in \cite{curran}. He verified that this spectrum is an
$\MSO$-module (\cite[Thm. 3.6, p. 117]{curran}).
The product of
two Witt spaces is again a Witt space. This implies essentially that
$\MWITT$ is a ring spectrum; for more details see \cite{blm}.
(There, we focused on the spectrum
$\operatorname{MIP}$ representing
bordism of integral intersection homology Poincar\'e spaces studied by
Goresky and Siegel in \cite{gorsie} and by Pardon in \cite{pardon}, 
but everything works in an analogous, indeed simpler, manner for $\rat$-Witt spaces.)
Every oriented PL manifold is a Witt space. Hence there is a map
\[ \phi_W: \MSPL \longrightarrow \MWITT, \]
which, using the methods of ad-theories and Quinn spectra
employed in \cite{blm}, can be constructed to be multiplicative.
In \cite{blm}, we constructed a map
\[ \tau: \MWITT \longrightarrow \syml (\rat). \]
(We even constructed an integral map
$\operatorname{MIP} \to \syml$.)
This map is multiplicative, i.e. a ring map, as shown
in \cite[Section 12]{blm}, and the diagram
\begin{equation} \label{equ.esigmaphifistauphiw}
\xymatrix{
\MSPL \ar[r]^{\sigma^*} \ar[d]_{\phi_W} & \syml (\intg) \ar[d] \\
\MWITT \ar[r]_\tau & \syml (\rat)
} \end{equation}
homotopy commutes, since it comes from a
commutative diagram of ad-theories under applying
the symmetric spectrum functor $\mathbf{M}$ of 
Laures and McClure \cite{lauresmcclure}. 
The localization morphism $\syml (\rat) \to \syml (\rat)[\smlhf]$
is a morphism of ring spectra. Thus the composition of $\tau$
with the localization morphism is a morphism of ring spectra
$\MWITT \to \syml (\rat)[\smlhf] = \syml (\intg)[\smlhf]$, 
which we shall also denote by $\tau$.
It was known to the experts early on that carrying out
Mishchenko's method \cite{mishchenko} with intersection chains rather than
ordinary chains would lead to an extension of the symmetric signature 
to pseudomanifolds with only even codimensional strata and, more
generally, to Witt spaces; see e.g.
\cite{weinberger}, \cite{cappshanstratifmaps},
\cite{banaglmsri}. In the context of their Witt package program
\cite{almpsigpack}, Albin, Leichtnam, Mazzeo and Piazza
applied this symmetric signature in defining a $C^*$-algebraic
Witt symmetric signature in $K_* (C^*_r \pi)$ which agrees
rationally with the index class of the signature operator
$\eth_{\operatorname{sign}}$.
In \cite{blm}, Laures, McClure and the author adopt the approach outlined in
\cite{banaglmsri} to construct the symmetric signature of
Witt and integral intersection Poincar\'e spaces:
The morphism $\tau: \MWITT \to \syml (\rat)$ of ring spectra
yields an $\syml (\rat)$-homology fundamental class for 
$n$-dimensional closed Witt spaces $X$ by setting
\[ [X]_\syml := \tau [\id_X] \in \syml (\rat)_n (X),  \]
$[\id_X] \in \Omega^\Witt_n (X)$. This fundamental class
yields the symmetric signature
\[ \sigma^* (X) = A[X]_\syml \in L^n (\rat [\pi_1 X]) \]
under the assembly map
\[ A: \syml (\rat)_n (X) \longrightarrow L^n (\rat [\pi_1 X]). \]
A detailed account of extending Mishchenko's approach to
intersection chains has been provided by Friedman and McClure
in \cite{friedmanmccluresymmsig}. By \cite[Thm. 10.12]{blm},
the above symmetric signature $\sigma^* (X)$ agrees with the
construction of Friedman-McClure.
According to \cite[Prop. 5.20]{friedmanmccluresymmsig},
the homomorphism
$L^n (\rat [\pi_1 X]) \to L^n (\rat)$ for $n=4k$ 
maps the symmetric signature $\sigma^* (X)$ to 
the Witt class $w(X)$ of the intersection form on
$IH^{\bar{m}}_{2k} (X;\rat)$. 
Under the localization homomorphism
$L^n (\rat) \to L^n (\rat)\otimes \ism,$
$w(X)$ maps to
the ordinary signature, 
$w(X)_{(\operatorname{odd})} = \sigma (X)\cdot x^k$, $n=4k$,
as in the manifold case of Remark \ref{rem.symmsigntoordsignformfds}.
The image of $[X]_\syml$ under the localization homomorphism
$\syml (\rat)_n (X) \to \syml (\rat)_n (X) \otimes \ism$
will again be denoted by $[X]_\syml$.

\begin{prop} \label{prop.tauagreeswithsiegelw}
In positive degrees, the composition
\[ \Omega^\Witt_* (\pt) \stackrel{\tau}{\longrightarrow} 
  \syml (\rat)_* (\pt) \stackrel{A}{=}
  L^* (\rat)  \]
agrees with the map $w: \Omega^\Witt_* (\pt) \to L^* (\rat)$
which sends the bordism class of a $4k$-dimensional Witt space $X$ to the 
Witt class $w(X)$ of its intersection form on
$IH^{\bar{m}}_{2k} (X;\rat)$ and is zero in degrees not divisible by $4$.
\end{prop}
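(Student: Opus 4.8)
The plan is to deduce the proposition from the two facts recorded just before it --- that $A[X]_\syml=\sigma^*(X)$ for a closed Witt space $X$, and that the map induced by a constant map on $L$-groups sends this symmetric signature to the intersection-form Witt class --- the only extra ingredient being naturality. First I would dispose of the degrees that are positive but not divisible by $4$: there $\Omega^\Witt_*(\pt)$ vanishes, by Siegel's computation of Witt bordism recalled in Section~\ref{sec.classicalsiegelsullivan}, so both $A\circ\tau$ and $w$ are the zero homomorphism and there is nothing to prove. It then remains to treat $n=4k>0$.

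So let $X$ be a closed $4k$-dimensional Witt space and $c\colon X\to\pt$ the constant map, so that $[X\to\pt]=c_*[\id_X]$ in $\Omega^\Witt_{4k}(\pt)$. Since $\tau\colon\Omega^\Witt_*(-)\to\syml(\rat)_*(-)$ is a natural transformation of homology theories,
\[ \tau[X\to\pt]=\tau\,c_*[\id_X]=c_*\,\tau[\id_X]=c_*[X]_\syml\in\syml(\rat)_{4k}(\pt). \]
Applying the assembly map $A$ and using that it is natural with respect to $c$ --- exactly the commuting square of Remark~\ref{rem.symmsigntoordsignformfds}, now with the ring $\rat$ in place of $\intg$ --- gives
\[ A\,\tau[X\to\pt]=A\,c_*[X]_\syml=c_*\,A[X]_\syml=c_*\,\sigma^*(X)\in L^{4k}(\rat). \]

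It remains to identify $c_*\,\sigma^*(X)$ with $w(X)$. By \cite[Thm.~10.12]{blm} the symmetric signature $\sigma^*(X)=A[X]_\syml$ built from $\tau$ coincides with the Friedman--McClure symmetric signature, and \cite[Prop.~5.20]{friedmanmccluresymmsig} states that the homomorphism $L^{4k}(\rat[\pi_1 X])\to L^{4k}(\rat)$ induced by $c$ carries it to the Witt class $w(X)$ of the intersection form on $IH^{\bar{m}}_{2k}(X;\rat)$. Hence $A\,\tau[X\to\pt]=w(X)$, which is the claim. The argument is essentially formal, so there is no serious obstacle here; the points that require attention are only that the naturality square for the assembly map is available over $\rat$ (it is, by the same reasoning as in the integral case) and that the fundamental class $[X]_\syml$ appearing in \cite{blm} is the one whose assembly Friedman and McClure compute --- which is precisely the content of \cite[Thm.~10.12]{blm}.
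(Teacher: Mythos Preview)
Your proof is correct and follows essentially the same route as the paper: dispose of degrees not divisible by $4$ via Siegel's computation, then for $[X]\in\Omega^\Witt_{4k}(\pt)$ use naturality of $\tau$ and of the assembly map with respect to the constant map $c\colon X\to\pt$ to reduce $A\tau[X]$ to $c_*\sigma^*(X)$, which equals $w(X)$ by the Friedman--McClure result already cited before the proposition.
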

\begin{proof}
The group $\Omega^\Witt_i (\pt)$ is zero in 
degrees that are not divisible by $4$.
Thus $A\circ \tau$ agrees trivially with $w$ in such degrees.
Let $[X] \in \Omega^\Witt_{4k} (\pt),$ $k>0,$ be any element.
Let $c:X\to \pt$ denote the constant map and consider the diagram
\begin{equation} \label{dia.constmapswitt} 
\xymatrix{
\Omega^\Witt_{4k} (X) \ar[r]^{\tau} \ar[d]_{c_*} &
 \syml (\rat)_{4k} (X) \ar[r]^A \ar[d]_{c_*} &
   L^{4k} (\rat [\pi_1 X]) \ar[d]_{c_*} \\
\Omega^\Witt_{4k} (\pt) \ar[r]_{\tau} &
 \syml (\rat)_{4k} (\pt) \ar@{=}[r]_A &
   L^{4k} (\rat).   
} \end{equation}
Since both $\tau$ and the assembly map are natural,
the diagram commutes.
The right hand vertical map sends the symmetric signature
$\sigma^* (X) \in L^{4k} (\rat [\pi_1 X])$ 
to $w(X)$. Therefore,
\begin{align*}
A\tau [X]
&= A\tau c_* [\id_X] = c_* A \tau [\id_X] \\
&= c_* A [X]_\syml = c_* \sigma^* (X) = w(X).
\end{align*}
\end{proof}

The morphism $\tau: \MWITT \to \syml (\rat)$ is not an equivalence.
One reason is that $\MWITT$ is connective whereas $\syml (\rat)$ is periodic.
Let $t_{\geq m} E \to E$ be the
$(m-1)$-connective cover of a spectrum $E$.
A morphism $\phi: E\to F$ of spectra 
lifts, uniquely up to homotopy, to a morphism
$t_{\geq m} \phi: t_{\geq m} E \to t_{\geq m} F$.
The lift $t_{\geq 0} \tau$ to the connective cover 
$t_{\geq 0} \syml (\rat)$ is still no equivalence,
as $\pi_0 \MWITT = \Omega^\Witt_0 (\pt) = \intg$, while
$\pi_0 \syml (\rat) = L^0 (\rat)$ contains an infinitely generated
amount of $2$-primary torsion.
Degree zero is, however, the only offending nonnegative degree:
\begin{thm}
The lift $t_{\geq 1} \tau: 
t_{\geq 1} \MWITT \to t_{\geq 1} \syml (\rat)$
is a weak equivalence.
\end{thm}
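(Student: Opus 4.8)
The plan is to reduce the assertion to a statement about homotopy groups and then combine Proposition \ref{prop.tauagreeswithsiegelw} with Siegel's computation of the Witt bordism coefficients. Since the $(m-1)$-connective cover functor leaves $\pi_n$ unchanged for $n\geq m$ and kills $\pi_n$ for $n<m$, the map $t_{\geq 1}\tau\colon t_{\geq 1}\MWITT \to t_{\geq 1}\syml(\rat)$ is a weak equivalence if and only if $\tau_*\colon \pi_n\MWITT \to \pi_n\syml(\rat)$ is an isomorphism for every $n\geq 1$. Under the identifications $\pi_n\MWITT = \Omega^\Witt_n(\pt)$ (as $\MWITT$ represents Witt bordism) and $\pi_n\syml(\rat) = \syml(\rat)_n(\pt)=L^n(\rat)$, this is the homomorphism induced by $\tau$ on coefficients.

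First I would dispatch the degrees $n$ that are not divisible by $4$. By Siegel's computation of $\Omega^\Witt_*(\pt)$, the group $\Omega^\Witt_n(\pt)$ vanishes unless $n$ is a nonnegative multiple of $4$; and, as recalled above, $\syml_n(\rat)=0$ unless $4\mid n$. Hence for every $n\geq 1$ with $4\nmid n$ both source and target vanish and $\tau_*$ is trivially an isomorphism. Only the degrees $n=4k$ with $k\geq 1$ then remain.

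In degree $4k$ with $k\geq 1$ I would factor $\tau_*$ through the assembly map for a point. By Proposition \ref{prop.tauagreeswithsiegelw}, the composite
\[ \Omega^\Witt_{4k}(\pt) \xrightarrow{\tau} \syml(\rat)_{4k}(\pt) \xrightarrow{A} L^{4k}(\rat) \]
equals the Witt-class homomorphism $w$, sending the bordism class of a closed $4k$-dimensional Witt space $X$ to the Witt class of the intersection form on $IH^{\bar{m}}_{2k}(X;\rat)$. For the one-point space the assembly map $A\colon \syml(\rat)_{4k}(\pt)\to L^{4k}(\rat[\pi_1\pt])=L^{4k}(\rat)$ is the canonical identification of the homotopy of the symmetric $L$-theory spectrum with the symmetric $L$-groups, hence an isomorphism (this is the ``$=$'' over $A$ in the diagram of Proposition \ref{prop.tauagreeswithsiegelw}). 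On the other hand, Siegel's computation of $\Omega^\Witt_{4k}(\pt)$ is realized precisely by $w$: for $k>0$ the homomorphism $w\colon \Omega^\Witt_{4k}(\pt)\to L^{4k}(\rat)=W(\rat)$ is an isomorphism. Consequently $\tau_*=A^{-1}\circ w$ is an isomorphism in every degree $4k$ with $k\geq 1$.

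Combining the two cases shows $\tau_*$ is an isomorphism in all positive degrees, so $t_{\geq 1}\tau$ is a weak equivalence. The step I expect to carry the real content is the identification, already packaged in Proposition \ref{prop.tauagreeswithsiegelw}, of $A\circ\tau$ with Siegel's map $w$ on coefficients; once that naturality and bookkeeping are in hand, the theorem follows from Siegel's integral coefficient computation together with the triviality of $\pi_1(\pt)$. I would also flag that this argument genuinely uses the \emph{integral} forms of Proposition \ref{prop.tauagreeswithsiegelw} and of Siegel's computation: the localized equivalence of Theorem \ref{thm.siegelconnkoiswittbord} would only give that $t_{\geq 1}\tau$ becomes an equivalence after inverting $2$, which is weaker than what is claimed here.
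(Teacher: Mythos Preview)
Your proposal is correct and follows essentially the same route as the paper: reduce to homotopy groups, invoke Proposition \ref{prop.tauagreeswithsiegelw} to identify $A\circ\tau$ with Siegel's Witt-class map $w$ on coefficients (using that $A$ is the identity over a point), and conclude via Siegel's computation that $w$ is an isomorphism in positive degrees. The paper's proof is simply a terser packaging of the same argument, and your remark distinguishing the integral statement from the $\ism$-localized one is a worthwhile clarification.
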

\begin{proof}
By Proposition \ref{prop.tauagreeswithsiegelw},
the diagram
\[ \xymatrix{
\pi_i (t_{\geq 1} \MWITT) \ar[r]^{t_{\geq 1} \tau} \ar[d]_\cong &
  \pi_i (t_{\geq 1} \syml (\rat)) \ar[d]^\cong \\ 
\pi_i (\MWITT) \ar[r]^\tau \ar[rd]_w &
  \pi_i (\syml (\rat)) \ar@{=}[d]^A \\   
 & L^i (\rat)
} \]
commutes for $i\geq 1$.
Siegel's Witt bordism calculation \cite[Prop 1.1, p. 1098]{siegel}
asserts that $w$ is an isomorphism.
\end{proof}

The central construction of the present paper is the following lift of the
Siegel-Sullivan orientation introduced in \cite{siegel} to the ring-spectrum
level.
\begin{defn}
The \emph{ring-spectrum level Siegel-Sullivan orientation}
\[ \Delta: \MWITT \longrightarrow \KO [\smlhf] \]
is the morphism of ring spectra given by the composition
\[ \MWITT \stackrel{\tau}{\longrightarrow} 
  \syml (\rat)[\smlhf] = \syml (\real)[\smlhf] 
   \stackrel{\kappa^{-1}}{\simeq} \KO [\smlhf]  \]
of the Witt-orientation introduced by
Laures, McClure and the author
with a ring equivalence $\kappa^{-1}$
inverse to the ring equivalence $\kappa$ of
Proposition \ref{prop.einftyequivkol}.
\end{defn}
Diagram (\ref{equ.esigmaphifistauphiw}) then embeds into the
homotopy commutative diagram
\[
\xymatrix{
\MSPL \ar@/^2pc/[rr]^\Delta
\ar[r]^{\sigma^*} \ar[d]_{\phi_W} & \syml (\intg)[\smlhf] \ar[d]^\simeq 
  & \KO [\smlhf] \ar[l]^\kappa_\simeq \ar@{=}[d] \\
\MWITT \ar@/_2pc/[rr]_\Delta
\ar[r]_\tau & \syml (\rat)[\smlhf]
  & \KO [\smlhf]. \ar[l]^\simeq
} \]
Thus the ring-spectrum level Siegel-Sullivan orientation
$\Delta: \MWITT \to \KO [\smlhf]$ restricts under 
$\phi_W: \MSPL \to \MWITT$ to the
Sullivan orientation 
\[ \MSPL \stackrel{\Delta}{\longrightarrow} \KO [\smlhf] \]
of Definition \ref{def.sullorientringmap}.
In order to describe the induced map 
$\Delta_*: \MWITT_* \to \KO [\smlhf]_*$ on coefficients,
we shall employ the symmetric signature of Witt spaces.
We observe that (\ref{equ.sullorientonplcoeffs}) extends from the 
manifold case to Witt spaces:

\begin{prop} \label{prop.speclevelsiegelsullonhtpygrps}
The ring-spectrum level Siegel-Sullivan orientation $\Delta$
induces on homotopy groups the homomorphism
\[ \Delta_*: \Omega^\Witt_{4k} = \MWITT_{4k} \longrightarrow
   \KO [\smlhf]_{4k} = \intg [\smlhf] \langle a^k \rangle \]
given by
\begin{equation} \label{equ.siegelsullorientoncoeffs} 
\Delta_* [X^{4k}] = \sigma (X)\cdot a^k,
\end{equation}
where $\sigma (X)$ is the signature of the intersection form
on the intersection homology groups $IH_{2k} (X;\rat)$ of $X$.
\end{prop}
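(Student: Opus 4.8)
The plan is to reduce the computation of $\Delta_*$ on $\Omega^\Witt_{4k}$ to the already-established behavior of the three constituent maps $\tau$, the identification $\syml(\rat)[\smlhf] = \syml(\real)[\smlhf]$, and the ring equivalence $\kappa$. By definition, $\Delta = \kappa^{-1} \circ \tau$ (after passing to $\syml(\rat)[\smlhf]$), so on homotopy groups $\Delta_* = \kappa_*^{-1} \circ \tau_*$. Thus it suffices to trace through where a Witt bordism class $[X^{4k}]$ goes under each step.

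First I would apply Proposition \ref{prop.tauagreeswithsiegelw}: the composition $A \circ \tau : \Omega^\Witt_{4k}(\pt) \to L^{4k}(\rat)$ sends $[X]$ to the Witt class $w(X)$ of the intersection form on $IH^{\bar{m}}_{2k}(X;\rat)$, for $k > 0$. (The degree-zero case is handled separately by the unit, where $\Delta_*$ sends the class of a point to $1 = a^0$, matching $\sigma(\pt) = 1$; one should note this so the statement is literally correct as written, though the interesting content is $k > 0$.) Next, passing to $\syml(\rat)[\smlhf]$, the discussion preceding Proposition \ref{prop.tauagreeswithsiegelw} records that $w(X)$ maps to $w(X)_{(\operatorname{odd})} = \sigma(X)\cdot x^k$ under $L^{4k}(\rat) \to L^{4k}(\rat)\otimes\ism = \syml(\rat)[\smlhf]_{4k}$, exactly as in the manifold case of Remark \ref{rem.symmsigntoordsignformfds}. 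Under the identification $\syml(\rat)[\smlhf] = \syml(\real)[\smlhf] = \syml(\intg)[\smlhf]$, the element $x \in \syml_4(\intg)$ is the signature $1$ element, so $\tau_* [X]$ corresponds to $\sigma(X) \cdot x^k \in \syml(\intg)[\smlhf]_{4k}$.

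Finally I would apply Proposition \ref{prop.einftyequivkol}: the ring equivalence $\kappa$ induces the ring isomorphism $\intg[\smlhf][a^{\pm 1}] \to \intg[\smlhf][x^{\pm 1}]$ with $a \mapsto x$, hence $a^k \mapsto x^k$. Therefore $\kappa_*^{-1}$ sends $\sigma(X)\cdot x^k$ back to $\sigma(X)\cdot a^k$, giving
\[
\Delta_*[X^{4k}] = \kappa_*^{-1}\tau_*[X^{4k}] = \sigma(X)\cdot a^k,
\]
which is the claimed formula \eqref{equ.siegelsullorientoncoeffs}. One additional point worth spelling out is the compatibility with the constant map $c: X \to \pt$, so that the signature of the intersection form is genuinely the relevant invariant: this is the Witt-space analogue of Remark \ref{rem.symmsigntoordsignformfds}, and it follows from naturality of $\tau$ and of the assembly map $A$ together with $c_*\sigma^*(X) = w(X)$ in $L^{4k}(\rat)$.

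The only genuinely nontrivial input is Proposition \ref{prop.tauagreeswithsiegelw} together with the translation of $w(X)$ to $\sigma(X)\cdot x^k$ after inverting $2$; but both are already in hand in the excerpt, so the main obstacle here is purely bookkeeping — keeping straight the three distinct but canonically identified models $\syml(\intg)[\smlhf]$, $\syml(\rat)[\smlhf]$, $\syml(\real)[\smlhf]$ and ensuring the generator $x$ is tracked consistently through all of them. No deeper idea is required; everything reduces to the spectrum-level definition of $\Delta$ and the homotopy-group computations recorded earlier.
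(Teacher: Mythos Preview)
Your proposal is correct and follows essentially the same approach as the paper: both compute $\kappa_*\Delta_*[X]=\tau_*[X]$, identify this with $w(X)_{(\operatorname{odd})}=\sigma(X)\cdot x^k$ via the naturality of $\tau$ and the assembly map along the constant map $c:X\to\pt$ (the content of Proposition~\ref{prop.tauagreeswithsiegelw}, localized), and then apply $\kappa_*^{-1}(x^k)=a^k$ from Proposition~\ref{prop.einftyequivkol}. The paper writes out the localized commutative diagram explicitly rather than citing Proposition~\ref{prop.tauagreeswithsiegelw}, but this is a difference in presentation only.
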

\begin{proof}
Let $[X] \in \Omega^\Witt_{4k}$ be any element.
The constant map $c:X\to \pt$ induces a commutative diagram
\[ \xymatrix{
\Omega^\Witt_n (X) \ar[r]^>>>>>{\tau} \ar[d]_{c_*} &
 \syml (\rat)_n (X) \otimes \ism \ar[r]^{A [\frac{1}{2}]} \ar[d]_{c_*} &
   L^n (\rat [\pi_1 X]) \otimes \ism \ar[d]_{c_*} \\
\Omega^\Witt_n (\pt) \ar[r]_>>>>>{\tau} &
 \syml (\rat)_n (\pt) \otimes \ism \ar@{=}[r]_{A [\frac{1}{2}]} &
   L^n (\rat) \otimes \ism.   
} \]
This is quite similar to Diagram (\ref{dia.constmapswitt}), except that 
here we map into L-theory away from $2$.
The claim is established by the calculation
\begin{align*}
\kappa_* \Delta_* [X]
&= \tau_* [X] = \tau_* c_* [\id_X] 
  = c_* \tau_* [\id_X] = c_* [X]_\syml \\
&= c_* (A[\smlhf]) [X]_\syml = c_* \sigma^* (X)_{(\operatorname{odd})} 
   = w(X)_{(\operatorname{odd})}
   = \sigma (X) \cdot x^k,
\end{align*}
so that
\[ \Delta_* [X] = \sigma (X) \cdot \kappa^{-1}_* (x^k) =
   \sigma (X) \cdot a^k.  \]
\end{proof}

The ring-spectrum level Siegel-Sullivan orientation allows in particular for
the following description of the
Siegel-Sullivan orientation class of a Witt space:
\begin{defn}
The \emph{Siegel-Sullivan orientation class} of a compact $n$-dimensional Witt space
$(X,\partial X)$ is given by the image
\[ \Delta (X) := \Delta_* [\id_X] \in \KO_n (X,\partial X)\otimes \intg [\smlhf]  \]
of the Witt bordism class of the identity on $X$ under the
homomorphism
$\Delta_*: \Omega^\Witt_n (X,\partial X) \to (\KO [\smlhf])_n (X,\partial X)$
induced by the ring-spectrum level Siegel-Sullivan orientation $\Delta$.
\end{defn} 
We will see in Proposition \ref{prop.deltaagreeswithmuwitt} below that 
this terminology is justified, i.e. that $\Delta (X) = \mu_X$.
By our construction,
\begin{equation} \label{equ.kappadeltaissymlor}
\kappa_* \Delta (X) = \kappa_* \Delta_* [\id_X] = \tau [\id_X]
   = [X]_\syml \in (\syml \rat [\smlhf])_n (X,\partial X). 
\end{equation}
Since $\Delta: \MWITT \to \KO [\smlhf]$ restricts to the Sullivan orientation
$\Delta: \MSPL \to \KO [\smlhf]$, the Siegel-Sullivan orientation
$\Delta (X)$ agrees with the Sullivan orientation 
$\Delta_\SPL (X)$ when $X$ is a PL-manifold.
As a final point of business in setting up the
spectrum level Siegel-Sullivan orientation, we shall verify that the
induced transformation of homology theories agrees with the classical
construction as given by Siegel and outlined in 
Section \ref{sec.classicalsiegelsullivan}.
The argument is based on a result (\cite[Prop. 2, p. 597]{banaglcappshan})
of Cappell, Shaneson and the author which shows that at odd primes,
Witt bordism classes are representable by smooth oriented bordism classes.
Let us review this result briefly.

For an integer $j$, let $\bar{j}$ denote its residue class in $\intg/_4$.
Using $4$-fold periodicity, we may view $\KO \smlhf$-homology as
$\intg/_4$-graded. On the groups 
$C_{\bar{j}} (X,Y) := \bigoplus_{k\in \intg} \Omega^\SO_{j+4k} (X,Y) \otimes \ism$, 
define an equivalence relation by
\[
 [M^{j+4k} \times N^{4i} \stackrel{\proj}{\longrightarrow} M 
      \stackrel{f}{\longrightarrow} X]
 \sim
  \sigma (N) \cdot [M^{j+4k} \stackrel{f}{\longrightarrow} X],
\]
where $\sigma (N)$ is the signature of the manifold $N$.
(See also \cite[p. 193]{krecklueck}.)
The periodicity relations (\ref{equ.sigmatauperiodicityrelations}) imply that
Sullivan's orientation $\Delta_{\SO *}$ induces a well-defined homomorphism
\begin{equation} \label{equ.sullivanconnfloydiso} 
\Delta_{\SO *}: Q_{\bar{j}} (X,Y) \longrightarrow 
  (\KO \smlhf)_{\bar{j}} (X,Y) 
\end{equation}  
on the quotient $Q_{\bar{j}} (X,Y) := C_{\bar{j}} (X,Y)/\sim$.
This is a natural transformation of functors, and
Sullivan proves that it is an isomorphism for compact PL pairs $(X,Y)$
(\cite{sullivanmit}, \cite[4.15, p. 85]{madsenmilgram}).
This shows in particular that $(X,Y)\mapsto Q_{\bar{j}} (X,Y)$
is a ($\intg/_4$-graded) homology theory on compact PL pairs.
Let $Z$ denote the ring $Z=\ism$.
The Laurent polynomial ring $Z[t,t^{-1}]$
is a $\intg$-graded ring with $\deg (t)=4$.
There is a canonical subring inclusion
$Z[t] \subset Z[t,t^{-1}],~ t \mapsto t.$
Via this inclusion, $Z[t,t^{-1}]$ becomes a $Z[t]$-module and
Panov observes in \cite{panov} that this module is flat.
As the connective spectrum $\ko \smlhf$ is a ring spectrum,
$(\ko \smlhf)_* (X)$ is in particular a right $(\ko \smlhf)_* (\pt)=Z[a]$-module,
where $a\in (\ko \smlhf)_4 (\pt) = Z$ is the generator
which complexifies to the square of the complex Bott element,
as in Section \ref{sec.multidentkol}.
For periodic $\KO$, we have
$(\KO \smlhf)_* (\pt) = Z[a,a^{-1}],$
and the canonical map
$(\ko \smlhf)_* (\pt) \to
    (\KO \smlhf)_* (\pt)$
is given by the inclusion
$Z[a] \subset Z[a,a^{-1}],~ a\mapsto a.$
Using the isomorphism $Z[t] \cong Z[a],~ t \mapsto a,$
$(\ko \smlhf)_* (X)$ and $(\KO \smlhf)_* (X)$ become right $Z[t]$-modules.
We may therefore form the tensor product of $Z[t]$-modules
\[ (\ko \smlhf)_* (X) \otimes_{Z[t]} Z[t,t^{-1}]. \]
Since the functor $- \otimes_{Z[t]} Z[t,t^{-1}]$ is exact
by Panov's observation, the functor
\[ (X,Y) \mapsto (\ko \smlhf)_* (X,Y) \otimes_{Z[t]} Z[t,t^{-1}] \]
is exact and thus a homology theory (on CW pairs $(X,Y)$), 
which is $\intg$-graded by
$\deg (x \otimes_{Z[t]} rt^k) = n+4k,$
$x\in (\ko \smlhf)_n (X,Y),$ $r\in Z$.
Now consider $\KO_* (-)$ as a $\intg$-graded
(not $\intg/_4$-graded) theory.
The connective cover
$(\ko \smlhf)_* (-) \to (\KO \smlhf)_* (-)$ is $Z[t]$-linear and hence
induces a natural transformation 
\[ \Phi: (\ko \smlhf)_* (-) \otimes_{Z[t]} Z[t,t^{-1}]
  \longrightarrow
  (\KO \smlhf)_* (-) \]
of $\intg$-graded homology theories. 
On a point, $\Phi$ is the identity map. Thus $\Phi$ is a
natural equivalence of homology theories.
This shows how to reconstruct periodic $\KO \smlhf$-homology
from connective $\ko \smlhf$-homology and the action of $Z[t]$ on it.
Similarly, we can turn Witt bordism, which is a connective theory, into a 
periodic version:
Denote Witt bordism away from $2$ by
\[ W_* (X) := \Omega^\Witt_* (X)\otimes \ism. \]
This is a $\intg$-graded homology theory with coefficients
\[ W_* (\pt) = Z[c],~ c:= [\cplx P^2]\otimes 1 
       \in W_4 (\pt). \]
(After inverting $2$, only the signature survives as an
invariant; the $2$- and $4$-torsion is killed.)
The $\intg$-graded abelian group $W_* (X)$ is a right module over the ring $W_* (\pt)$
as usual.
The ring isomorphism
$Z[t] \to W_* (\pt)$
induced by
$t \mapsto c\otimes 1 \in W_4 (\pt)$
makes $W_* (X)$ into a right $Z[t]$-module.
We may thus form the tensor product
\[ \overline{W}_* (X) := W_* (X) \otimes_{Z[t]} Z[t,t^{-1}], \]
which is $\intg$-graded by
$\deg (x \otimes_{Z[t]} rt^k) = n+4k,$
$x\in W_n (X),$ $r\in Z.$
Panov's observation shows that $\overline{W}_* (-)$ is a
homology theory.
It is naturally a right $Z[t,t^{-1}]$-module and right multiplication
with $t$ is an isomorphism with inverse given by right multiplication
with $t^{-1}$. This shows that $\overline{W}_* (-)$ is $4$-periodic
so that we may call it \emph{periodic Witt-bordism at odd primes}. 
The inclusion $Z[t] \subset Z[t,t^{-1}]$ induces a natural map
\[ i_*: W_* (X) = W_* (X) \otimes_{Z[t]} Z[t] \longrightarrow
   \overline{W}_* (X). \]
Again, the periodicity relations (\ref{equ.sigmatauperiodicityrelations}) imply
that
\begin{equation} \label{equ.muwittperiodicity}
\mu^\Witt ([f:V^{j-4k} \to X]\cdot [M^{4k}]) = 
      (\mu^\Witt [f]) \cdot \sigma (M) a^k 
      \in (\ko \smlhf)_j (X),
\end{equation}  
$[f] \in \Omega^\Witt_{j-4k} (X),$ $[M]\in \Omega^\SO_{4k} (\pt)$,
which shows that
$\mu^\Witt$ is a homomorphism of $Z[t]$-modules, as is $\Delta_{\SO *}$
in the manifold case.
Tensoring over $Z[t]$ with $Z[t,t^{-1}]$, we get a natural isomorphism
\[ \mu^\Witt \otimes_{Z[t]} \id: 
   \overline{W}_* (X) = W_* (X) \otimes_{Z[t]} Z[t,t^{-1}]
   \stackrel{\cong}{\longrightarrow}
   (\ko \smlhf)_* (X) \otimes_{Z[t]} Z[t,t^{-1}]
    \]
of $\intg$-graded homology theories by
Siegel's Theorem \ref{thm.siegelconnkoiswittbord}.
For any $j\in \intg$, a well-defined map
\[  \omega: Q_{\bar{j}} (X) \otimes Z
   \longrightarrow (W_* (X) \otimes_{Z[t]} Z[t, t^{-1}])_j    \]
is given by setting
\[   \omega ([g: M^{j-4k} \to X] \otimes_{\intg} r)
   := [g] \otimes_{Z[t]} rt^k  
     \in W_{j-4k} (X) \otimes_{Z[t]} Z \langle t^k \rangle,~ 
     k \in \intg,~ r \in Z,   \]
where one views the closed oriented smooth manifold $M$
as a Witt space via its canonical PL structure.
On compact PL spaces $X$, the diagram
\[ \xymatrix@C=80pt{
Q_{\bar{j}} (X)\otimes Z \ar[r]^\cong_{\Delta_{\SO *}} \ar[d]_\omega
  & (\KO \smlhf)_{\bar{j}} (X) \\
  \overline{W}_j (X)
   \ar[r]^\cong_{\mu^\Witt \otimes_{Z[t]} \id} &
((\ko \smlhf)_* (X) \otimes_{Z[t]} Z[t,t^{-1}])_j     
  \ar[u]_\cong^\Phi
} \]
commutes for every $j\in \intg$ 
by Proposition \ref{prop.sullivsiegelorientonmanifolds}. 
In particular, $\omega$ is an isomorphism, from
which representability of Witt bordism classes by smooth manifolds,
away from $2$, can be deduced. The following consequence will
be used in the proof of Proposition \ref{prop.deltaagreeswithmuwitt}:

\begin{prop} \label{prop.conntoperiodicfactors}
1. Given a $Z[t]$-linear map
$\alpha_*: W_* (X) \to (\KO \smlhf)_* (X),$
there exists a unique extension of $\alpha_*$ to a homomorphism 
\[ \overline{\alpha}_*: \overline{W}_* (X) \longrightarrow
    (\KO \smlhf)_* (X)  \]
of $Z[t,t^{-1}]$-modules. If $\alpha_*$ is a natural transformation of
homology theories, then so is $\overline{\alpha}_*$.

2. Let $\alpha_*, \beta_*: W_* (X) \to (\KO \smlhf)_* (X)$
be $Z[t]$-linear natural transformations of homology theories
on compact PL $X$. If 
$\alpha_* ([g:M \to X]\otimes 1) = \beta_* ([g]\otimes 1)$ for
every $g$ on smooth manifolds $M$, then 
$\alpha_* = \beta_*$ on $W_* (X)$, and
$\overline{\alpha}_* = \overline{\beta}_*$
on $\overline{W}_* (X)$ for their periodic versions.
\end{prop}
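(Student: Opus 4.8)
The plan is to prove both parts of Proposition \ref{prop.conntoperiodicfactors} by the standard ``extend along flatness, then rigidify by naturality'' technique, using that $Z[t,t^{-1}]$ is a flat (indeed free) $Z[t]$-module.

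\smallskip

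\emph{Part 1.} For the existence of $\overline{\alpha}_*$, I would first observe that $(\KO \smlhf)_* (X)$ is itself a $Z[t,t^{-1}]$-module, since periodic $\KO$-homology is $4$-periodic and the periodicity operator is multiplication by $a^{\pm 1}$ (and $t$ acts as $a$ under the identification $Z[t]\cong Z[a]$). Thus by the universal property of base change, the $Z[t]$-linear map $\alpha_*: W_*(X) \to (\KO \smlhf)_*(X)$ extends to the $Z[t,t^{-1}]$-linear map
\[
\overline{\alpha}_* := \alpha_* \otimes_{Z[t]} \id :
 \overline{W}_*(X) = W_*(X) \otimes_{Z[t]} Z[t,t^{-1}]
 \longrightarrow
 (\KO \smlhf)_*(X) \otimes_{Z[t]} Z[t,t^{-1}] = (\KO \smlhf)_*(X),
\]
where the last identification is multiplication, which is an isomorphism precisely because the target is already a $Z[t,t^{-1}]$-module. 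Explicitly $\overline{\alpha}_*(x \otimes_{Z[t]} rt^k) = \alpha_*(x)\cdot r a^k$. Uniqueness is forced: the canonical map $i_*: W_*(X) \to \overline{W}_*(X)$ has image generating $\overline{W}_*(X)$ as a $Z[t,t^{-1}]$-module, so any $Z[t,t^{-1}]$-linear extension of $\alpha_*$ is determined on $i_*(W_*(X))$ and hence everywhere. For the last sentence of Part 1, if $\alpha_*$ is a natural transformation of homology theories, then so is $\alpha_* \otimes_{Z[t]} \id$, since $- \otimes_{Z[t]} Z[t,t^{-1}]$ is an exact functor (Panov's observation, already invoked in the excerpt) applied levelwise to the morphism of long exact sequences defining a natural transformation; naturality in $X$ is immediate from naturality of $\alpha_*$ and functoriality of the tensor product.

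\smallskip

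\emph{Part 2.} Here I would run a comparison of homology theories against the ``smooth representability'' input already assembled in the excerpt, namely the isomorphism $\omega: Q_{\bar\jmath}(X)\otimes Z \stackrel{\cong}{\to} \overline{W}_j(X)$ and the fact that $Q_{\bar\jmath}(-)$ is a homology theory on compact PL pairs. Given two $Z[t]$-linear natural transformations $\alpha_*, \beta_*: W_*(-) \to (\KO \smlhf)_*(-)$ agreeing on classes $[g:M\to X]\otimes 1$ of smooth manifolds, I would show $\overline{\alpha}_* = \overline{\beta}_*$ first, then restrict. By $Z[t,t^{-1}]$-linearity, $\overline{\alpha}_*$ and $\overline{\beta}_*$ agree on every element of the form $[g:M^{j-4k}\to X] \otimes_{Z[t]} rt^k$ with $M$ smooth: indeed $\overline{\alpha}_*([g]\otimes_{Z[t]} rt^k) = \overline{\alpha}_*((i_* [g])\cdot rt^k) = \alpha_*([g]\otimes 1)\cdot ra^k = \beta_*([g]\otimes 1)\cdot ra^k = \overline{\beta}_*([g]\otimes_{Z[t]} rt^k)$. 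Since $\omega$ is surjective, such elements span $\overline{W}_j(X)$, so $\overline{\alpha}_* = \overline{\beta}_*$ on all of $\overline{W}_*(X)$. Finally, $i_*: W_*(X)\to \overline{W}_*(X)$ is split injective (right multiplication by $t^{-k}$ followed by projection to the $t^0$-summand is a retraction, using $W_*(X) = W_*(X)\otimes_{Z[t]} Z\langle t^0\rangle$ is a direct summand of $\overline{W}_*(X)$), and $\overline{\alpha}_* \circ i_* = \alpha_*$, $\overline{\beta}_* \circ i_* = \beta_*$; hence $\alpha_* = \beta_*$ on $W_*(X)$.

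\smallskip

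\emph{Main obstacle.} The one genuine point to be careful about is surjectivity of $\omega$ onto $\overline{W}_j(X)$ for \emph{all} $j$ and compact PL $X$: this is exactly the ``representability of Witt bordism classes by smooth manifolds away from $2$'' alluded to just before the proposition, and it rests on the commutative square involving $\Delta_{\SO *}$, $\mu^\Witt$, $\Phi$, and Proposition \ref{prop.sullivsiegelorientonmanifolds}. I would make sure to state that square and conclude $\omega$ is an isomorphism (not merely a map) before using surjectivity in Part 2 --- since all three other maps in that square are isomorphisms, $\omega$ is forced to be one. Everything else --- flatness, the universal property of base change, split injectivity of $i_*$ --- is formal module theory and will not require computation.
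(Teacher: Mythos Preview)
Your approach is essentially the same as the paper's: both parts use the explicit formula $\overline{\alpha}_*([f]\otimes_{Z[t]} p) = (\alpha_*[f])\cdot p$ for the extension, and Part~2 proceeds by first proving $\overline{\alpha}_* = \overline{\beta}_*$ on $\overline{W}_*(X)$ via the surjectivity of $\omega$, then deducing $\alpha_* = \beta_*$ from the factorization $\alpha_* = \overline{\alpha}_* \circ i_*$.

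There is one error worth flagging. Your claim that $i_*: W_*(X) \to \overline{W}_*(X)$ is split injective, with $W_*(X)\otimes_{Z[t]} Z\langle t^0\rangle$ a direct summand of $\overline{W}_*(X)$, is not correct in general and is in any case unnecessary. The tensor product $W_*(X)\otimes_{Z[t]} Z[t,t^{-1}]$ is the localization $W_*(X)[t^{-1}]$, not a direct sum $\bigoplus_k W_*(X)\otimes_{Z[t]} Z\langle t^k\rangle$; there is no ``projection to the $t^0$-summand.'' Fortunately you do not need any injectivity of $i_*$ at all: once $\overline{\alpha}_* = \overline{\beta}_*$ is established, the identities $\overline{\alpha}_* \circ i_* = \alpha_*$ and $\overline{\beta}_* \circ i_* = \beta_*$ (which you already recorded) immediately give $\alpha_* = \beta_*$. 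The paper's proof does exactly this. Simply delete the split-injectivity sentence and your argument is complete.
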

\begin{proof}
We prove statement 1:
Since $\alpha_*$ is $Z[t]$-linear, it induces
a map
\[ \overline{\alpha}_*:
   (W_* (X) \otimes_{Z[t]} Z[t,t^{-1}])_j \longrightarrow
    (\KO \smlhf)_{\bar{j}} (X),~ j\in \intg, \]
by setting, for $p\in Z[t,t^{-1}],$    
\[ \overline{\alpha}_* ([f] \otimes_{Z[t]} p) 
     := (\alpha_* [f])\cdot p, \] 
where on the right hand side, we interpret $p$ as an element
of $Z[a,a^{-1}]$ by substituting $t \mapsto a$.     
Then $\overline{\alpha}_*$ is $Z[t,t^{-1}]$-linear, and
the diagram 
\begin{equation} \label{equ.overalphaextendsalpha}
\xymatrix@C=50pt@R=40pt{
W_* (X) \ar[rd]^{\alpha_*} \ar[d]_{i_*} 
   &  \\
\overline{W}_* (X) \ar[r]_{\overline{\alpha}_*} & (\KO \smlhf)_* (X)
} 
\end{equation}
commutes. 

We turn to the proof of uniqueness.
Suppose that $\beta_*: \overline{W}_* (X) \to (\KO \smlhf)_* (X)$
is any $Z[t,t^{-1}]$-linear extension of $\alpha_*$,
i.e. $\beta_* \circ i_* = \alpha_*$. Then
\begin{align*}
\beta_* ([f] \otimes_{Z[t]} p)
&= \beta_* ([f] \otimes_{Z[t]} (1 \cdot p)) 
 = \beta_* (([f] \otimes_{Z[t]} 1) \cdot p) \\
&= (\beta_* ([f] \otimes_{Z[t]} 1)) \cdot p 
  = (\beta_* i_* [f]) \cdot p \\
&= \alpha_* [f] \cdot p 
  = \overline{\alpha}_* ([f] \otimes_{Z[t]} p).
\end{align*}
Hence $\overline{\alpha}_*$ is unique.
If $\alpha_*$ is natural in $X$ and commutes with
suspension isomorphisms, then $\overline{\alpha}_*$ inherits these
properties.

We prove statement 2:
Since $\alpha_*$ and $\beta_*$ are $Z[t]$-linear, they induce
uniquely $Z[t,t^{-1}]$-linear transformations
\[ \overline{\alpha}_*, \overline{\beta}_*:
   (W_* (X) \otimes_{Z[t]} Z[t,t^{-1}])_j \longrightarrow
    (\KO \smlhf)_{\bar{j}} (X),~ j\in \intg, \]
as explained in statement 1.       
Given an element
\[ [f: V^{j-4k} \to X] \otimes_{Z[t]} rt^k 
   \in (W_* (X) \otimes_{Z[t]} Z[t,t^{-1}])_j
     = \overline{W}_j (X) \]
$k\in \intg,$ $r\in Z$,     
there exists a (unique) element $q\in Q_{\bar{j}} (X)\otimes Z$
with $\omega (q)= [f] \otimes_{Z[t]} rt^k$,
as $\omega$ is an isomorphism.
Such an element is represented in the quotient $Q_{\bar{j}} (X)\otimes Z$ 
by an element of the form
\[
q = \sum_{i=1}^m [g_i: M_i^{j-4k_i} \to X]\otimes r_i,~
  [g_i] \in \Omega^\SO_{j-4k_i} (X),~ r_i \in Z,~ k_i \in \intg. \]
By the definition of $\omega,$
$\omega ([g_i] \otimes_{\intg} r_i) = [g_i] \otimes_{Z[t]} r_i t^{k_i},$
so that
\[ [f] \otimes_{Z[t]} rt^k = \sum_{i=1}^m 
   [g_i] \otimes_{Z[t]} r_i t^{k_i}  \]
and consequently,   
\begin{align*} 
\overline{\alpha}_* ([f] \otimes_{Z[t]} rt^k) 
&= \sum_{i=1}^m 
   \overline{\alpha}_* ([g_i] \otimes_{Z[t]} r_i t^{k_i}) 
   = \sum_{i=1}^m 
     (\alpha_* [g_i]) \cdot r_i a^{k_i} \\
&= \sum_{i=1}^m 
   (\beta_* [g_i]) \cdot r_i a^{k_i} 
   = \sum_{i=1}^m 
   \overline{\beta}_* ([g_i] \otimes_{Z[t]} r_i t^{k_i}) \\       
&= \overline{\beta}_* ([f] \otimes_{Z[t]} rt^k).    
\end{align*}   
This proves that the periodic versions agree on $\overline{W}_* (X)$, 
$\overline{\alpha}_* = \overline{\beta}_*$.
Using the commutativity of (\ref{equ.overalphaextendsalpha})
we deduce
$\alpha_* = \overline{\alpha}_* \circ i_*
    = \overline{\beta}_* \circ i_* = \beta_*.$
\end{proof}

\begin{prop} \label{prop.deltaagreeswithmuwitt}
The natural transformation of homology theories induced by
$\Delta: \MWITT \to \KO [\smlhf]$ agrees with the transformation
$\mu^\Witt,$
\[ \Delta_* = \mu^\Witt: \Omega^\Witt_* (Y,B) \longrightarrow
             \KO_* (Y,B)\otimes \ism \]
on compact PL pairs $(Y,B)$. In particular, $\Delta (X) = \mu_X$ 
for a compact Witt space $(X,\partial X)$.             
\end{prop}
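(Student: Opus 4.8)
The plan is to identify both $\Delta_*$ and $\mu^\Witt$ as $Z[t]$-linear natural transformations of homology theories $W_*(-) \to (\KO\smlhf)_*(-)$ on compact PL pairs and then to invoke Proposition \ref{prop.conntoperiodicfactors}(2), which reduces the desired equality to the case of classes represented by maps out of closed smooth manifolds. First I would note that, since the target $\KO_*(Y,B)\otimes\ism$ is an $\ism$-module, the transformation $\Delta_*$ descends from $\Omega^\Witt_*(Y,B)$ to $W_*(Y,B) = \Omega^\Witt_*(Y,B)\otimes\ism$; Siegel's $\mu^\Witt$, which we regard as valued in periodic $\KO[\smlhf]$-homology via the connective cover map $\ko[\smlhf]\to\KO[\smlhf]$, likewise yields a natural transformation $W_*(-)\to(\KO\smlhf)_*(-)$. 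Both are natural transformations of homology theories in the strong sense, i.e. compatible with boundary maps, $\mu^\Witt$ by Siegel's construction and $\Delta_*$ because it is induced by a morphism of spectra.

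Next I would verify $Z[t]$-linearity, where $Z[t]$ acts on $W_*(X)$ through $Z[t]\cong W_*(\pt)$, $t\mapsto c = [\cplx P^2]\otimes 1$, and on $(\KO\smlhf)_*(X)$ through $Z[t]\cong Z[a,a^{-1}]$, $t\mapsto a$, exactly as in the setup preceding Proposition \ref{prop.conntoperiodicfactors}. For $\mu^\Witt$ this is precisely the periodicity relation (\ref{equ.muwittperiodicity}) specialized to $M=\cplx P^2$ (using $\sigma(\cplx P^2)=1$). For $\Delta_*$ it follows from the fact that $\Delta: \MWITT\to\KO[\smlhf]$ is a morphism of ring spectra, so that $\Delta_*$ is a module homomorphism over the ring map $\Delta_*: W_*(\pt) \to (\KO\smlhf)_*(\pt)$, together with Proposition \ref{prop.speclevelsiegelsullonhtpygrps}, which gives $\Delta_*[\cplx P^2] = \sigma(\cplx P^2)\cdot a = a$; hence multiplication by $c$ on the source corresponds to multiplication by $a$ on the target, and $\Delta_*$ is $Z[t]$-linear with the stated identifications.

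Then I would check the hypothesis of Proposition \ref{prop.conntoperiodicfactors}(2): that $\Delta_*$ and $\mu^\Witt$ agree on every class $[g:M\to X]\otimes 1$ with $M$ a closed smooth manifold, regarded as a Witt space via its canonical PL structure. Such a class is the image of a genuine oriented smooth bordism class under the map $\Omega^\SO_*(X)\to\Omega^\Witt_*(X)$ induced by $\phi_W$ precomposed with the canonical $\MSO\to\MSPL$. On one hand, Proposition \ref{prop.sullivsiegelorientonmanifolds} gives $\mu^\Witt([g:M\to X]) = g_*\mu_M = g_*\Delta_\SO(M) = \Delta_{\SO *}[g]$. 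On the other hand, the construction of $\Delta: \MWITT\to\KO[\smlhf]$ shows that $\Delta\circ\phi_W: \MSPL\to\KO[\smlhf]$ is the ring-spectrum Sullivan orientation of Definition \ref{def.sullorientringmap}, which by Definition \ref{def.sullorientringmap} and Proposition \ref{prop.kappadeltasplisranickior} is homotopic to $\Delta_\SPL$, and $\Delta_\SPL$ restricts along $\MSO\to\MSPL$ to Sullivan's $\Delta_\SO$; hence $\Delta_*([g:M\to X]) = \Delta_{\SO *}[g]$ as well. Thus the two transformations coincide on smooth-manifold classes, and Proposition \ref{prop.conntoperiodicfactors}(2) yields $\Delta_* = \mu^\Witt$ on $W_*(X)$ for every compact PL space $X$ (and $\overline{\Delta}_* = \overline{\mu^\Witt}$ on the periodic versions).

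It remains to pass from compact PL spaces to compact PL pairs $(Y,B)$; I would do this either by running the argument of Proposition \ref{prop.conntoperiodicfactors}(2) verbatim for pairs — its proof uses only that $\omega$ is an isomorphism together with naturality and additivity, all available for pairs — or by the five lemma applied to the ladder of long exact sequences of the pair $(Y,B)$, using that both $\Delta_*$ and $\mu^\Witt$ commute with connecting homomorphisms. Specializing to $(Y,B)=(X,\partial X)$ and evaluating on $[\id_X]\in\Omega^\Witt_n(X,\partial X)$ then gives $\Delta(X) = \Delta_*[\id_X] = \mu^\Witt([\id_X]) = \mu_X$. The step I expect to require the most care is the bookkeeping of the $Z[t]$-module structures in the second step: one must be certain that $c=[\cplx P^2]$ and $a$ are matched up by \emph{both} transformations and that, with these identifications in force, $\Delta_*$ and $\mu^\Witt$ are genuinely $Z[t]$-linear, so that Proposition \ref{prop.conntoperiodicfactors} truly applies; by comparison, the extension to pairs is routine.
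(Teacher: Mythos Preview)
Your proposal is correct and follows essentially the same route as the paper's proof: localize to $W_*$, verify $Z[t]$-linearity of both transformations (via the periodicity relation for $\mu^\Witt$ and via multiplicativity of $\Delta$ on the spectrum level), invoke Proposition~\ref{prop.conntoperiodicfactors}(2) to reduce to smooth manifold classes, and then match these using Propositions~\ref{prop.sullivsiegelorientonmanifolds} and~\ref{prop.kappadeltasplisranickior}. Your explicit remark on extending from spaces to pairs (by rerunning the argument or by the five lemma) is a point the paper leaves implicit.
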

\begin{proof}
The ring-spectrum level Siegel-Sullivan orientation
$\Delta: \MWITT \to \KO [\smlhf]$ restricts to the
orientation 
$\Delta: \MSPL \to \KO [\smlhf]$ 
of Definition \ref{def.sullorientringmap}, and then further to
$\Delta|:\MSO \to \MSPL \stackrel{\Delta}{\longrightarrow}
\KO [\smlhf]$.
By Proposition \ref{prop.kappadeltasplisranickior},
and since $\Delta_\SPL$ extends $\Delta_\SO$,
the induced map
\[ \Delta|_*: \Omega^\SO_* (Y,B) \longrightarrow 
  \KO_* (Y,B) \otimes \ism \]
agrees with 
\[ \Delta_{\SO *}: \Omega^\SO_* (Y,B) \longrightarrow 
  \KO_* (Y,B) \otimes \ism \]
on every compact PL pair $(Y,B)$.
Since 
$\Delta_*: \Omega^\Witt_* (Y,B) \to \KO_* (Y,B)\otimes \ism$ 
is odd prime local, it factors uniquely through
the odd-primary localization $W_* (-)$ of $\Omega^\Witt_* (-)$.
The resulting homomorphism
\begin{equation} \label{equ.deltastaronwstar}
\Delta_*: W_* (Y,B) \longrightarrow (\KO \smlhf)_* (Y,B) 
\end{equation}
is a natural transformation of homology theories.
On a point, it is given by
\[ \Delta_* (c^k) = \Delta_{\SO *} (c^k) = 
   \Delta_{\SO *} [\cplx P^2]^k = \sigma (\cplx P^2)^k \cdot a^k =a^k,  \]
using the signature relation (\ref{equ.sullorientoncoeffs}).
Since $\Delta: \MWITT \to \KO [\smlhf]$ is a morphism of ring spectra,
this shows that (\ref{equ.deltastaronwstar}) is $Z[t]$-linear.
Siegel's transformation
\[ \mu^\Witt: W_* (Y,B) \longrightarrow (\KO \smlhf)_* (Y,B) \]
is a natural transformation of homology theories as well, and it is
$Z[t]$-linear by the periodicity relation (\ref{equ.muwittperiodicity}).
Thus, in order to prove equality of the two transformations on $W_* (Y,B)$,
it remains by Proposition \ref{prop.conntoperiodicfactors} to show that
they agree on compact smooth manifolds mapping into $(Y,B)$.
Let $g:(M,\partial M) \to (Y,B)$ be a smooth manifold in $(Y,B)$.
By Proposition \ref{prop.sullivsiegelorientonmanifolds},
$\mu_M = \Delta_\SO (M)$ and hence
\[ \Delta_* ([g]\otimes 1) 
  = \Delta_{\SO *} [g] 
  = g_* \Delta_\SO (M)
  = g_* \mu_M
= \mu^\Witt ([g]\otimes 1), \]
as was to be shown.
\end{proof}

\section{Multiplicativity of Orientation Classes}
\label{sec.siegelsullivanfullmultiplicativity}

The classical Sullivan orientation $\Delta_\SO (M \times N)$ 
of a product $M\times N$ of closed smooth manifolds is known to satisfy
the multiplicativity property
\begin{equation} \label{equ.deltasomfdtimesmfd}
\Delta_\SO (M\times N) = \Delta_\SO (M)\times \Delta_\SO (N) 
\end{equation}
with respect to the external product on $\KO [\smlhf]$-homology.
One way to see this
is to argue L-theoretically using Ranicki's
multiplicative morphism $\sigma^*: \MSPL \to \syml (\intg)$:
A ring morphism $\phi: E\to F$ between ring spectra
preserves external products, i.e. the diagram
\begin{equation} \label{dia.ringmorpreservesextprod}
\xymatrix{
E_* (X) \otimes E_* (Y) \ar[r]^{\phi \otimes \phi}
\ar[d]_\times & F_* (X) \otimes F_* (Y) \ar[d]^\times \\
E_* (X\times Y) \ar[r]_\phi & F_* (X\times Y)
} 
\end{equation}
commutes. Applying this to $\sigma^*$, one obtains the multiplicativity
of the $\syml$-homology orientation of PL manifolds,
\begin{align*}
[M]_\syml \times [N]_\syml
&= \sigma^* [\id_M] \times \sigma^* [\id_N] 
 = \sigma^* ([\id_M] \times [\id_N]) \\
&= \sigma^* [\id_{M \times N}] 
 = [M\times N]_\syml.
\end{align*}
(Via the ring morphism $\MSTOP \to \syml (\intg)$, 
this works just as well for topological manifolds.)
Using the ring equivalence
$\kappa^{-1}: \syml (\intg)[\smlhf] \simeq \KO [\smlhf],$
it follows that
$\Delta (M\times N) = \Delta (M)\times \Delta (N)$
for $\Delta: \MSPL \to \KO[\smlhf]$ as in 
Definition \ref{def.sullorientringmap}.
Equation (\ref{equ.deltasomfdtimesmfd}) is then a consequence of 
Proposition \ref{prop.kappadeltasplisranickior}.
Alternatively, and more directly, one may also deduce
(\ref{equ.deltasomfdtimesmfd}) 
from the multiplicativity of Sullivan's universal elements
\[ \Delta_{4n} \in \KO^{4n} (\MSO_{4n})\otimes \intg [\smlhf]. \]
Indeed, he shows (\cite[p. 202, g)]{sullivanmit}) that
the canonical homomorphism, induced by the classifying map,
\[ \widetilde{\KO \smlhf}^{4(q+r)} (\MSO_{4(q+r)})
   \longrightarrow
    \widetilde{\KO \smlhf}^{4(q+r)}  (\MSO_{4q} \Smash \MSO_{4r})
\]
sends
$\Delta_{4(q+r)} \mapsto \Delta_{4q} \times \Delta_{4r}.$  
This, together with naturality, implies the relation
\[ \Delta_\SO (\xi \times \eta)
    = \Delta_\SO (\xi) \times \Delta_\SO (\eta)
    \in \widetilde{\KO \smlhf}^* (\Th (\xi \times \eta))
      = \widetilde{\KO \smlhf}^* (\Th \xi \Smash \Th \eta) \]
for the Sullivan Thom class
$\Delta_\SO (\xi) \in \widetilde{\KO \smlhf} (\Th \xi)$
of an oriented vector bundle $\xi$ over a finite complex;
see also Madsen-Milgram \cite[p. 116]{madsenmilgram}.
Finally, one applies this to the stable normal bundles
$\xi = \nu_M,$ $\eta = \nu_N$ and uses Alexander duality.

\begin{remark}
As Rosenberg and Weinberger
point out in \cite[p. 51, Lemma 6]{rosenbergweinberger}, 
it is \emph{not} true that the class $[D_M]$ of the signature operator $D_M$
in Kasparov K-homology is multiplicative under external Kasparov product. 
If one of the dimensions of these classes is even, then the class
is multiplicative, but if both dimensions are odd, then
$[D_{M\times N}] = 2 [D_M] \times [D_N]$.
See also \cite[Theorem 8.5]{landnikolausschlichting}.
\end{remark}

A first immediate application of our approach, then, is a 
proof of full cartesian multiplicativity of the Siegel-Sullivan
orientation class
$\mu_X = \Delta (X) \in \KO_n (X)\otimes \ism$
for Witt spaces $X$, generalizing the manifold multiplicativity.
(This was not established in \cite{siegel}.)
\begin{thm} \label{thm.siegelsullivcartesianmult}
Let $X$ and $Y$ be closed Witt spaces. Then the Siegel-Sullivan
orientation of the Witt space $X\times Y$ is given by
\[ \Delta (X\times Y) = \Delta (X) \times \Delta (Y), \]
using the external product
$(\KO \smlhf)_m (X) \otimes (\KO \smlhf)_n (Y) \to
(\KO \smlhf)_{m+n} (X\times Y)$.
\end{thm}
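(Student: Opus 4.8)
The plan is to deduce the cartesian multiplicativity directly from the \emph{ring-spectrum} property of the Siegel--Sullivan orientation established in Section~\ref{sec.ringspeclevsiegelsullivan}, exactly as was done for manifolds at the beginning of the present section, rather than from Siegel's original construction. Two ingredients are needed: that $\Delta\colon \MWITT \to \KO[\smlhf]$ is a morphism of ring spectra, and that the ring structure on $\MWITT$ is the one induced by the cartesian product of Witt spaces.

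First I would record the external-product compatibility. Since $\Delta$ is a ring map, diagram~(\ref{dia.ringmorpreservesextprod}) applied with $E=\MWITT$ and $F=\KO[\smlhf]$ commutes, so that $\Delta_*(u\times v)=\Delta_* u \times \Delta_* v$ for $u\in\Omega^\Witt_m(X)$, $v\in\Omega^\Witt_n(Y)$, where on the left we use the external product on Witt bordism induced by the multiplication $\MWITT\wedge\MWITT\to\MWITT$ and on the right the external product on $\KO[\smlhf]$-homology appearing in the statement of the theorem. Second, I would identify this Witt-bordism external product on representatives: because the multiplication on $\MWITT$ arises, via the ad-theory and Quinn-spectrum machinery of \cite{blm}, from the functor sending a pair of Witt spaces to their product, the pairing
\[ \times\colon \Omega^\Witt_m(X)\otimes\Omega^\Witt_n(Y)\longrightarrow\Omega^\Witt_{m+n}(X\times Y) \]
sends $[f\colon A\to X]\otimes[g\colon B\to Y]$ to $[f\times g\colon A\times B\to X\times Y]$. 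Taking $f=\id_X$ and $g=\id_Y$ gives $[\id_X]\times[\id_Y]=[\id_{X\times Y}]$ in $\Omega^\Witt_{m+n}(X\times Y)$. Combining the two steps,
\begin{align*}
\Delta(X\times Y) &= \Delta_*[\id_{X\times Y}] = \Delta_*\bigl([\id_X]\times[\id_Y]\bigr) \\
&= \Delta_*[\id_X]\times\Delta_*[\id_Y] = \Delta(X)\times\Delta(Y),
\end{align*}
which is the assertion; since $X$ and $Y$ are closed there are no boundary terms to track.

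I expect the main obstacle to be the bookkeeping in the second step: one must verify carefully that the ring-spectrum multiplication on $\MWITT$ constructed in \cite{blm} really does induce, on homotopy and on homology, the cartesian product of Witt bordism classes, and that the resulting external product is compatible with the standard external product used in the theorem. This amounts to tracing the lax symmetric monoidal functor $\mathbf{M}$ of Laures--McClure through the product pairing of the Witt ad-theory; since the product of two Witt spaces is again a Witt space and the pairing is natural in the base spaces, the verification is routine but should be stated explicitly. An alternative, entirely parallel route bypasses $\MWITT$: transport along the ring equivalence $\kappa$ of Proposition~\ref{prop.einftyequivkol} and use multiplicativity of the $\syml(\rat)$-homology fundamental class, $[X]_\syml\times[Y]_\syml=[X\times Y]_\syml$, which follows from $\tau\colon\MWITT\to\syml(\rat)$ being a ring map precisely as in the manifold computation recalled earlier in this section, together with the identity $\kappa_*\Delta(X)=[X]_\syml$ of~(\ref{equ.kappadeltaissymlor}).
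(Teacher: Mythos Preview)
Your proof is correct and is essentially identical to the paper's: both apply diagram~(\ref{dia.ringmorpreservesextprod}) to the ring morphism $\Delta\colon\MWITT\to\KO[\smlhf]$ and use $[\id_X]\times[\id_Y]=[\id_{X\times Y}]$ to obtain the displayed chain of equalities. The paper's version simply omits your explanatory remarks about the ad-theoretic origin of the product on $\MWITT$ and your alternative $\syml$-theoretic route, taking these points as understood.
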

\begin{proof}
Applying diagram (\ref{dia.ringmorpreservesextprod}) 
to the ring
morphism $\Delta: \MWITT \to \KO [\smlhf]$, we obtain
for the orientation classes
\begin{align*}
\Delta (X\times Y)
&= \Delta_* [\id_{X\times Y}]
    = \Delta_* ([\id_X] \times [\id_Y]) \\
&= \Delta_* [\id_X] \times \Delta_* [\id_Y]
   = \Delta (X) \times \Delta (Y).    
\end{align*}
\end{proof}

\section{Bundle Transfer of the Siegel-Sullivan Orientation}
\label{sec.bundletransfersiegelsullivan}

The equivalence of $\mathbb{E}_\infty$-ring spectra
$\kappa: \KO [\smlhf] \simeq \syml (\real) [\smlhf]$ constructed in
Proposition \ref{prop.einftyequivkol},
together with $\syml$-theoretic results of \cite{banaglbundletransfer},
allows for a treatment of bundle transfers of
Siegel-Sullivan classes. We begin with recollections on
homological block bundle transfer homomorphisms and use $\kappa$ to relate
the transfers on $\KO$- and $\syml$-homology.

Let $F$ be a closed oriented PL manifold 
of dimension $d$ and let $K$ be a finite ball complex with associated
polyhedron $B=|K|$. Let $b$ denote the dimension of $B$.
Let $\xi$ be an oriented PL $F$-block bundle over $K$
with total space $X=E(\xi)$, $\dim X = b+d$. The theory of
$F$-block bundles has been developed by Casson in \cite{casson}.
PL-locally trivial PL fiber bundles $X\to B$ with pointwise fiber $F$
are a special case.

Let $E$ be a ring spectrum equipped with a ring map $\MSPL \to E$.
Then the block bundle $\xi$ has an associated
block transfer homomorphism
\begin{equation} \label{equ.etransferofp}
\xi^!: E_n (B) \longrightarrow E_{n+d} (X).
\end{equation}
In \cite{banaglbundletransfer}, we described $\xi^!$
as a composition
\[ E_n (B) \stackrel{\sigma}{\cong}
  \widetilde{E}_{n+s} (S^s B^+)
   \stackrel{T(\xi)_*}{\longrightarrow} 
     \widetilde{E}_{n+s} (\Th (\nu))
      \stackrel{\Phi}{\longrightarrow}
 E_{n+s-(s-d)} (X). \]
Here, $\sigma$ is the suspension isomorphism and
$T(\xi): S^s B^+ \to \Th (\nu)$ the Umkehr map
(i.e. Pontrjagin-Thom collapse)
associated to a $\xi$-block preserving PL embedding
$X \hookrightarrow B \times \real^s$ for large $s$.
Such an embedding can be shown to have a regular neighborhood
that is the total space of an $(s-d)$-disc block bundle $\nu$ over $X$,
see e.g. \cite[Section 2]{banaglbundletransfer}.
This normal disc block bundle $\nu$ 
represents the stable vertical normal bundle of $\xi$
and can be taken to be a PL microbundle
(or PL $(\real^{s-d}, 0)$-bundle) since
$\BSPL \simeq \BBSPL$ for the stable classifying spaces.
(Such a stable vertical normal bundle exists even for
mock bundles with manifold blocks,
see \cite[IV.2, p. 83]{buonrs}.)
The image of the
$\MSPL$-cohomology Thom class of $\nu$ under the ring map
$\MSPL \to E$ yields an $E$-cohomology Thom class of $\nu$.
Cap product with this class defines the Thom homomorphism $\Phi$.

We shall consider the block transfer $\xi^!$ in the cases where
the ring spectrum $E$ is $\KO[\smlhf]$, $\syml (\rat)$
or $\syml (\rat)[\smlhf] = \syml (\intg)[\smlhf]$, and the ring maps
$\MSPL \to E$ are the orientations considered earlier.
The compatibility of these transfers will be established
in Lemma \ref{lem.kappacommtransf} and is essentially
a consequence of the multiplicativity of the map $\kappa$.
We need to be more precise about the involved Thom homomorphisms $\Phi$.
Our arguments involve the three Thom classes
discussed in Section \ref{sec.orientsullivanranicki}: The class
$u_\SPL (\alpha)$ in $\MSPL$-cohomology,  
$u_\syml (\alpha)$ in $\syml$-cohomology,
and the class $\Delta (\alpha)$ in $\KO[\smlhf]$-cohomology.
Let $E$ be a ring spectrum and let $m=s-d$ denote the rank 
of the aforementioned representative of the
stable vertical normal disc block bundle $\nu$.
The reduced $E$-cohomology group of the Thom space can be expressed 
as a relative group,
\[ \widetilde{E}^m (\Th (\nu)) \cong E^m (N,\partial N), \]
where $N$ is the total space of $\nu$ and
$\partial N$ the total space of the sphere bundle of $\nu$. 
Let
\[ \rho_*: E_* (N) \stackrel{\cong}{\longrightarrow} E_* (X) \]
be the inverse of the isomorphism induced on $E$-homology by the inclusion
$X \hookrightarrow N$ of the zero section.
If $\nu$ is $E$-orientable, then
using the cap product
\[ \cap: E^m (N,\partial N) \otimes E_n (N,\partial N) 
   \longrightarrow E_{n-m} (N)
   \stackrel{\rho_*}{\cong} E_{n-m} (X)  \]
with a Thom class ($E$-orientation) $u\in E^m (N,\partial N)$ for $\nu$,   
we obtain the \emph{Thom homomorphism}
\[   
\Phi := \rho_* (u \cap -):
\widetilde{E}_n (\Th (\nu)) \cong E_n (N,\partial N) \longrightarrow
 E_{n-m} (X).
\]
Since $\Delta (\nu)$ is a $\KO [\smlhf]$-cohomology orientation
of $\nu$ with $\Delta (\nu)=\Delta_* u_\SPL (\nu)$
(Definition \ref{def.deltaofplmicrobundle}), 
we get for the ring spectrum $E=\KO [\smlhf]$ the Thom homomorphism
\[   
\Phi = \rho_* (\Delta (\nu) \cap -):
\widetilde{\KO}_n (\Th (\nu))\otimes \ism \longrightarrow
 \KO_{n-m} (X)\otimes \ism.
\]
Similarly, since $u_\syml (\nu)$ is an $\syml [\smlhf]$-cohomology orientation
of $\nu$ with $u_\syml (\nu) = \sigma^* u_\SPL (\nu),$
we receive for the ring spectrum $E=\syml [\smlhf]$ the Thom homomorphism
\[   
\Phi = \rho_* (u_\syml (\nu) \cap -):
\widetilde{\syml}_n (\Th (\nu))\otimes \ism \longrightarrow
 \syml_{n-m} (X)\otimes \ism.
\]

\begin{lemma} \label{lem.thomcommkappa}
The Thom homomorphisms $\Phi$ 
on $\syml [\smlhf]$-homology and $\KO [\smlhf]$-homology
agree under the natural isomorphism $\kappa$, that is, the diagram
\[ \xymatrix{
\widetilde{\KO}_n (\Th (\nu))\otimes \ism \ar[r]^\Phi 
   \ar[d]_{\kappa_*}^\cong
  & \KO_{n-m} (X)\otimes \ism \ar[d]^{\kappa_*}_\cong \\
\widetilde{\syml}_n (\Th (\nu)) \otimes \ism \ar[r]^\Phi 
  & \syml_{n-m} (X)\otimes \ism
} \]
commutes.
\end{lemma}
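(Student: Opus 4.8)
The plan is to peel off the two constituent maps in the definition of $\Phi$ --- the cap product with the Thom class and the zero-section isomorphism $\rho_*$ --- and check that each commutes with $\kappa_*$. The entire content is carried by two facts already available: that $\kappa$ is a morphism of ring spectra (Proposition \ref{prop.einftyequivkol}), hence compatible with cap products; and that $\kappa_*$ sends the $\KO[\smlhf]$-cohomology Thom class $\Delta(\nu)$ to Ranicki's $\syml[\smlhf]$-cohomology Thom class $u_\syml(\nu)$, which is precisely Lemma \ref{lem.kappadeltaisranickiu}.

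First I would invoke the general module-compatibility of a ring morphism $\phi\colon E\to F$ of spectra with cap products: for any pair $(Z,A)$ the square with horizontal arrows the cap products $E^m(Z,A)\otimes E_n(Z,A)\to E_{n-m}(Z)$ and $F^m(Z,A)\otimes F_n(Z,A)\to F_{n-m}(Z)$, and vertical arrows $\phi_*\otimes\phi_*$ and $\phi_*$, commutes. Applying this with $\phi=\kappa$ and $(Z,A)=(N,\partial N)$, and using the identifications $\widetilde{E}^m(\Th(\nu))\cong E^m(N,\partial N)$ and $\widetilde{E}_n(\Th(\nu))\cong E_n(N,\partial N)$ for $E=\KO[\smlhf]$ and $E=\syml[\smlhf]$ (which are compatible with $\kappa_*$ because $\kappa$ is a map of spectra, hence respects the suspension/excision isomorphisms defining them), we obtain that $\kappa_*(\Delta(\nu)\cap z)=\big(\kappa_*\Delta(\nu)\big)\cap\kappa_*(z)$ for all $z\in\widetilde{\KO}_n(\Th(\nu))\otimes\ism$. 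By Lemma \ref{lem.kappadeltaisranickiu}, $\kappa_*\Delta(\nu)=u_\syml(\nu)$, so $\kappa_*(\Delta(\nu)\cap z)=u_\syml(\nu)\cap\kappa_*(z)$ in $\syml_{n-m}(N)\otimes\ism$.

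It remains to note that $\kappa_*$ commutes with $\rho_*$. By definition $\rho_*$ is the inverse of the isomorphism induced on $E$-homology by the inclusion of the zero section $X\hookrightarrow N$; since $\kappa$ is a natural transformation of homology theories it commutes with the map induced by this inclusion, and therefore with its inverse $\rho_*$. Stringing the two steps together gives
\[
\kappa_*\Phi(z)=\kappa_*\rho_*\big(\Delta(\nu)\cap z\big)=\rho_*\kappa_*\big(\Delta(\nu)\cap z\big)=\rho_*\big(u_\syml(\nu)\cap\kappa_*z\big)=\Phi(\kappa_*z),
\]
which is exactly the commutativity of the square in the statement. The only point requiring a little care is the bookkeeping in the previous paragraph --- making sure that the identifications of reduced Thom-space (co)homology with relative groups of $(N,\partial N)$ are the same ones implicit in the definition of $\Phi$ for both spectra and are preserved by $\kappa_*$ --- but, as noted, this is automatic because $\kappa$ is a map of spectra; there is no genuine obstacle beyond assembling these standard compatibilities.
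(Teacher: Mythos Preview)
Your proposal is correct and follows essentially the same approach as the paper's own proof: both reduce the commutativity to (i) naturality of $\kappa_*$ with respect to $\rho_*$, (ii) compatibility of the ring morphism $\kappa$ with cap products, and (iii) Lemma~\ref{lem.kappadeltaisranickiu} identifying $\kappa_*\Delta(\nu)=u_\syml(\nu)$, leading to the identical chain of equalities $\kappa_*\Phi=\rho_*(u_\syml(\nu)\cap\kappa_*(-))=\Phi\kappa_*$. Your added remark about the compatibility of the Thom-space/relative-group identifications is a harmless elaboration the paper leaves implicit.
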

\begin{proof}
As $\kappa_*$ is a natural transformation of homology theories,
it commutes with the isomorphism $\rho_*$.
Since $\kappa$ is a morphism of ring spectra, it respects cap products, i.e.
the diagram
\[ \xymatrix{
(\KO [\smlhf])^m (Y,A) \otimes (\KO [\smlhf])_n (Y,A) 
  \ar[r]^>>>>>\cap  \ar[d]_{\kappa_* \otimes \kappa_*}
  & (\KO [\smlhf])_{n-m} (Y) \ar[d]^{\kappa_*} \\
(\syml [\smlhf])^m (Y,A) \otimes (\syml [\smlhf])_n (Y,A) 
 \ar[r]^>>>>>\cap & (\syml [\smlhf])_{n-m} (Y)
} \]
commutes.
By Lemma \ref{lem.kappadeltaisranickiu},
$\kappa_* (\Delta (\nu)) = u_\syml (\nu).$
Therefore,
\begin{align*}
\kappa_* \Phi
&= \kappa_* \rho_* (\Delta (\nu) \cap -) = \rho_* \kappa_* (\Delta (\nu) \cap -) \\
&= \rho_* (\kappa_* (\Delta (\nu)) \cap \kappa_* (-)) 
   = \rho_* (u_\syml (\nu) \cap \kappa_* (-)) = \Phi \kappa_*.
\end{align*}
\end{proof}

\begin{remark}  \label{rem.phicommloc}
Since localization is multiplicative on the spectrum level, 
it takes the Thom homomorphism $\Phi$ defined by capping with 
the Thom class $u_\syml (\nu) \in (\widetilde{\syml \rat})^m (\Th (\nu))$
to the Thom homomorphism $\Phi$ defined by capping with
the localized class
$u_\syml (\nu) \in (\widetilde{\syml \rat} [\smlhf])^m (\Th (\nu))$, that is,
the diagram
\[ \xymatrix{
(\widetilde{\syml \rat})_n (\Th (\nu)) \ar[d] \ar[r]^\Phi &
    (\syml \rat)_{n-m} (X) \ar[d] \\
(\widetilde{\syml \rat} [\smlhf])_n (\Th (\nu)) \ar[r]^\Phi &
    (\syml \rat [\smlhf])_{n-m} (X)    
} \]
commutes. Thus $\syml$-theoretic transfers also
commute with localization away from $2$.
\end{remark}

Using the Thom homomorphisms $\Phi$ appearing in Lemma \ref{lem.thomcommkappa},
there are in particular transfers
\[ \xi^!: \syml_n (B)\otimes \ism \longrightarrow 
   \syml_{n+d} (X)\otimes \ism \]
and
\[ \xi^!: \KO_n (B)\otimes \ism \longrightarrow 
   \KO_{n+d} (X)\otimes \ism. \]   

\begin{lemma} \label{lem.kappacommtransf}
The diagram of transfers
\[ \xymatrix{
\KO_{*+d} (X) \otimes \ism \ar[r]^{\kappa_*}_\cong  & 
  \syml_{*+d} (X) \otimes \ism  \\
\KO_* (B) \otimes \ism \ar[u]^{\xi^!} \ar[r]^{\kappa_*}_\cong 
  & \syml_* (B) \otimes \ism \ar[u]_{\xi^!}
} \]
commutes.
\end{lemma}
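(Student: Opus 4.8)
The plan is to decompose the transfer $\xi^!$, in both its $\KO[\smlhf]$- and its $\syml[\smlhf]$-theoretic incarnation, into the three elementary steps used to define it in Section \ref{sec.bundletransfersiegelsullivan}, and to verify that $\kappa_*$ is compatible with each step separately. Recall that for a ring spectrum $E$ equipped with a ring map $\MSPL \to E$, the transfer factors as
\[ E_n(B) \stackrel{\sigma}{\cong} \widetilde{E}_{n+s}(S^s B^+) \stackrel{T(\xi)_*}{\longrightarrow} \widetilde{E}_{n+s}(\Th(\nu)) \stackrel{\Phi}{\longrightarrow} E_{n+d}(X), \]
where $\sigma$ is the $s$-fold suspension isomorphism, $T(\xi)$ is the Pontrjagin--Thom collapse associated to a $\xi$-block preserving embedding $X \hookrightarrow B \times \real^s$, and $\Phi$ is the Thom homomorphism built from the relevant $E$-cohomology Thom class of the stable vertical normal disc block bundle $\nu$. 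The crucial point is that the spaces $S^s B^+$, $\Th(\nu)$ and $X$, as well as the map $T(\xi)$, depend only on $\xi$ and the chosen embedding, not on the coefficient spectrum $E$; only the Thom class entering $\Phi$ changes with $E$, namely $\Delta(\nu)$ for $E=\KO[\smlhf]$ and $u_\syml(\nu)$ for $E=\syml[\smlhf]$.

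First I would use that $\kappa_*$ is a natural transformation of homology theories commuting with suspension isomorphisms, which is automatic since $\kappa$ is a morphism of spectra. Hence $\kappa_*$ commutes with $\sigma$, and, by naturality applied to the space-level map $T(\xi)$, it commutes with $T(\xi)_*$. It therefore suffices to know that $\kappa_*$ intertwines the two Thom homomorphisms $\Phi$ --- and this is precisely Lemma \ref{lem.thomcommkappa}, which in turn rests on the identification $\kappa_*(\Delta(\nu)) = u_\syml(\nu)$ from Lemma \ref{lem.kappadeltaisranickiu}, together with the facts that the ring map $\kappa$ respects cap products and that $\kappa_*$ commutes with the zero-section isomorphism $\rho_*$. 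Concatenating the three commuting squares yields $\kappa_* \circ \xi^! = \xi^! \circ \kappa_*$ on $\KO_*(B)\otimes\ism$, as claimed.

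I do not expect a genuine obstacle here: the statement is a formal consequence of the naturality and multiplicativity of $\kappa$ once the Thom-class identification is in hand. The only point that requires care is to insist that the \emph{same} embedding $X \hookrightarrow B \times \real^s$ --- hence the \emph{same} normal block bundle $\nu$ and the \emph{same} Umkehr map $T(\xi)$ --- be used in defining the transfer for both theories; this is legitimate because the construction of $\nu$ and $T(\xi)$ is purely geometric and precedes the choice of $E$, so the single place where the two transfers genuinely differ, the choice of Thom class in $\Phi$, is exactly where Lemma \ref{lem.thomcommkappa} does the work.
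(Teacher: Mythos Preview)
Your proposal is correct and follows essentially the same approach as the paper: factor the transfer into the suspension isomorphism, the Umkehr map $T(\xi)_*$, and the Thom homomorphism $\Phi$, then use naturality of $\kappa_*$ for the first two squares and Lemma~\ref{lem.thomcommkappa} for the third. Your additional remark that the same embedding (hence the same $\nu$ and $T(\xi)$) is used for both theories is a helpful clarification, but the paper's proof is otherwise identical in structure.
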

\begin{proof}
By construction of the block transfer $\xi^!$, the diagram
factors as
\[ \xymatrix{
 \KO_{*+d} (X) \otimes \ism 
 \ar[r]^{\kappa_*}_\cong 
  & \syml_{*+d} (X) \otimes \ism \\
\widetilde{\KO}_{*+s} (\Th (\nu)) \otimes \ism 
 \ar[u]^\Phi \ar[r]^{\kappa_*}_\cong 
  & \widetilde{\syml}_{*+s} (\Th (\nu)) \otimes \ism 
    \ar[u]_\Phi \\
\widetilde{\KO}_{*+s} (S^s B^+) \otimes \ism 
 \ar[u]^{T(\xi)_*} \ar[r]^{\kappa_*}_\cong 
  & \widetilde{\syml}_{*+s} (S^s B^+) \otimes \ism 
    \ar[u]_{T(\xi)_*} \\
\KO_* (B) \otimes \ism \ar[u]^\sigma \ar[r]^{\kappa_*}_\cong 
  & \syml_* (B) \otimes \ism. \ar[u]_\sigma
} \]
The bottom and middle squares commute, as $\kappa_*$ is a natural transformation
of homology theories, while the top square involving the
Thom homomorphisms commutes
by Lemma \ref{lem.thomcommkappa}.
\end{proof}

The material on block bundle transfer homomorphisms 
developed above enables us to establish
our main result on bundle transfer of Siegel-Sullivan orientations:
\begin{thm} \label{thm.transfdeltaWitt}
If $\xi$ is an oriented PL $F$-block bundle with closed oriented 
PL manifold fiber $F$ over a closed Witt base $B$,
then the Siegel-Sullivan orientations of base and total space $X$
are related under block bundle transfer by
\[ \xi^! \Delta (B) = \Delta (X). \]
\end{thm}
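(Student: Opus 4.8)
The plan is to transport the identity from $\KO[\smlhf]$-homology into symmetric L-theory via the ring equivalence $\kappa$ of Proposition \ref{prop.einftyequivkol}, where the corresponding transfer statement for the L-homology fundamental class is available.

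First I would reduce the theorem to an L-theoretic assertion. Since $\kappa_*\colon \KO_*(X)\otimes\ism \to \syml_*(X)\otimes\ism$ is an isomorphism, it suffices to verify $\kappa_*\xi^!\Delta(B)=\kappa_*\Delta(X)$. By Lemma \ref{lem.kappacommtransf} the block transfer $\xi^!$ on $\KO[\smlhf]$-homology corresponds under $\kappa_*$ to the block transfer $\xi^!$ on $\syml[\smlhf]$-homology, so $\kappa_*\xi^!\Delta(B)=\xi^!\kappa_*\Delta(B)$. On the other hand (\ref{equ.kappadeltaissymlor}) identifies $\kappa_*\Delta(B)=[B]_\syml$ and $\kappa_*\Delta(X)=[X]_\syml$, the $\syml\rat[\smlhf]$-homology fundamental classes of the Witt spaces $B$ and $X$. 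Thus the claim is equivalent to the transfer formula $\xi^![B]_\syml=[X]_\syml$ in $(\syml\rat[\smlhf])_{b+d}(X)$.

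Next I would descend to the unlocalized level. By Remark \ref{rem.phicommloc} the $\syml$-theoretic block transfer commutes with localization away from $2$, and the classes above are the images of $[B]_\syml\in\syml(\rat)_b(B)$ and $[X]_\syml\in\syml(\rat)_{b+d}(X)$ under the localization map; hence it is enough to prove $\xi^![B]_\syml=[X]_\syml$ in $\syml(\rat)$-homology. This is precisely the $\syml$-theoretic block bundle transfer theorem for closed Witt bases established in \cite{banaglbundletransfer}, which generalizes to block bundles and singular bases the fiber-bundle formula $p^![B]_\syml=[X]_\syml$ of L\"uck and Ranicki \cite{lueckranicki} over PL manifold bases. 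Feeding this in completes the argument.

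I expect the only real obstacle to be this last, L-theoretic, input; the $\KO$-side is a formal diagram chase resting solely on Lemma \ref{lem.kappacommtransf} (that $\kappa$ is a ring equivalence compatible with the two transfers) and (\ref{equ.kappadeltaissymlor}) (that $\kappa$ carries $\Delta$ to the L-homology fundamental class). The proof of $\xi^![B]_\syml=[X]_\syml$ in \cite{banaglbundletransfer} in turn rests on a geometric block-bundle transfer $\Omega^\Witt_*(B)\to\Omega^\Witt_{*+d}(X)$ that sends $[\id_B]$ to $[\id_X]$ --- using that the total space of a PL manifold block bundle over a Witt base is again a Witt space --- together with the compatibility of the symmetric-signature morphism $\tau\colon\MWITT\to\syml(\rat)$ with the spectrum-level transfer associated to the ring map $\MSPL\to\syml(\rat)$, so that applying $\tau$ to $\xi^![\id_B]=[\id_X]$ gives the asserted equality of fundamental classes.
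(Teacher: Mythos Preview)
Your proposal is correct and follows essentially the same route as the paper: reduce via the isomorphism $\kappa_*$ and Lemma \ref{lem.kappacommtransf} to the $\syml[\smlhf]$-theoretic statement $\xi^![B]_\syml=[X]_\syml$, use Remark \ref{rem.phicommloc} to pass to the unlocalized $\syml(\rat)$-transfer, and then invoke \cite[Theorem 7.1]{banaglbundletransfer}. The only difference is the order of presentation (you apply $\kappa$ first and localize second, the paper does the reverse), and your final paragraph sketching the internal mechanics of \cite{banaglbundletransfer} is additional commentary rather than a deviation in strategy.
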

\begin{proof}
Remark \ref{rem.phicommloc} implies that transfer commutes with localization
away from $2$:
the diagram
\[ \xymatrix{
(\syml \rat)_{*+d} (X) \ar[r]  & 
  (\syml \rat [\smlhf])_{*+d} (X) \\
(\syml \rat)_* (B) \ar[u]^{\xi^!} \ar[r]
  & (\syml \rat [\smlhf])_* (B) \ar[u]_{\xi^!}
} \]
commutes.
In \cite[Theorem 7.1]{banaglbundletransfer} we showed that the left hand transfer sends
$[B]_\syml$ to $[X]_\syml$.
Thus 
\[ \xi^! [B]_\syml = [X]_\syml \]
for the right hand transfer as well.
By (\ref{equ.kappadeltaissymlor}),
\[ 
\kappa_* \Delta (X) = [X]_\syml \in (\syml \rat [\smlhf])_{b+d} (X),~ 
\kappa_* \Delta (B) = [B]_\syml \in (\syml \rat [\smlhf])_b (B).
\]
Using Lemma \ref{lem.kappacommtransf},
\[ \kappa_* \xi^! \Delta (B)
  = \xi^! \kappa_* \Delta (B)
  = \xi^! [B]_\syml = [X]_\syml
  = \kappa_* \Delta (X). \]
It follows that $\xi^! \Delta (B) = \Delta (X)$, as $\kappa_*$ is
an isomorphism.  
\end{proof}

\section{Gysin Restriction of the Siegel-Sullivan Orientation}
\label{sec.gysinsiegelsullivan}

Our method based on the equivalence of $\mathbb{E}_\infty$-ring spectra
$\kappa: \KO [\smlhf] \simeq \syml (\real) [\smlhf]$ constructed in
Proposition \ref{prop.einftyequivkol},
together with results of \cite{banaglnyjm},
allows for a treatment of Gysin restrictions of
Siegel-Sullivan classes under normally nonsingular inclusions of singular
spaces in a fashion parallel to our analysis of bundle transfers
in the previous section. 
An inclusion $g:Y\hookrightarrow X$ of stratified spaces
is \emph{normally nonsingular} if $Y$ has an open tubular neighborhood that can be
equipped in a stratum preserving manner with the structure 
of a vector bundle $\nu$ over $Y$
such that $Y$ is identified with the zero section.
For example, the transverse intersection of a smooth submanifold
with a Whitney stratified set $X$ in an ambient smooth manifold is
normally nonsingular in $X$ (\cite[p. 47, Thm. 1.11]{gmsmt}).

Let $g:Y^{n-c} \hookrightarrow X^n$ be a codimension $c$ 
normally nonsingular inclusion
of closed Witt spaces with normal bundle $\nu$ of rank $c$.
Let $E$ be a ring spectrum such that $\nu$ has an $E$-orientation $u$.
We describe the Gysin restriction on $E$-homology associated
to $g$. The canonical map $j:X^+ \to \Th (\nu)$ induces a homomorphism
\[ j_*: E_* (X) \longrightarrow
        \widetilde{E}_* (\Th (\nu)). \]
As in the previous section, cap product with the $E$-orientation $u$ yields
the Thom homomorphism
\[ \Phi = \rho_* (u\cap -): \widetilde{E}_* (\Th (\nu))
    \longrightarrow E_{*-c} (Y). \]
Composition defines the \emph{Gysin restriction}
\[ g^! = \Phi \circ j_*:
       E_* (X) \longrightarrow E_{*-c} (Y). \]           
       
Now suppose that $\nu$ is $H\intg$-oriented, compatibly
with the orientations of $X$ and $Y$.       
Applying the above general description of $g^!$ to $E=\KO [\smlhf]$
with $u=\Delta (\nu)$, we obtain the Gysin homomorphism
\[ g^!: \KO_* (X) \otimes \ism 
          \longrightarrow \KO_{*-c} (Y) \otimes \ism, \]   
and applying it to $E=\syml [\smlhf]$
with $u=u_\syml (\nu)$, we obtain the Gysin homomorphism
\[ g^!: \syml_* (X) \otimes \ism 
          \longrightarrow \syml_{*-c} (Y) \otimes \ism. \]

\begin{lemma} \label{lem.kappacommgysin}
The diagram of Gysin restrictions
\[ \xymatrix{
\KO_{*-c} (Y) \otimes \ism \ar[r]^{\kappa_*}_\cong  & 
  \syml_{*-c} (Y) \otimes \ism  \\
\KO_* (X) \otimes \ism \ar[u]^{g^!} \ar[r]^{\kappa_*}_\cong 
  & \syml_* (X) \otimes \ism \ar[u]_{g^!}
} \]
commutes.
\end{lemma}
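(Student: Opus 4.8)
The plan is to mirror the proof of Lemma \ref{lem.kappacommtransf}, using the factorization $g^! = \Phi \circ j_*$ recorded just above the statement. First I would split the square in the statement into two vertically stacked squares by inserting the middle row
\[ \widetilde{\KO}_* (\Th (\nu)) \otimes \ism
   \stackrel{\kappa_*}{\longrightarrow}
   \widetilde{\syml}_* (\Th (\nu)) \otimes \ism, \]
so that the lower square is formed by the maps $j_*$ induced by $j:X^+ \to \Th (\nu)$ together with $\kappa_*$, and the upper square by the Thom homomorphisms $\Phi$ together with $\kappa_*$. It then suffices to check that each of these two squares commutes.

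The lower square commutes because both horizontal maps are the natural transformation of homology theories induced by the morphism of ring spectra $\kappa$, and hence commute with the homomorphism $j_*$ induced by the single fixed map of spaces $j : X^+ \to \Th (\nu)$. For the upper square I would invoke Lemma \ref{lem.thomcommkappa}. Although that lemma is phrased for the stable vertical normal disc block bundle of a block bundle, its proof uses nothing beyond the following formal facts, all of which hold equally for the rank $c$ normal bundle $\nu$ of the normally nonsingular inclusion $g$ (an oriented vector bundle, hence in particular an oriented PL microbundle): $\kappa_*$ commutes with the zero-section isomorphism $\rho_*$ by naturality; $\kappa$ respects cap products since it is a ring morphism; and $\kappa_* (\Delta (\nu)) = u_\syml (\nu)$ by Lemma \ref{lem.kappadeltaisranickiu}, because $\Delta (\nu) = \Delta_* (u_\SPL (\nu))$ and $u_\syml (\nu) = \sigma^* (u_\SPL (\nu))$. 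Therefore $\kappa_* \Phi = \Phi \kappa_*$, the upper square commutes, and stacking the two squares gives the claim.

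I do not expect a genuine obstacle here: this is a routine diagram chase parallel to Lemma \ref{lem.kappacommtransf}. The only point that merits a moment's care is confirming that Lemma \ref{lem.thomcommkappa} applies verbatim to the normal bundle of a normally nonsingular inclusion rather than to the normal block bundle of a block bundle; since the proof of that lemma is purely formal in $\nu$, nothing new arises, and one could alternatively restate and reprove it in one line in the present context.
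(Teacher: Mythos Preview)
Your proposal is correct and matches the paper's proof essentially verbatim: the paper also factors the square through the middle row $\widetilde{\KO}_* (\Th(\nu)) \to \widetilde{\syml}_* (\Th(\nu))$, cites naturality of $\kappa_*$ for the bottom square, and invokes Lemma \ref{lem.thomcommkappa} for the top square. Your additional remark that Lemma \ref{lem.thomcommkappa} applies to the present $\nu$ because its proof is purely formal is a useful clarification that the paper leaves implicit.
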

\begin{proof}
By construction of the restrictions $g^!$, the diagram
factors as
\[ \xymatrix{
 \KO_{*-c} (Y) \otimes \ism 
 \ar[r]^{\kappa_*}_\cong 
  & \syml_{*-c} (Y) \otimes \ism \\
\widetilde{\KO}_{*} (\Th (\nu)) \otimes \ism 
 \ar[u]^\Phi \ar[r]^{\kappa_*}_\cong 
  & \widetilde{\syml}_{*} (\Th (\nu)) \otimes \ism 
    \ar[u]_\Phi \\
\KO_{*} (X) \otimes \ism 
 \ar[u]^{j_*} \ar[r]^{\kappa_*}_\cong 
  & \syml_{*} (X) \otimes \ism. 
    \ar[u]_{j_*} 
} \]
The bottom square commutes, as $\kappa_*$ is a natural transformation
of homology theories, while the top square involving the
Thom homomorphisms commutes
by Lemma \ref{lem.thomcommkappa}.
\end{proof}

The Siegel-Sullivan orientation behaves under normally
nonsingular Gysin restrictions as follows.
\begin{thm} \label{thm.gysindeltaWitt}
Let $g:Y^{n-c} \hookrightarrow X^n$ be an oriented normally nonsingular inclusion
of closed Witt spaces. The $\KO [\smlhf]$-homology Gysin map $g^!$ of $g$
sends the Siegel-Sullivan orientation of $X$ to the Siegel-Sullivan
orientation of $Y$:
\[ g^! \Delta (X) = \Delta (Y). \]
\end{thm}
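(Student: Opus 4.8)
The plan is to mirror the structure of the proof of Theorem \ref{thm.transfdeltaWitt} exactly, substituting the Gysin machinery for the block bundle transfer. First I would recall that the $\syml$-theoretic analogue of the statement is available: in \cite{banaglnyjm} it is shown that for a normally nonsingular inclusion $g:Y\hookrightarrow X$ of closed Witt spaces, the $\syml(\rat)$-homology Gysin map sends the $\syml$-orientation $[X]_\syml$ to $[Y]_\syml$. As in Remark \ref{rem.phicommloc} and the proof of Theorem \ref{thm.transfdeltaWitt}, the $\syml$-theoretic Gysin restriction commutes with localization away from $2$ (localization is multiplicative on the spectrum level, hence compatible with the cap products and collapse maps defining $g^!$), so $g^! [X]_\syml = [Y]_\syml$ holds for the $\syml(\rat)[\smlhf]$-homology Gysin map as well.

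Next I would invoke Equation (\ref{equ.kappadeltaissymlor}), which gives $\kappa_* \Delta(X) = [X]_\syml$ and $\kappa_* \Delta(Y) = [Y]_\syml$ in $\syml(\rat)[\smlhf]$-homology. Combining this with Lemma \ref{lem.kappacommgysin}, which says $\kappa_*$ intertwines the $\KO[\smlhf]$- and $\syml[\smlhf]$-homology Gysin maps, the computation is
\[ \kappa_* g^! \Delta(X) = g^! \kappa_* \Delta(X) = g^! [X]_\syml = [Y]_\syml = \kappa_* \Delta(Y). \]
Since $\kappa_*$ is an isomorphism, $g^! \Delta(X) = \Delta(Y)$, which is the claim.

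I would include one small subtlety worth spelling out: the orientations of $\nu$ used on the two sides must be the ones that make Lemma \ref{lem.thomcommkappa} (hence Lemma \ref{lem.kappacommgysin}) applicable, namely $u = \Delta(\nu)$ on the $\KO[\smlhf]$ side and $u = u_\syml(\nu)$ on the $\syml[\smlhf]$ side, both arising from the common $\MSPL$-orientation $u_\SPL(\nu)$; the hypothesis that $g$ is oriented (and $\nu$ is $H\intg$-oriented compatibly with the orientations of $X$ and $Y$) guarantees that the Gysin map appearing in the statement is precisely the one built from this orientation. Granting that, the argument is essentially a diagram chase assembling the previously established lemmas.

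The main obstacle is not in this final assembly — once Lemma \ref{lem.kappacommgysin} and Equation (\ref{equ.kappadeltaissymlor}) are in hand, the proof is a two-line calculation. Rather, the real content is upstream: it lies in the $\syml(\rat)$-theoretic Gysin formula $g^![X]_\syml = [Y]_\syml$ imported from \cite{banaglnyjm}, which rests on the transversality and self-duality properties of intersection chain sheaves under normally nonsingular inclusions, and in Lemma \ref{lem.thomcommkappa}, whose proof uses that $\kappa$ is a morphism of ring spectra (hence respects cap products) and Lemma \ref{lem.kappadeltaisranickiu} identifying $\kappa_*\Delta(\nu)$ with $u_\syml(\nu)$. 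In the proof I would simply cite these and carry out the chase.
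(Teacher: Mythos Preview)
Your proposal is correct and follows essentially the same approach as the paper's own proof: cite the $\syml(\rat)$-theoretic Gysin formula from \cite{banaglnyjm}, use Remark \ref{rem.phicommloc} to pass to $\syml(\rat)[\smlhf]$, then combine Equation (\ref{equ.kappadeltaissymlor}) with Lemma \ref{lem.kappacommgysin} and the fact that $\kappa_*$ is an isomorphism. Your additional remarks on the orientation conventions and on where the substantive work lies are accurate and match the paper's setup.
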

\begin{proof}
The proof is analogous to the one of Theorem \ref{thm.transfdeltaWitt}.
Remark \ref{rem.phicommloc}
implies that Gysin restriction commutes with localization
away from $2$:
the diagram
\[ \xymatrix{
(\syml \rat)_{*-c} (Y) \ar[r]  & 
  (\syml \rat [\smlhf])_{*-c} (Y) \\
(\syml \rat)_* (X) \ar[u]^{g^!} \ar[r]
  & (\syml \rat [\smlhf])_* (X) \ar[u]_{g^!}
} \]
commutes.
In \cite[Theorem 3.17]{banaglnyjm} we showed that the left hand restriction sends
$[X]_\syml$ to $[Y]_\syml$.
Thus 
\[ g^! [X]_\syml = [Y]_\syml \]
for the right hand restriction as well.
By (\ref{equ.kappadeltaissymlor}),
\[ 
\kappa_* \Delta (X) = [X]_\syml \in (\syml \rat [\smlhf])_n (X),~ 
\kappa_* \Delta (Y) = [Y]_\syml \in (\syml \rat [\smlhf])_{n-c} (Y).
\]
Using Lemma \ref{lem.kappacommgysin},
\[ \kappa_* g^! \Delta (X)
  = g^! \kappa_* \Delta (X)
  = g^! [X]_\syml = [Y]_\syml
  = \kappa_* \Delta (Y). \]
It follows that $g^! \Delta (X) = \Delta (Y)$, as $\kappa_*$ is
an isomorphism.  
\end{proof}

In tandem, Theorems \ref{thm.gysindeltaWitt} and \ref{thm.transfdeltaWitt}
show that the transfer associated to a
normally nonsingular map $Y\to B$ (Goresky-MacPherson \cite[5.4.3]{gmih2}, 
Fulton-MacPherson \cite{fultonmacpherson})
sends $\Delta (B)$ to $\Delta (Y)$.

\end{document}